\title{\textbf{ \Large Connectedness of Lakshmibai-Seshadri path crystals for hyperbolic Kac-Moody algebras of rank 2}}
\author{Ryuta Hiasa\\
	{\small Graduate School of Pure and Applied Sciences, University of Tsukuba,}\\
	{\small 1-1-1 Tennodai, Tsukuba, Ibaraki 305-8571, Japan}\\
	{\small (e-mail: \texttt{hiasa@math.tsukuba.ac.jp})}
}
\date{}
\def \Z {{\mathbb Z}}
\def \R {{\mathbb R}}
\def \C {{\mathbb C}}
\def \B {{\mathbb B}}
\def \e {{\tilde{e}}}
\def \f {{\tilde{f}}}
\def \II {{I\hspace{-.1em}I}}
\def \derep {{\Delta_{\mathrm{re}}^+}}
\newtheorem{theorem}{Theorem}[section]%上の代わりに“\newtheorem{theorem}{定理}”にすると“定理１”のようになる
\newtheorem{proposition}[theorem]{Proposition}
\newtheorem{lemma}[theorem]{Lemma}
\newtheorem{corollary}[theorem]{Corollary}
\theoremstyle{definition}%この直後に定義する環境をstyle:definitionにする．
\newtheorem{definition}[theorem]{Definition}
\newtheorem{remark}[theorem]{Remark}
\numberwithin{equation}{section}%式番号を(1)から(1.1)に変える
\begin{document}
\maketitle
% \makeatother
\begin{abstract}
	Let $\mathfrak{g}$ be a hyperbolic Kac-Moody algebra of rank $2$.
	We give a necessary and sufficient condition for
	the crystal graph of the Lakshmibai-Seshadri path crystal $\B(\lambda)$
	to be connected
	for an arbitrary integral weight $\lambda$.
	%where $\lambda$ is an integral weight which is essentially neither dominant nor antidominant.
\end{abstract}

\begin{center}
{\footnotesize{keywords: Crystal, hyperbolic Kac–Moody algebras,
Lakshmibai–Seshadri paths

2010 Mathematics Subject Classification:
17B37, 17B67, 81R50}}
\end{center}

\setlength {\baselineskip}{16pt}
\section{Introduction.}
Let $\mathfrak{g}$ be a symmetrizable Kac-Moody algebra over $\C$ with $\mathfrak{h}$ the Cartan subalgebra.
We denote by $W$ the Weyl group of $\mathfrak{g}$. Let $P$ be an integral weight lattice of $\mathfrak{g}$,
and $P^+$ (resp., $-P^+$) the set of dominant (resp., antidominant) integral weights in $P$.
In \cite{L2} and \cite{L},
Littelmann introduced the notion of Lakshmibai-Seshadri (LS for short) paths of shape
$\lambda \in P$, and gave the set $\B(\lambda)$ of all LS paths of shape $\lambda$ a crystal structure.
Kashiwara \cite{Ksim} and Joseph \cite{J} proved independently that
if $\lambda \in P^+$ (resp., $\lambda \in -P^+$),
then $\B(\lambda)$ is isomorphic to the
crystal basis $\mathcal{B}(\lambda)$
of the integrable highest (resp., lowest) weight module
of highest (resp., lowest) weight $\lambda$.
Also, we see by the definition of an LS path
that $\mathbb{B}(\lambda)=\mathbb{B}(w\lambda)$
for $\lambda \in P$ and $w \in W$.
Hence, if $W\lambda  \cap (P^+ \cup -P^+) \neq \emptyset$, then $\mathbb{B}(\lambda)$
is isomorphic to the crystal basis of an integrable highest (or lowest) weight module.
So, we are interested in the case that
\begin{equation}\label{int.A}
	W\lambda  \cap (P^+ \cup -P^+) = \emptyset. \tag{$*$}
\end{equation}

If $\mathfrak{g}$ is of finite type,
then $W\lambda  \cap P^+ \neq \emptyset$ for any $\lambda \in P$.
Assume that $\mathfrak{g}$ is of affine type.
% and let $c\in \mathfrak{h}$  be the canonical central element of $\mathfrak{g}$.
Then,
$W\lambda  \cap (P^+ \cup -P^+) = \emptyset$
if and only
if ($\lambda \neq 0$, and) the level of $\lambda$ is $0$.
%$\langle \lambda, c \rangle = 0$.
%
Naito and Sagaki proved in \cite{NS1} and \cite{NS2} that
if $\lambda$ is a positive integer multiple of a level-zero fundamental
weight, then $\B(\lambda)$ is isomorphic to the
crystal basis $\mathcal{B}(\lambda)$ of the
extremal weight module $V(\lambda)$ introduced in \cite{K}.
After that,
Ishii, Naito, and Sagaki \cite{INS}
introduced semi-infinite LS paths of shape $\lambda$,
for a level-zero dominant integrable weight $\lambda$,
and proved that
the crystal $\B^{\frac{\infty}{2}}(\lambda)$ of
semi-infinite LS paths of shape $\lambda$ is isomorphic to
the crystal basis $\mathcal{B}(\lambda)$ of the
extremal weight module $V(\lambda)$.
For the case that $\mathfrak{g}$ is of  indefinite type,
there are few studies on the crystal structure of
$\B(\lambda)$ (for $\lambda \in P$ satisfying the condition \eqref{int.A})
and its relationship to the representation theory.
Yu \cite{Yu} and Sagaki-Yu \cite{S-Yu} studied them in the special case that
$\mathfrak{g}=\mathfrak{g}(A)$
is the
hyperbolic Kac-Moody algebra of rank $2$ over $\C$ associated to
a generalized Cartan matrix
\begin{equation*}
	A=
	\begin{pmatrix}
		2 & -a \\-b & 2\\
	\end{pmatrix}, \quad
	\mathrm{where} \
	a, b \in \mathbb{Z}_{\geq 2} \
	\mathrm{with} \
	ab>4,
\end{equation*}
% $\mathfrak{g}$ is of rank $2$,
and $\lambda=\Lambda_1-\Lambda_2$,
where $\Lambda_1, \Lambda_2$ are the fundamental weights.
Namely, Yu \cite{Yu} proved that $\lambda=\Lambda_1-\Lambda_2$
satisfies the condition \eqref{int.A} and the crystal graph of $\B(\Lambda_1-\Lambda_2)$ is connected.
Then,
Sagaki and Yu \cite{S-Yu} proved that $\B(\Lambda_1-\Lambda_2)$ is isomorphic to  the crystal basis $\mathcal{B}(\Lambda_1-\Lambda_2)$ of the extremal weight module $V(\Lambda_1-\Lambda_2)$
of extremal weight $\Lambda_1-\Lambda_2$.

In this paper, as a further study after \cite{Yu},
we study the integral weights $\lambda \in P$ satisfying \eqref{int.A},
and the crystal structure of $\B(\lambda)$ (for $\lambda \in P$ satisfying \eqref{int.A})
in the case that $\mathfrak{g}=\mathfrak{g}(A)$
is the hyperbolic Kac-Moody algebra of rank $2$ as above.
% \begin{equation*}
% 	A=
% 	\begin{pmatrix}
% 		2 & -a \\-b & 2\\
% 	\end{pmatrix}, \quad
% 	\mathrm{where} \
% 	a, b \in \mathbb{Z}_{\geq 2} \
% 	\mathrm{with} \
% 	ab>4.
% \end{equation*}
First, we give a necessary and sufficient condition for
a Weyl group orbit in $P$ to satisfy the condition \eqref{int.A}.
Let $\mathbb{O}:=\{ W\lambda \mid \lambda \in P \}$
be the set of $W$-orbits in $P$.
\begin{theorem}[$=$ Theorem \ref{thm.M}]\label{int.thm1}
	A Weyl group orbit $O\in \mathbb{O}$ satisfies the condition \eqref{int.A} if and only if
	$O$ contains an integral weight of the form either
	$(\mathrm{i})$ or $(\mathrm{ii})$$\mathrm{:}$
	\begin{itemize}
		\item[$(\mathrm{i})$] $k\Lambda_1-l\Lambda_2$ for some
		$k, l \in \Z_{>0}$ such that $l\leq k<(a-1)l $$\mathrm{;}$
		\item[$(\mathrm{ii})$] $k\Lambda_1-l\Lambda_2$ for some
		$k, l \in \Z_{>0}$ such that $k<l\leq (b-1)k $.
	\end{itemize}
\end{theorem}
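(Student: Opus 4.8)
The plan is to translate the statement into the explicit combinatorics of the $W$-action on the coordinate vector $(m_1,m_2):=(\langle\lambda,\alpha_1^\vee\rangle,\langle\lambda,\alpha_2^\vee\rangle)$ of $\lambda=m_1\Lambda_1+m_2\Lambda_2$. From the Cartan matrix one reads off $s_1\cdot(m_1,m_2)=(-m_1,\,m_2+bm_1)$ and $s_2\cdot(m_1,m_2)=(m_1+am_2,\,-m_2)$. In these coordinates $P^+$ is the closed first quadrant $\{m_1,m_2\ge 0\}$ and $-P^+$ the third quadrant, so condition \eqref{int.A} says exactly that the orbit of $(m_1,m_2)$ avoids the union of the first and third quadrants. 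The weights in (i) and (ii) are precisely the fourth-quadrant points $(k,-l)$, $k,l>0$, whose slope $-l/k$ lies in the half-open interval $[-(b-1),-\tfrac1{a-1})$, with (i) giving the subrange $[-1,-\tfrac1{a-1})$ and (ii) the subrange $[-(b-1),-1)$. I will prove the two implications separately, using throughout the index symmetry $(1,2,a,b,m_1,m_2)\mapsto(2,1,b,a,m_2,m_1)$, which interchanges (i) and (ii).

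For the ``if'' direction I would introduce the $W$-invariant quadratic form coming from the symmetrization $\mathrm{diag}(b,a)\cdot A$, namely $Q(m_1,m_2):=b\,m_1^2+ab\,m_1m_2+a\,m_2^2$; a one-line check on the two generators shows $Q$ is constant along $W$-orbits. On the first and third quadrants all three monomials are nonnegative, so $Q\ge 0$ there, whereas $Q<0$ on every weight of the form (i) or (ii). For (i) this follows from convexity: fixing $l$ and viewing $k\mapsto Q(k,-l)=bk^2-abkl+al^2$ as an upward parabola, both endpoints $k=l$ and $k=(a-1)l$ yield the same value $(a+b-ab)l^2$, which is negative because $ab-a-b=(a-1)(b-1)-1>0$ when $ab>4$; hence $Q(k,-l)<0$ on the whole range $l\le k\le(a-1)l$, and case (ii) follows by the index symmetry. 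Since $Q$ is $W$-invariant and strictly negative at such a weight, the entire orbit lies in $\{Q<0\}$, which is disjoint from $\{Q\ge 0\}$ and in particular from the first and third quadrants; this is \eqref{int.A}.

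For the ``only if'' direction, assume the orbit avoids the first and third quadrants. First I would bring a representative into the fourth quadrant: the set $\{Q<0\}$ has two connected components, one in the second and one in the fourth quadrant, and $s_1$ interchanges them (its $(-1)$-eigendirection is $(2,-b)$, which lies in the fourth quadrant with $Q(2,-b)=b(4-ab)<0$ and is sent by $s_1$ to $(-2,b)$ in the second quadrant). Given a representative $(k,-l)$ with $k,l>0$, the hypothesis forces $k<al$ and $l<bk$, for otherwise $s_2$ or $s_1$ would land in the first or third quadrant. I would then act by the hyperbolic element $g:=s_1s_2$, with $\det g=1$ and $\operatorname{tr}g=ab-2>2$. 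The key computation is $g\cdot\big((a-1)\Lambda_1-\Lambda_2\big)=\Lambda_1-(b-1)\Lambda_2$, which identifies the slope interval $[-(b-1),-\tfrac1{a-1})$ of (i)$\cup$(ii) as a fundamental domain for $\langle g\rangle$ acting on the fourth-quadrant component of $\{Q<0\}$. One checks that $g$ (resp.\ $g^{-1}$) keeps the representative in the fourth quadrant and strictly decreases the coordinate $k$ (resp.\ $l$) whenever the slope is still shallower than $-\tfrac1{a-1}$ (resp.\ steeper than $-(b-1)$); since these are positive integers the reduction terminates, and because the interval is a fundamental domain the terminal representative satisfies (i) or (ii).

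The conceptual backbone---$W$-invariance of $Q$ and the splitting of $\{Q<0\}$ into its second- and fourth-quadrant pieces---is routine, and the ``if'' direction is short. The step I expect to be the main obstacle is the forward reduction: proving that iterating $g^{\pm1}$ terminates and lands \emph{exactly} in the range with the sharp constants $a-1$ and $b-1$, rather than the naive $a$ and $b$ produced by a single reflection. This requires a careful analysis of the action of $g$ near the two boundary slopes, together with separate treatment of the degenerate cases---e.g.\ $a=2$, where (i) is empty and the weight of slope $-1$ must be carried into (ii) by a single application of $g$, and lattice points lying exactly on the boundary slopes.
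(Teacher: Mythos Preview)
Your approach is sound and genuinely different from the paper's. For the ``if'' direction you exploit the $W$-invariant form $Q(m_1,m_2)=bm_1^2+abm_1m_2+am_2^2$ and a sign argument; this is slicker than the paper, which instead encodes $W\lambda$ as a two-sided integer sequence $\{p_m\}_{m\in\Z}$ (so that $x_m\lambda=\pm(p_{m+1}\Lambda_1-p_m\Lambda_2)$), proves that \eqref{int.A} is equivalent to all $p_m$ having the same sign, and then verifies positivity by a monotonicity lemma starting from a weight of shape (i) or (ii). For the ``only if'' direction the two arguments are the same descent in different clothing: the paper simply observes that a sequence of positive integers must have a local minimum $p_{n-1}>p_n<p_{n+1}$, and a one-line recurrence check then shows that $x_n\lambda$ or $x_{n-1}\lambda$ lies in the desired range---this is exactly the termination you flag as the main obstacle, and it bypasses any dynamical analysis near the boundary slopes. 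Your acknowledged edge cases ($a=2$, slope $-1/(a-1)$) are precisely those the paper isolates via an auxiliary list (i$'$)--(iv$'$) in which the strict and non-strict inequalities are separated. One caution: your invocation of the component structure of $\{Q<0\}$ in the forward direction is not yet justified---that \eqref{int.A} forces $Q(\lambda)<0$ is a Tits-cone statement you have not proved---and it is in fact unnecessary: both the passage from the second to the fourth quadrant and the $g^{\pm1}$-iteration within the fourth quadrant follow directly from \eqref{int.A}, since any escape from the open fourth quadrant would land in $P^+\cup(-P^+)$.
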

% \noindent
Next, we study the connectedness of the crystal graph of $\B(\lambda)$
for $\lambda \in P$ satisfying the condition \eqref{int.A},
which will play an important role in the future study on the relationship between
$\B(\lambda)$ and the crystal basis of an integrable module,
such as an extremal weight module.

\begin{theorem}[$=$ Theorem \ref{cor.main}]\label{int.thm2}
	Assume that  $\lambda \in P $  satisfies the condition \eqref{int.A}.
	% $(\mathrm{i'})$ or $(\mathrm{ii'})$ in $\mathrm{Theorem}$ $\ref{int.thm1}$.
	The crystal graph of $\B(\lambda)$ is connected if and only if
	$W\lambda$
	contains an integral weight $\lambda'$ of the form$\mathrm{:}$
	$\lambda'=k'\Lambda_1-l'\Lambda_2$ with
	$|k'|=1$ or $|l'|=1$.
	Otherwise, the crystal graph of  $\B(\lambda)$ has infinitely many connected
	components.
\end{theorem}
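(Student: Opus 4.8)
The plan is to reduce to a canonical representative and then split the two implications. Since $\B(\lambda)=\B(w\lambda)$ for $w\in W$, I may replace $\lambda$ by any element of its orbit; by Theorem \ref{thm.M} I take it of the form $(\mathrm{i})$ or $(\mathrm{ii})$, and since interchanging the indices $1\leftrightarrow 2$ interchanges $a\leftrightarrow b$ together with the two forms, it suffices to treat case $(\mathrm{i})$: $\lambda=k\Lambda_1-l\Lambda_2$ with $l\le k<(a-1)l$. The first step is a short lemma computing the orbit explicitly: writing weights in the basis $(\Lambda_1,\Lambda_2)$ one has $s_1(p,q)=(-p,\,q+bp)$ and $s_2(p,q)=(p+aq,\,-q)$, and starting from $\mu_0:=(k,-l)$ the orbit is a bi-infinite chain $(\mu_n)_{n\in\Z}$ whose coordinate signs alternate between $(+,-)$ and $(-,+)$ and whose entries grow in absolute value as $|n|\to\infty$. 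From this I read off that the smallest absolute value of a coordinate occurring in $W\lambda$ equals $l$, so $W\lambda$ contains a weight with a coordinate of absolute value $1$ if and only if $l=1$. Thus the theorem reduces to: $\B(\lambda)$ is connected if and only if $l=1$.

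Next I set up the combinatorial model. The intrinsic order on $W\lambda$ used to define LS paths makes $W\lambda$ a \emph{totally} ordered chain $\cdots>\mu_{-1}>\mu_0>\mu_1>\cdots$ — this is special to the non-dominant case, where at each weight exactly one of $\langle\mu_n,\alpha_1^\vee\rangle,\langle\mu_n,\alpha_2^\vee\rangle$ is positive, giving a unique down-step (in the dominant case the order branches instead). On this chain $s_1$ and $s_2$ act as the two families of adjacent transpositions, and the covering pairing between $\mu_n$ and $\mu_{n+1}$ is a positive integer $c_n$, the two central values being $c_{-1}=l$ and $c_0=k$. I then record the crucial elementary fact that all straight-line paths $\pi_{\mu_n}$ lie in a single connected component $\mathcal C_0$: the Kashiwara--Littelmann Weyl-group action $S_i$ satisfies $S_i\pi_{\mu_n}=\pi_{s_i\mu_n}=\pi_{\mu_{n\pm1}}$, and $S_i$ is a composite of the $\e_i,\f_i$. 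Hence the theorem becomes: $\B(\lambda)=\mathcal C_0$ if and only if $l=1$. Heuristically $\mathcal C_0$ is the crystal of the extremal weight module, which is always connected; the content is whether LS paths produce anything beyond it.

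For the direction $l=1\Rightarrow$ connected, the plan is to prove that every LS path can be ``unbent'' to a straight-line path by root operators, by induction on the number $s$ of segments: whenever $s\ge 2$ some $\e_i$ or $\f_i$ strictly decreases $s$ while remaining inside $\B(\lambda)$, the engine being the explicit rule for how $\e_i,\f_i$ act on the initial and final directions and on the breakpoints. The place where $l=1$ enters is that the central covering has pairing $c_{-1}=1$, which admits no interior breakpoint, so no ``short'' bend can be parked across the central junction; combined with the growth of the $c_n$ this is designed to force every admissible breakpoint to be removable, so the induction terminates at a single direction, landing in $\mathcal C_0$.

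The reverse direction, $l\ge2\Rightarrow$ infinitely many components, is where I expect the main difficulty, and it is genuinely non-linear. The naive weight invariant is useless: root operators change the weight by $\pm\alpha_i$, so any invariant factoring through $P/Q$ is constant on components, but $[P:Q]=ab-4$ is finite, whence this detects only finitely many components; moreover one checks that \emph{every} component is both sourceless and sinkless (no LS path of this shape can remain in the dominant or antidominant cone, since every direction $\mu_n$ has a coordinate of each sign), so there are no extremal elements to count either. The plan is therefore to construct, using the interior breakpoints made available by $c_{-1}=l\ge 2$ and their $W$-translates at the junctions $\mu_{2m-1}\,|\,\mu_{2m}$, an explicit $\Z$-indexed family of bent LS paths, and to exhibit a genuinely combinatorial quantity — extracted from the sequence of directions together with the breakpoints and tracked through the piecewise-linear action of $\e_i,\f_i$ — that is preserved by all root operators and takes infinitely many values on this family. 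Verifying that this quantity is an invariant (equivalently, that the constructed paths lie in pairwise distinct components and off $\mathcal C_0$) is the crux, since it requires controlling the global effect of the root operators on a path of arbitrarily many segments, not merely their local effect on the extremal directions.
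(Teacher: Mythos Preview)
Your reduction to a canonical representative and the observation that, in form $(\mathrm{i})$, the condition ``some coordinate is $\pm 1$'' is equivalent to $l=1$ are both correct and match the paper's setup. The forward direction (connectedness when $l=1$) is close in spirit, but your proposed induction is not right: it is \emph{not} true that some single $\e_i$ or $\f_i$ always strictly decreases the segment count. The paper instead inducts on the index $m$ of the initial direction $\iota(\pi)=x_m\lambda$ (and dually on $\kappa(\pi)$), using $\iota(\e_i^{\max}\pi)=r_i\iota(\pi)=x_{m-1}\lambda$ from Lemma~\ref{IK}. What $l=1$ buys is exactly your ``no bend across the central junction'': formally, no LS path can have directions $x_m\lambda$ and $x_{m'}\lambda$ with $m\ge 0>m'$ (Proposition~\ref{LS2}), so this induction on $m$ terminates at $\pi_\lambda$.

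The real gap is the disconnected direction. You have a plan to construct an invariant, not an invariant; and the paper's argument shows why this is delicate: it splits essentially on whether $\gcd(k,l)=1$, a distinction you have not made. When $k,l$ are coprime, every LS path has consecutive directions differing by exactly one step (Lemma~\ref{lem.prime}), and the paper builds an auxiliary integer sequence $\{q_m\}$ with $0<q_m<p_m$ and $q_mp_{m+1}-q_{m+1}p_m$ constant, defines a condition $C(m,n)$ on paths (a block of $2n$ consecutive breakpoints equal to $q_j/p_j$), and proves $C(m,n)$ is preserved by all root operators up to shifting $m$ (Proposition~\ref{thm.cmn}); the parameter $n$ is the invariant, and $\ell(\pi)\ge 2n+1$ separates the components. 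When $\gcd(k,l)=d\ge 2$, LS paths behave differently --- consecutive directions can jump by more than one step --- and a completely separate argument is used: the paper reduces to $d=2$ via the embedding $\B(d\lambda')\hookrightarrow \B(\lambda')^{\otimes d}$, and then the size of the unique long jump becomes the invariant (Theorem~\ref{thm.Om}). Neither the case split nor either invariant appears in your proposal, and without them the argument cannot be completed.
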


This paper is organized as follows.
In Section 2, we fix our notation, and recall
the definitions and several properties of LS paths.
In Section 3, we give a proof of Theorem \ref{int.thm1}.
In Section 4, we give a proof of Theorem \ref{int.thm2} by dividing it into three parts.

\section{Preliminaries.}

\subsection{Hyperbolic Kac-Moody algebra of rank $2$.}
% Throughout this paper except the appendix,
Let $A$ be a hyperbolic generalized Cartan matrix of the form
\begin{equation}\label{eq.gcm}
	A=
	\begin{pmatrix}
		2 & -a \\-b & 2\\
	\end{pmatrix}, \quad
	\mathrm{where} \
	a, b \in \mathbb{Z}_{\geq 2} \
	\mathrm{with} \
	ab>4.
\end{equation}
Let $\mathfrak{g}= \mathfrak{g}(A)$ be the Kac-Moody algebra
associated to $A$ over $\C$.
We denote by $\mathfrak{h}$ the Cartan subalgebra
of $\mathfrak{g}$,
$\{ \alpha_{i} \}_{i \in I} \subset \mathfrak{h^*}$
the set of simple roots,
and $\{ \alpha_i^\vee \}_{i \in I} \subset \mathfrak{h}$
the set of simple coroots,
where $I=\{ 1, 2\}$.
Let  $r_i$ be the simple reflection with respect to  $\alpha_i$
for $i \in I$,  and let
$W=\langle r_i \mid i \in I \rangle$
be the Weyl group of $\mathfrak{g}$.
Note that $W=\{ x_m \mid m \in \Z \}$, where
\begin{equation}\label{xm}
x_{m}:=
	\begin{cases}
		(r_2 r_1)^k & \mathrm{if} \ m=2k \ \mathrm{with} \ k \in \Z_{\ge0}, \\
		r_1(r_2 r_1)^k & \mathrm{if} \ m=2k+1 \ \mathrm{with} \ k \in \Z_{\ge0}, \\
		(r_1 r_2)^{-k} & \mathrm{if} \ m=2k \ \mathrm{with} \ k \in \Z_{\le0}, \\
		r_2(r_1 r_2)^{-k} & \mathrm{if} \ m=2k-1 \ \mathrm{with} \ k \in \Z_{\le0}. \\
	\end{cases}
\end{equation}
Let $\Delta_{\mathrm{re}}^+$ denote the set of positive real roots.
For a positive real root $\beta \in \Delta_{\mathrm{re}}^+$,
we denote by $\beta^{\vee}$ the dual root of $\beta$,
and by $r_\beta \in W $ the reflection with respect to  $\beta$.
Let
$\{ \Lambda_i \}_{i \in I} \subset \mathfrak{h^*}$ be the fundamental weights for $\mathfrak{g}$,
i.e., $\langle \Lambda_i , \alpha_j^\vee \rangle=\delta_{i, j}$ for $i, j \in I$,
{
where $\langle \cdot , \cdot \rangle : \mathfrak{h^*} \times \mathfrak{h
}\rightarrow \C$
is the canonical pairing of $\mathfrak{h^*}$ and $\mathfrak{h}$}.
We set $P:=\Z \Lambda_1 \oplus \Z \Lambda _2$;
{note that $\alpha_1=2\Lambda_1-b\Lambda_2$ and $\alpha_2=-a\Lambda_1+2\Lambda_2$}.
Let
$P^+=\Z_{\geq 0} \Lambda_1 +\Z_{\geq 0} \Lambda _2 \subset P$
be the set of dominant integral weights,
and $-P^+$ the set of antidominant integral weights.
%We set $P^\vee=\Z \alpha_1^\vee \oplus \Z \alpha_2^\vee$ be dual weight lattice.

\subsection{Lakshmibai-Seshadri paths.}\label{section.LSpath}
Let us recall the definition of a Lakshmibai-Seshadri path
from \cite[\S 2, \S 4]{L}.
In this subsection,
we fix an integral weight  $\lambda \in P$.

\begin{definition}\label{order}
For $\mu, \nu \in W\lambda$,
we write $\mu \geq \nu $
if there exist a sequence
$\mu=\mu_0, \mu_1, \ldots$, $\mu_u=\nu$
of elements in
$W\lambda$
and a sequence $\beta_1, \beta_2, \ldots , \beta_u$
of positive real roots such that
$\mu_k=r_{\beta_k}(\mu_{k-1})$ and
$\langle \mu_{k-1}, \beta^{\vee}_k \rangle < 0$
for each $k=1,2, \ldots, u$.
If $\mu \geq \nu$, then we define $\mathrm{dist}(\mu, \nu)$
to be the maximal length $u$ of all possible such sequences
$\mu=\mu_0, \mu_1, \ldots, \mu_u=\nu$.
\end{definition}

\begin{remark}\label{rem.1}
For $\mu, \nu \in W\lambda$ such that
$\mu > \nu$ and $\mathrm{dist}(\mu, \nu)=1$,
there exists a unique positive real root $\beta \in \derep$ such that
$\nu=r_{\beta}(\mu)$.
\end{remark}

{The Hasse diagram of $W\lambda$ is, by definition, the $\derep$-labeled,
directed graph with vertex set $W\lambda$,
and edges of the following form:
$\mu \xleftarrow{\beta} \nu$
for $\mu, \nu \in W\lambda$ and
$\beta \in \derep$
such that $\mu >\nu $ with $\mathrm{dist}(\mu, \nu)=1$ and $\nu=r_{\beta}(\mu)$.}

\begin{definition}
Let $\mu, \nu \in W\lambda$ with $\mu > \nu $,
and let $0<\sigma<1$ be a rational number.
A $\sigma$-chain for $(\mu, \nu)$ is a sequence
$\mu = \mu_0, \ldots, \mu_u=\nu$
of elements of $W\lambda $ such that
$\mathrm{dist}(\mu_{k-1}, \mu_k)=1$
and $ \sigma \langle \mu_{k-1},
\beta_k^\vee \rangle \in \mathbb{Z}_{<0}$
for all $k=1,2,\ldots, u$,
where $\beta_k$ is the unique positive real root satisfying
$\mu_k=r_{\beta_k}(\mu_{k-1})$.
\end{definition}

\begin{definition}
Let $\mu_1> \cdots> \mu_u$ be a finite sequence of elements in $W\lambda$,
and let $0=\sigma_0<\cdots<\sigma_u=1$ be a finite sequence of rational numbers.
The pair
$\pi=(\mu_1,\ldots,\mu_u; \sigma_0, \ldots, \sigma_u)$
is called a Lakshmibai-Seshadri (LS for short) path of shape $\lambda$
if there exists
a $\sigma_k$-chain for $(\mu_k,\mu_{k+1})$
for each $k=1, \ldots, u-1$.
We denote by $\mathbb{B}(\lambda)$
the set of LS paths of shape $\lambda$.
\end{definition}

We identify
$\pi=(\mu_1,\ldots,\mu_u;\sigma_0, \ldots , \sigma_u) \in \mathbb{B}(\lambda)$ with the following piecewise-linear continuous map
$\pi:[0,1]\to \R \otimes_{\Z} P $, where $[0,1]:= \{ t\in \R \mid 0 \leq t \leq 1 \}$:
\begin{equation*}
	\pi(t)=\sum^{j-1}_{k=1}(\sigma_k-\sigma_{k-1})\mu_k+
	(t-\sigma_{j-1})\mu_j  \ \text{for} \
	\sigma_{j-1}\le t\le\sigma_j, \  1\le j\le u.
\end{equation*}

Now, we endow $\mathbb{B}(\lambda)$ with
a crystal structure as follows;
for the definition of a crystal, see, e.g., \cite{HK}.
First, we define $\mathrm{wt}(\pi):=\pi(1)$
for $\pi \in \mathbb{B}(\lambda) $;
we know from \cite[Lemma 4.5 (a)]{L} that $\pi(1)\in P$.
Next, for $\pi \in \mathbb{B}(\lambda) $ and $i \in I$,
we define
\begin{equation}
	H^\pi_i(t):=\langle\pi(t),\alpha_i^{\vee}\rangle \
	\text{for} \  0 \leq t \leq1,
\end{equation}
\begin{equation}\label{eq.min}
	m^\pi_i:=\mathrm{min}\{H^\pi_i(t) \mid 0\leq t \leq 1\}.
\end{equation}
From \cite[Lemma 4.5 (d)]{L}, we know that
\begin{equation}\label{int}
	\text{all local minimal values of} \ H^\pi_i(t) \
	\text{are integers};
\end{equation}
in particular, $m_i^\pi \in \mathbb{Z}_{\leq0}$
%is a nonpositive integer,
and $H^\pi_{i}(1)-m_i^\pi \in \mathbb{Z}_{\geq0}$.
%is a nonnegative integer.
We define $\e_i \pi$ as follows:
If $m^\pi_i=0$, then we set $\e_i\pi:=\mathbf{0}$,
where $\mathbf{0}$ is an extra element not contained in any crystal.
If $m^\pi_i\le-1$, then we set
\begin{align}
t_1&:=\mathrm{min}\{t\in[0,1] \mid H^\pi_i(t)=m^\pi_i\}, \label{et_1} \\
t_0&:=\mathrm{max}\{t\in[0,t_1] \mid H^\pi_i(t)=m^\pi_i+1\}; \label{et_0}
\end{align}
we see by (\ref{int}) that
$H^\pi_i(t)$ is strictly decreasing on $[t_0, t_1]$.
We define
\begin{equation}
	(\e_i\pi)(t):=
	\begin{cases}
		\pi(t)
			& \text{if} \ 0\le t\le t_0, \\
		r_i(\pi(t)-\pi(t_0))+\pi(t_0)
			& \text{if} \ t_0\le t\le t_1,\\
		\pi(t)+\alpha_i
			& \text{if} \ t_1\le t \leq 1;
	\end{cases}
\end{equation}
we know from
\cite[$\S 4$]{L} that $\e_i \pi \in \mathbb{B}(\lambda)$.
Similarly, we define
$ \f_i \pi$ as follows:
If $H^\pi_i(1)-m_i^\pi=0$, then we set $\f_i\pi:=\mathbf{0}$.
If $H^\pi_i(1)-m_i^\pi \geq 1$, then  we set
\begin{align}
	t_0 &:=\mathrm{max}\{t\in[0,1]
		\mid 	H^\pi_i(t)=m^\pi_i\}, \label{ft_0} \\
	t_1&:=\mathrm{min}\{t\in[t_0,1] \mid 		H^\pi_i(t)=m^\pi_i+1\}; \label{ft_1}
\end{align}
we see {by} (\ref{int}) that
$H^\pi_i(t)$ is strictly increasing on $[t_0, t_1]$.
We define
\begin{equation}
	(\f_i\pi)(t):=
	\begin{cases}
		\pi(t)
			& \text{if }  0\leq t\leq t_0 , \\
		r_i(\pi(t)-\pi(t_0))+\pi(t_0)
			& \text{if }  t_0 \leq t\leq t_1, \\
		\pi(t)-\alpha_i
			& \text{if } t_1\leq t \leq1;
	\end{cases}
\end{equation}
we know from \cite[$\S 4$]{L}
that $f_i \pi \in \mathbb{B}(\lambda)$.
We set $\e_i\mathbf{0}= \f_i\mathbf{0} := \mathbf{0}$ for
$i \in I $.
%
% \begin{lemma}[{\cite[Lemma 2.1 (c)]{L}}]\label{Lit2.1}
% 	For $\pi \in\mathbb{B(\lambda)}$ and $i \in I$,
% 	$\e_i^k \pi \neq \mathbf{0}$
% 	if and only if $0 \leq k \leq -m^\pi_i$;
% 	$f_i^k \pi \neq \mathbf{0}$
% 	is if and only if $0 \leq k \leq H^\pi_i(1)-m_i^\pi $.
% \end{lemma}
Finally,
for $\pi \in \mathbb{B}(\lambda)$ and $i \in I$,
we set
\begin{equation*}
	\varepsilon_i (\pi)
	:=\mathrm{max}\{k\in\mathbb{Z}_{\geq 0}
	\mid \e^k_i \pi \neq\mathbf{0}\}, \quad
	\varphi_i (\pi) :=\mathrm{max}\{k\in\mathbb{Z}_{\geq 0}
	\mid \f^k_i \pi \neq\mathbf{0}\}.
\end{equation*}
We know from {\cite[Lemma 2.1 (c)]{L}} that
$\varepsilon_i(\pi)=-m^\pi_i$ and
$\varphi_i (\pi)=H^\pi_i(1)-m_i^\pi$.
\begin{theorem}[{\cite[\S 2, \S 4]{L}}]
The set $\mathbb{B}(\lambda)$,
together with the maps
$\mathrm{wt}: \mathbb{B}(\lambda)  \to P$,
$\e_i, \f_i: \mathbb{B}(\lambda) \to
	\mathbb{B}(\lambda) \cup \{ \mathbf{0} \}$, $i\in I$,
and $\varepsilon_i, \varphi_i : \mathbb{B}(\lambda) \to
	 \Z_{\geq0}$, $i \in I$, {is} a crystal.
\end{theorem}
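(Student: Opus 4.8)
The statement asserts that the given data $(\mathrm{wt}, \e_i, \f_i, \varepsilon_i, \varphi_i)$ make $B := \B(\lambda)$ a crystal in the sense of \cite{HK}; concretely, I would verify, for all $i \in I$ and $\pi \in B$, the usual axioms: (C1) $\varphi_i(\pi) = \varepsilon_i(\pi) + \langle \mathrm{wt}(\pi), \alpha_i^\vee\rangle$; (C2) if $\e_i\pi \neq \mathbf{0}$ then $\mathrm{wt}(\e_i\pi) = \mathrm{wt}(\pi)+\alpha_i$, $\varepsilon_i(\e_i\pi) = \varepsilon_i(\pi)-1$, and $\varphi_i(\e_i\pi) = \varphi_i(\pi)+1$; (C3) the analogous relations for $\f_i$ with the three quantities shifted in the opposite directions; and (C4) for $\pi, \pi' \in B$, $\pi' = \e_i\pi \iff \pi = \f_i\pi'$. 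The remaining axiom is vacuous here, since $\varphi_i$ takes values in $\Z_{\geq 0}$ and never equals $-\infty$. That $\e_i\pi, \f_i\pi \in B \cup \{\mathbf{0}\}$, and that all local minima of $H_i^\pi$ are integers, are already granted from \cite{L} in the construction above, so I may use \eqref{int} freely.

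Because $\varepsilon_i(\pi) = -m_i^\pi$ and $\varphi_i(\pi) = H_i^\pi(1) - m_i^\pi$, axiom (C1) is immediate: $\varphi_i(\pi) - \varepsilon_i(\pi) = H_i^\pi(1) = \langle \pi(1), \alpha_i^\vee\rangle = \langle \mathrm{wt}(\pi), \alpha_i^\vee\rangle$. For (C2) and (C3) the plan is to track the piecewise-linear function $H_i^\pi(t)$ under the operators. Using $\langle r_i\mu, \alpha_i^\vee\rangle = -\langle \mu, \alpha_i^\vee\rangle$ and $\langle \alpha_i, \alpha_i^\vee\rangle = 2$, a direct computation with the three-piece formula for $\e_i\pi$ shows that $H_i^{\e_i\pi}$ agrees with $H_i^\pi$ on $[0,t_0]$, equals $-H_i^\pi(t) + 2(m_i^\pi+1)$ on $[t_0,t_1]$ (the reflection of $H_i^\pi$ about the line $y = m_i^\pi+1$), and equals $H_i^\pi(t)+2$ on $[t_1,1]$, where $t_0, t_1$ are as in \eqref{et_0}, \eqref{et_1}. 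From this I read off $\mathrm{wt}(\e_i\pi) = \pi(1)+\alpha_i$, and — using that $t_1$ is the first time $H_i^\pi$ attains its global minimum $m_i^\pi$, so that $H_i^\pi \geq m_i^\pi+1$ on $[0,t_0]$ by \eqref{int} — that $m_i^{\e_i\pi} = m_i^\pi + 1$ and $H_i^{\e_i\pi}(1) = H_i^\pi(1)+2$. These three facts give exactly the equalities of (C2); the computation for $\f_i$, hence (C3), is entirely symmetric.

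The main obstacle is (C4), the mutual-inverse property, which I would prove by showing $\f_i\e_i\pi = \pi$ whenever $\e_i\pi \neq \mathbf{0}$ (and symmetrically). Setting $\pi' = \e_i\pi$, the key point is that the cut points $t_0', t_1'$ selected by \eqref{ft_0}, \eqref{ft_1} in forming $\f_i\pi'$ coincide with the $t_0, t_1$ used for $\e_i\pi$: from the explicit shape of $H_i^{\pi'}$ found above, together with the strict monotonicity on $[t_0,t_1]$ guaranteed by \eqref{int}, one sees that $H_i^{\pi'}$ attains its minimum $m_i^\pi+1$ for the last time exactly at $t_0$ and first reaches $m_i^\pi+2$ exactly at $t_1$. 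Granting this coincidence, $\f_i\pi'$ and $\pi$ agree on $[0,t_0]$ and on $[t_1,1]$ trivially, while on $[t_0,t_1]$ one computes $(\f_i\pi')(t) = r_i(r_i(\pi(t)-\pi(t_0))) + \pi(t_0) = \pi(t)$, using $\pi'(t_0) = \pi(t_0)$ and $r_i^2 = \mathrm{id}$. The reverse implication $\e_i\f_i\pi' = \pi'$ is handled the same way. I expect the bookkeeping of which local minima are attained where — isolating the global minimum from the nearby integer values via \eqref{int} — to be the only genuinely delicate part; once the cut points are matched, everything collapses through $r_i^2 = \mathrm{id}$.
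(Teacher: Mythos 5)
Your verification is correct, and it matches the route of the cited source: the paper states this theorem without proof, quoting \cite{L}, and your axiom-check --- computing that $H_i^{\e_i\pi}$ agrees with $H_i^\pi$ on $[0,t_0]$, is the reflection about $y=m_i^\pi+1$ on $[t_0,t_1]$, and is $H_i^\pi+2$ on $[t_1,1]$, then matching the cut points \eqref{ft_0}, \eqref{ft_1} for $\f_i(\e_i\pi)$ with \eqref{et_0}, \eqref{et_1} via \eqref{int} to collapse everything through $r_i^2=\mathrm{id}$ --- is precisely how the corresponding properties (Lemma 2.1 (a)--(e) of \cite{L}) are established there. The only nontrivial external inputs, namely that $\e_i,\f_i$ send $\B(\lambda)$ into $\B(\lambda)\cup\{\mathbf{0}\}$ and the integrality \eqref{int} of local minima, you correctly take as given from \cite{L}, exactly as the paper does.
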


For $\pi=(\mu_1,\ldots,\mu_u; \sigma_0, \ldots, \sigma_u)
\in \mathbb{B}(\lambda)$,
we set $\iota(\pi):=\mu_1$ and $\kappa(\pi):=\mu_u$.
For $\pi \in\mathbb{B(\lambda)}$ and $i \in I$,
we set
$\e_i^{\mathrm{max}}\pi:= \e_i^{\varepsilon_i(\pi)}\pi$ and
$\f_i^{\mathrm{max}}\pi:= \f_i^{\varphi_i(\pi)} \pi$.

\begin{lemma}[{\cite[Proposition 4.7]{L}}]\label{IK}
Let $\pi \in\mathbb{B(\lambda)}$, and $i \in I$.
If $\langle \iota(\pi), \alpha_i^{\vee} \rangle <0$,
then $\iota(e_i^{\mathrm{max}}\pi)=r_i\iota(\pi)$.
If $\langle \kappa(\pi), \alpha_i^{\vee} \rangle >0$,
then $\kappa(f_i^{\mathrm{max}}\pi)=r_i\kappa(\pi)$.
\end{lemma}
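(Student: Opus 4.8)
The plan is to prove both assertions by analysing how a single application of the root operator modifies the path near an endpoint, and then to iterate; I would treat the statement about $\iota$ and $\e_i^{\mathrm{max}}$ in detail and obtain the statement about $\kappa$ and $\f_i^{\mathrm{max}}$ by a symmetric argument. First I record two elementary facts. Since $\pi(0)=0$ and $\pi(t)=t\,\iota(\pi)$ for $t$ in the first interval $[0,\sigma_1]$, the function $H_i^\pi(t)=\langle\pi(t),\alpha_i^\vee\rangle$ has right-derivative $\langle\iota(\pi),\alpha_i^\vee\rangle$ at $t=0$; hence the hypothesis $\langle\iota(\pi),\alpha_i^\vee\rangle<0$ forces $H_i^\pi$ to be strictly decreasing on an initial interval, so $H_i^\pi(t)<0$ for small $t>0$. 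I would then induct on $\varepsilon_i(\pi)=-m_i^\pi\ge 1$.

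For the inductive step, suppose $m_i^\pi\le -2$. In the definition of $\e_i\pi$ the point $t_0$ of \eqref{et_0} satisfies $H_i^\pi(t_0)=m_i^\pi+1\le -1<0=H_i^\pi(0)$, so $t_0>0$. Consequently $\e_i\pi$ agrees with $\pi$ on $[0,t_0]$, whence $\iota(\e_i\pi)=\iota(\pi)$ and in particular $\langle\iota(\e_i\pi),\alpha_i^\vee\rangle<0$. Since $m_i^{\e_i\pi}=m_i^\pi+1$, i.e.\ $\varepsilon_i(\e_i\pi)=\varepsilon_i(\pi)-1$, the inductive hypothesis applies to $\e_i\pi$ and gives $\iota(\e_i^{\mathrm{max}}\pi)=\iota(\e_i^{\,\varepsilon_i(\pi)-1}(\e_i\pi))=r_i\iota(\e_i\pi)=r_i\iota(\pi)$.

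The base case $\varepsilon_i(\pi)=1$, i.e.\ $m_i^\pi=-1$, is the crux. Here $t_1$ of \eqref{et_1} is the first time $H_i^\pi=-1$, and $t_0$ of \eqref{et_0} is the last time in $[0,t_1]$ with $H_i^\pi=m_i^\pi+1=0$. I claim $t_0=0$. Indeed, if $H_i^\pi(s)=0$ for some $s\in(0,t_1]$, then, as $H_i^\pi<0$ just after $0$, the function returns to $0$ at $s$ and must have a local minimum in $(0,s)$ with negative value; by \eqref{int} this value is an integer lying in $[m_i^\pi,0)=[-1,0)$, hence equals $-1$, so $H_i^\pi=-1$ somewhere in $(0,s)\subseteq(0,t_1)$, contradicting the minimality of $t_1$. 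Thus $t_0=0$, and on $[0,t_1]$ we get $(\e_i\pi)(t)=r_i(\pi(t)-\pi(0))+\pi(0)=r_i\pi(t)$; evaluating on the first interval shows $\iota(\e_i^{\mathrm{max}}\pi)=\iota(\e_i\pi)=r_i\iota(\pi)$. This integrality step is the main obstacle: the rest is bookkeeping, but the equality $t_0=0$ depends essentially on \eqref{int}, the fact that local minima of $H_i^\pi$ are integers.

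For the second assertion I would argue symmetrically, using that $\kappa(\pi)$ is the left-derivative of $\pi$ at $t=1$, so $\langle\kappa(\pi),\alpha_i^\vee\rangle>0$ makes $H_i^\pi$ strictly increasing near $1$. Inducting on $\varphi_i(\pi)=H_i^\pi(1)-m_i^\pi$: when $\varphi_i(\pi)\ge 2$ one has $H_i^\pi(1)\ge m_i^\pi+2$, so the point $t_1$ of \eqref{ft_1} lies strictly before $1$ by the intermediate value theorem, the modification on $[t_1,1]$ is a mere translation by $-\alpha_i$, and $\kappa$ is unchanged while $\varphi_i$ drops by one. In the base case $\varphi_i(\pi)=1$ one has $H_i^\pi(1)=m_i^\pi+1$, and the same local-minimum integrality argument, applied near $t=1$ with $t_0$ the last time $H_i^\pi=m_i^\pi$ as in \eqref{ft_0}, shows that $H_i^\pi\neq m_i^\pi+1$ on $(t_0,1)$, hence $t_1=1$; then the reflection reaches the right endpoint and yields $\kappa(\f_i^{\mathrm{max}}\pi)=r_i\kappa(\pi)$.
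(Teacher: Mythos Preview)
The paper does not give its own proof of this lemma; it is quoted verbatim from \cite[Proposition~4.7]{L} and used as a black box. Your argument is a correct, self-contained proof: the induction on $\varepsilon_i(\pi)$ (respectively $\varphi_i(\pi)$), together with the integrality property \eqref{int} forcing $t_0=0$ in the base case for $\e_i$ (respectively $t_1=1$ for $\f_i$), is exactly the standard mechanism behind Littelmann's result. There is therefore nothing to compare beyond noting that you have supplied the details the paper delegates to the reference.
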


\section{Weyl group orbits.}
\subsection{Weyl group orbit $O $ satisfying
$O \cap (P^+ \cup -P^+) = \emptyset$.}
If $\lambda \in P $ is a dominant (resp., antidominant)
weight, then $\B(\lambda)$ is isomorphic to the
crystal basis $\mathcal{B}(\lambda)$
of the integrable highest (resp., lowest) weight module
of highest (resp., lowest) weight $\lambda$ (see \cite[Theorem 4.1]{Ksim}, \cite[\S 6.4.27]{J}).
Also, we see by the definition of  LS paths
that $\mathbb{B}(\lambda)=\mathbb{B}(w\lambda)$
for $\lambda \in P$ and $w \in W$.
So, we focus on those $\lambda \in P $
satisfying the condition that
\begin{equation}\label{A}
	W\lambda  \cap (P^+ \cup -P^+) = \emptyset.
\end{equation}
Let $\mathbb{O}:=\{ W\lambda \mid \lambda \in P \}$ be
the set of $W$-orbits in $P$.
Recall that the {generalized} Cartan matrix of $\mathfrak{g}$ is of the form \eqref{eq.gcm}.
\begin{theorem}\label{thm.M}
	A $W$-orbit $O\in \mathbb{O}$ satisfies the condition \eqref{A},
	that is, $O  \cap (P^+ \cup -P^+) = \emptyset$,  if and only if
	$O$ contains $\lambda \in P$ of the form either
	$(\mathrm{i})$ or $(\mathrm{ii})$$\mathrm{:}$
	\begin{itemize}
		\item[$(\mathrm{i})$] $k\Lambda_1-l\Lambda_2$ for some
		$k, l \in \Z_{>0}$ such that $l\leq k<(a-1)l $$\mathrm{;}$
		\item[$(\mathrm{ii})$] $k\Lambda_1-l\Lambda_2$ for some
		$k, l \in \Z_{>0}$ such that $k<l\leq (b-1)k $.
	\end{itemize}
\end{theorem}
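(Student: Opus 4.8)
The plan is to analyze the structure of a $W$-orbit $W\lambda$ for $\lambda \in P$ by tracking how the coefficients of an integral weight written as $k\Lambda_1 + l\Lambda_2$ transform under the two simple reflections $r_1, r_2$. Using the relations $\langle \Lambda_i, \alpha_j^\vee\rangle = \delta_{i,j}$ together with the explicit expressions $\alpha_1 = 2\Lambda_1 - b\Lambda_2$ and $\alpha_2 = -a\Lambda_1 + 2\Lambda_2$, one computes directly that for $\mu = k\Lambda_1 + l\Lambda_2$ we have $r_1\mu = -k\Lambda_1 + (l + bk)\Lambda_2$ and $r_2\mu = (k + al)\Lambda_1 - l\Lambda_2$. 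The key observation is that a weight $\mu = k\Lambda_1 + l\Lambda_2$ lies in $P^+ \cup (-P^+)$ precisely when $k$ and $l$ have the same sign (or one of them is zero), so condition \eqref{A} holds for $O = W\lambda$ if and only if \emph{every} element of $O$ has coefficients of strictly opposite signs. Thus I would first reduce to understanding the orbit of a single weight with mixed-sign coefficients and show every such orbit has a canonical representative of the stated form.

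\textbf{Forward direction (necessity).} Assuming $O \cap (P^+ \cup -P^+) = \emptyset$, I would pick any representative $\mu = k\Lambda_1 + l\Lambda_2 \in O$; by the sign observation it must have $k > 0, l < 0$ or $k < 0, l > 0$. Writing $l = -l'$ with $l' > 0$ in the first case (so $\mu = k\Lambda_1 - l'\Lambda_2$), I would apply the reflection formulas and study how the pair $(k, l')$ evolves. The plan is to run a descent/normalization argument: repeatedly apply whichever of $r_1, r_2$ decreases a suitable size measure (for instance $k + l'$, or $\max(k, l')$) while preserving the mixed-sign property, and show the process terminates at a weight satisfying one of the sharp inequalities $l \le k < (a-1)l$ or $k < l \le (b-1)k$. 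Here the hypotheses $a, b \ge 2$ and $ab > 4$ are exactly what guarantees that the reflections genuinely shrink the coefficients in the relevant regime rather than cycling, so I would verify the inequalities defining cases (i) and (ii) demarcate the ``fundamental domain'' boundaries where no further sign-preserving descent is possible.

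\textbf{Reverse direction (sufficiency).} Conversely, given a weight of form (i) or (ii), I would show its entire orbit consists of mixed-sign weights, hence misses $P^+ \cup (-P^+)$. For a weight $k\Lambda_1 - l\Lambda_2$ with $l \le k < (a-1)l$, applying $r_1$ and $r_2$ and checking signs of the resulting coefficients, I would argue inductively along the enumeration $\{x_m\}_{m \in \Z}$ of $W$ given in \eqref{xm} that every $x_m \lambda$ again has strictly opposite-sign coefficients; the inequality $k < (a-1)l$ is precisely the condition ensuring that $r_2$ does not push the weight into the dominant cone, and symmetrically for case (ii) with $b$. \textbf{The main obstacle} I anticipate is the bookkeeping in the descent argument: I must pin down the exact size function and verify that the boundary inequalities in (i) and (ii) are both necessary (descent cannot continue past them) and sufficient (orbits of such weights never enter $P^+ \cup -P^+$), handling the asymmetry between the roles of $a$ and $b$ and the two chains of reflections $(r_2 r_1)^k$ versus $(r_1 r_2)^k$ carefully so that the two cases (i) and (ii) together cover all mixed-sign orbits without overlap.
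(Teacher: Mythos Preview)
Your plan is essentially the paper's approach. The paper packages your descent via an integer sequence $\{p_m\}_{m\in\Z}$ defined by $p_0=l$, $p_1=k$, and the alternating two-term recurrence $p_{m+2}=bp_{m+1}-p_m$ or $ap_{m+1}-p_m$, so that $x_m\lambda$ has coordinates $(\pm p_{m+1},\mp p_m)$; condition \eqref{A} then becomes ``$p_m>0$ for all $m$'', your descent becomes ``locate the minimum of $\{p_m\}$'', and the reverse direction is a one-line monotonicity lemma ($0<p_n<p_{n+1}\Rightarrow 0<p_{n+1}<p_{n+2}$, using only $a,b\ge 2$).

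One concrete correction to your reverse-direction paragraph: the inequality $k<(a-1)l$ is \emph{not} the condition that $r_2\lambda$ avoids $P^+$ (that condition is the weaker $k<al$, since $r_2(k\Lambda_1-l\Lambda_2)=(k-al)\Lambda_1+l\Lambda_2$). Its actual role is that $k<(a-1)l$ says $p_{-1}=al-k>l=p_0$, while $l\le k$ says $p_0\le p_1$; together these pin the minimum of the $p$-sequence at $m=0$, which is exactly what makes the monotonicity lemma fire in both directions. If you try to run the induction with the interpretation you wrote, the boundary between cases (i) and (ii) will come out wrong by one step. The boundary cases $l=k$ and $l=(b-1)k$ (where two consecutive $p_m$ coincide) also need a separate sentence, since the strict monotonicity lemma does not apply directly there; the paper handles the diagonal case $k\Lambda_1-k\Lambda_2$ by citing \cite{Yu}, but a direct check using $ab>4$ works too.
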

\begin{remark}\label{rem.main1}
	We claim that
	$O\in \mathbb{O}$ contains $\lambda \in P$ of the form
	either  $(\mathrm{i})$ or $(\mathrm{ii})$ in Theorem \ref{thm.M}
	if and only if
	$O$ contains $\lambda' \in P$ of the form
	either
	$(\mathrm{i'})$--$(\mathrm{iv'})$$\mathrm{:}$
	\begin{itemize}
		\item[$(\mathrm{i'})$] $k\Lambda_1-k\Lambda_2$ for some
		$k \in \Z_{>0}$$\mathrm{;}$
		\item[$(\mathrm{ii'})$] $k\Lambda_1-(b-1)k\Lambda_2$ for some
		$k \in \Z_{>0}$$\mathrm{;}$
		\item[$(\mathrm{iii'})$] $k\Lambda_1-l\Lambda_2$ for some
		$k, l \in \Z_{>0}$ such that $l<k<(a-1)l $$\mathrm{;}$
		\item[$(\mathrm{iv'})$] $k\Lambda_1-l\Lambda_2$ for some
		$k, l \in \Z_{>0}$ such that $k<l< (b-1)k $.
	\end{itemize}
	Indeed, the ``only if'' part is obvious.
	We show the ``if'' part.
	If $\lambda'$ is of the form  (iii') (resp., (iv')),
	then it is obvious that $\lambda'$ is of the form  $(\mathrm{i})$ (resp., $(\mathrm{ii})$).
	Assume that $\lambda'$ is of the form (i').
	If $a\geq 3$, then $\lambda'$ is of the form $(\mathrm{i})$.
	If $a =2$, then we see that {$b\geq 3$ and} $O$ contains $r_1 r_2\lambda'=r_1 r_2(k\Lambda_1-k\Lambda_2)=k\Lambda_1-(b-1){k}\Lambda_2 $,
	which is of the form $(\mathrm{ii})$.
	Assume that $\lambda'$ is of the form (ii').
	If $b\geq 3$, then  $\lambda'$ is of the form $(\mathrm{ii})$.
	If $b =2$, then we see that {$a\geq3$ and} $\lambda'=k\Lambda_1-(b-1)k\Lambda_2=k\Lambda_1-k\Lambda_2$
	is of the form $(\mathrm{i})$.
\end{remark}
The rest of this subsection is devoted to a proof of Theorem \ref{thm.M}.
For $\lambda \in P$ of the form:
$\lambda=k\Lambda_1-l\Lambda_2$ with  $k, l \in \Z$,
we define the sequence $\{ p_m \}_{m\in \Z} $ of integers
by the following recursive formulas: For $m\geq 0$,
\begin{equation}\label{eq.pm}
	p_0=l, \quad
	p_1=k, \quad
	p_{m+2}=
	\begin{cases}
		bp_{m+1}-p_m & \text{if} \ m \ \text{is even}, \\
		ap_{m+1}-p_m & \text{if} \ m \ \text{is odd}; \\
	\end{cases}
	\end{equation}
for  $m<0$,
	% We can extend this sequence to negative index  $m<0$.
	% We define the sequence $\{ p_m \}_{m\leq1} $ of integers
	% by the same recursive formula:
	\begin{equation}\label{eq.pm2}
		p_{m}=
		\begin{cases}
			bp_{m+1}-p_{m+2} & \text{if} \ m \ \text{is even}, \\
			ap_{m+1}-p_{m+2} & \text{if} \ m \ \text{is odd}. \\
		\end{cases}
\end{equation}
By induction on $|m|$, we can show the following lemma.
\begin{lemma}\label{lem.xm}
	For $m \in \Z$,
	\begin{equation*}
		x_m\lambda=
		\begin{cases}
			p_{m+1}\Lambda_1-p_m\Lambda_2 &
			\text{if}\ m \ \text{is even},\\
			-p_m\Lambda_1+p_{m+1}\Lambda_2 &
			\text{if}\ m \ \text{is odd}.\\
		\end{cases}
	\end{equation*}
\end{lemma}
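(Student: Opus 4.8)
The plan is to prove the lemma by induction on $|m|$, exploiting the recursive structure of both $\{x_m\}_{m\in\Z}$ and $\{p_m\}_{m\in\Z}$. The base case $m=0$ is immediate: since $x_0$ is the identity, $x_0\lambda=\lambda=k\Lambda_1-l\Lambda_2=p_1\Lambda_1-p_0\Lambda_2$, which matches the even-$m$ formula. The crucial preliminary observation, read off directly from \eqref{xm}, is that consecutive elements of $\{x_m\}$ differ by a single simple reflection whose index alternates with the parity of $m$. Concretely, for $m\ge 0$ one has $x_{m+1}=r_1 x_m$ when $m$ is even and $x_{m+1}=r_2 x_m$ when $m$ is odd; for $m\le 0$ one has $x_{m-1}=r_2 x_m$ when $m$ is even and $x_{m-1}=r_1 x_m$ when $m$ is odd. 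Each of these follows from the four cases $(r_2r_1)^k$, $r_1(r_2r_1)^k$, $(r_1r_2)^{-k}$, $r_2(r_1r_2)^{-k}$ by peeling off one factor at a time.

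Next I would record the action of the simple reflections in $\Lambda$-coordinates. Since $\langle x\Lambda_1+y\Lambda_2,\alpha_i^\vee\rangle$ equals the $\Lambda_i$-coefficient, and $\alpha_1=2\Lambda_1-b\Lambda_2$, $\alpha_2=-a\Lambda_1+2\Lambda_2$, the identity $r_i\mu=\mu-\langle\mu,\alpha_i^\vee\rangle\alpha_i$ gives
\begin{equation*}
r_1(x\Lambda_1+y\Lambda_2)=-x\Lambda_1+(bx+y)\Lambda_2,\qquad
r_2(x\Lambda_1+y\Lambda_2)=(x+ay)\Lambda_1-y\Lambda_2 .
\end{equation*}
These two formulas are the computational engine of the whole argument.

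For the inductive step in the positive direction, suppose the claim holds for some even $m\ge 0$, so $x_m\lambda=p_{m+1}\Lambda_1-p_m\Lambda_2$. Applying $r_1$ and using the display yields $x_{m+1}\lambda=-p_{m+1}\Lambda_1+(bp_{m+1}-p_m)\Lambda_2$; since $m$ is even, \eqref{eq.pm} gives $bp_{m+1}-p_m=p_{m+2}$, so $x_{m+1}\lambda=-p_{m+1}\Lambda_1+p_{m+2}\Lambda_2$, which is exactly the odd-$m$ formula for $m+1$. The case of odd $m$ (apply $r_2$, invoke the $a$-branch of \eqref{eq.pm}) is entirely parallel. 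The negative direction is handled the same way, now decreasing $m$ by $1$, applying $r_2$ (resp.\ $r_1$) for even (resp.\ odd) $m$ and invoking the backward recursion \eqref{eq.pm2}; in each branch one checks that the index on the resulting $p$-coefficient is precisely $m-1$.

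The only genuine subtlety --- and the step I would be most careful about --- is the bookkeeping of parities and recursion indices: one must verify at each branch that the coefficient produced by the reflection, such as $bp_{m+1}-p_m$ or $ap_{m+1}-p_m$, carries exactly the subscript $m+2$ (resp.\ $m-1$) demanded by the target formula, and that the $a/b$ alternation in \eqref{eq.pm}--\eqref{eq.pm2} is synchronized with the $r_1/r_2$ alternation in \eqref{xm}. Since the two-sided recursion for $\{p_m\}$ is set up precisely to mirror this alternation, the verification becomes mechanical once the reflection formulas and the single base case $m=0$ are in place.
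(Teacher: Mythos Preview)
Your proposal is correct and follows exactly the approach indicated in the paper, which simply states that the lemma is shown ``by induction on $|m|$'' without providing details. Your writeup supplies the expected content of that omitted induction---the base case $m=0$, the explicit action of $r_1,r_2$ in the $\Lambda$-basis, and the parity-matched step in each direction using \eqref{eq.pm} and \eqref{eq.pm2}---and the computations check out.
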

\begin{corollary}\label{cor.pos}
	Let $\lambda=k\Lambda_1-l\Lambda_2 \in P$ be
	an integral weight.
	The Weyl group orbit $W\lambda\in \mathbb{O}$
	satisfies the condition \eqref {A}
	if and only if $p_m>0$ for all $m \in \Z$
	or $p_m<0$  for all $m \in \Z$.
\end{corollary}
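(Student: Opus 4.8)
The plan is to translate the orbit condition \eqref{A} into an elementary sign condition on the integers $p_m$ by means of Lemma \ref{lem.xm}. Since $W=\{x_m\mid m\in\Z\}$, the orbit $W\lambda$ is exactly $\{x_m\lambda\mid m\in\Z\}$, and Lemma \ref{lem.xm} expresses each $x_m\lambda$ as a $\Z$-linear combination of $\Lambda_1$ and $\Lambda_2$ whose coefficients are $\pm p_m$ and $\pm p_{m+1}$. Recalling that a weight $p\Lambda_1+q\Lambda_2$ lies in $P^+$ iff $p,q\ge 0$ and in $-P^+$ iff $p,q\le 0$, I would read off, for each $m$, precisely when $x_m\lambda$ is dominant or antidominant.

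First I would treat the two parities separately. For $m$ even, $x_m\lambda=p_{m+1}\Lambda_1-p_m\Lambda_2$, so $x_m\lambda\in P^+$ iff $p_{m+1}\ge 0$ and $p_m\le 0$, while $x_m\lambda\in -P^+$ iff $p_{m+1}\le 0$ and $p_m\ge 0$. For $m$ odd, $x_m\lambda=-p_m\Lambda_1+p_{m+1}\Lambda_2$, and the same two inequalities reappear with the roles of the two coordinates swapped, so one obtains an identical criterion. Consequently, in both parities, $x_m\lambda\in P^+\cup(-P^+)$ holds if and only if $p_m\le 0\le p_{m+1}$ or $p_{m+1}\le 0\le p_m$, that is, if and only if $p_m p_{m+1}\le 0$.

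With this reformulation in hand, condition \eqref{A} --- that no $x_m\lambda$ is dominant or antidominant --- becomes the statement that $p_m p_{m+1}>0$ for every $m\in\Z$. The final step is to observe that this ``consecutive same strict sign'' condition is equivalent to the global statement that all $p_m$ are positive or all $p_m$ are negative: if $p_m p_{m+1}>0$ for all $m$, then no $p_m$ vanishes and the sign of $p_m$ is constant along the (connected) index set $\Z$, whence all the $p_m$ share one strict sign; the converse is immediate. This yields the asserted equivalence.

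There is no deep obstacle here: the argument is a direct dictionary between the orbit and the recursively defined sequence supplied by Lemma \ref{lem.xm}. The only points requiring care are the bookkeeping of the even/odd split (checking that both parities produce the same sign criterion) and the strict-versus-nonstrict distinction --- in particular ruling out the boundary cases where some $p_m=0$, which is exactly what forces ``strict sign'' rather than merely ``same sign'' in the conclusion.
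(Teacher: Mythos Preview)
Your argument is correct and is precisely the straightforward deduction from Lemma \ref{lem.xm} that the paper intends; the paper states the result as an immediate corollary without writing out a proof, and your translation of $x_m\lambda\in P^+\cup(-P^+)$ into the sign condition $p_m p_{m+1}\le 0$ is exactly the unpacking one would supply.
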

\begin{lemma}\label{lem.p}
%一回増加(もしくは等しい)したらその後も増加することに関する補題
	Let $\lambda=k\Lambda_1-l\Lambda_2\in P$.\\
$(1)$
		If there exists $n' \in \Z_{\geq 0}$
		such that $0<p_{n'} <p_{n'+1} $,
		then $0 < p_{n} < p_{n+1}$ for all $n\geq n'$.\\
$(2)$
		If there exists $n' \in \Z_{\leq 1}$
		such that $0<p_{n'} <p_{n'-1} $,
		then $ 0 < p_{n} < p_{n-1} $ for all  $n \leq n'$.
\end{lemma}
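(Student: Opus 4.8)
The plan is to prove both statements by induction, exploiting the single crucial feature of the defining recursions \eqref{eq.pm} and \eqref{eq.pm2}: in every instance the recursion coefficient is either $a$ or $b$, and both lie in $\Z_{\geq 2}$. First I would record a uniform three-term relation valid for all indices. Rearranging \eqref{eq.pm} shows that for $m \geq 0$ one has $p_{m+2} = c_m p_{m+1} - p_m$, equivalently $p_m = c_m p_{m+1} - p_{m+2}$, where $c_m = b$ if $m$ is even and $c_m = a$ if $m$ is odd; comparing with \eqref{eq.pm2} shows the same relation holds for $m < 0$. Hence, for every $m \in \Z$,
\[
p_{m+2} = c_m p_{m+1} - p_m, \qquad c_m \in \{a,b\} \subseteq \Z_{\geq 2}.
\]
This unified form is what lets me treat the forward case $(1)$ and the backward case $(2)$ by the same mechanism.

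For part $(1)$ I would induct on $n \geq n'$, the base case $n = n'$ being the hypothesis $0 < p_{n'} < p_{n'+1}$. For the inductive step, assume $0 < p_n < p_{n+1}$; then $p_{n+1} > 0$ is immediate, and since $n \geq n' \geq 0$ the forward relation gives
\[
p_{n+2} = c_n p_{n+1} - p_n \geq 2 p_{n+1} - p_n > 2 p_{n+1} - p_{n+1} = p_{n+1} > 0,
\]
using $c_n \geq 2$ for the first inequality and $p_n < p_{n+1}$ for the second. This yields $0 < p_{n+1} < p_{n+2}$, completing the induction.

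Part $(2)$ is the mirror image. Here I would induct downward on $n \leq n'$, with base case $n = n'$ given by $0 < p_{n'} < p_{n'-1}$. Writing the relation in the form $p_{n-2} = c_{n-2} p_{n-1} - p_n$ (valid since $n \leq n' \leq 1$ forces $n-2 \leq -1 < 0$), the inductive step from $0 < p_n < p_{n-1}$ gives $p_{n-1} > 0$ and
\[
p_{n-2} = c_{n-2} p_{n-1} - p_n \geq 2 p_{n-1} - p_n > p_{n-1} > 0,
\]
so $0 < p_{n-1} < p_{n-2}$, as required.

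I do not anticipate a genuine obstacle: once the uniform relation with coefficient $\geq 2$ is in place, both inductions collapse to the single two-line estimate above, and notably the hypothesis $ab > 4$ plays no role here (only $a, b \geq 2$ is used). The one point demanding a little care is verifying that the rearranged form of \eqref{eq.pm} agrees with \eqref{eq.pm2} across the sign boundary $m = 0$, so that the relation $p_{m+2} = c_m p_{m+1} - p_m$ with parity-determined coefficient is genuinely valid for all $m \in \Z$; this is a routine check but is the step on which the uniform treatment of the two parts rests.
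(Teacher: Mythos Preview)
Your proof is correct and follows essentially the same approach as the paper: an induction on $n$ using the three-term recurrence with coefficient $\geq 2$ to bound $p_{n+1} - p_n$ (the paper writes the difference as $(a'-1)(p_n - p_{n-1}) + (a'-2)p_{n-1}$, whereas you use the direct chain $c_n p_{n+1} - p_n \geq 2p_{n+1} - p_n > p_{n+1}$, but these are the same idea). Your observation that only $a,b \geq 2$ is needed, not $ab > 4$, is correct and worth noting.
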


\begin{proof}
	We give a proof only for part (1);
	the proof for part (2) is similar.
	We {proceed} by induction on $n$.
	The assertion is trivial when $n=n'$.
	Assume that $n>n'$.
	We set
	\begin{equation*}
		a':=
		\begin{cases}
			a & \text{if } n \text{ is even},\\
			b & \text{if } n \text{ is odd};
		\end{cases}
	\end{equation*}
	note that $p_{n+1}=a'p_{n}-p_{n-1}$.
	Then we compute
	\begin{equation*}
		p_{n+1}-p_{n}=(a'p_{n}-p_{n-1})-p_{n}=(a'-1)(p_{n}-p_{n-1})+(a'-2)p_{n-1}.
	\end{equation*}
	Because  $a'\geq 2$ (see \eqref{eq.gcm}),
	and {$p_n>p_{n-1}>0$} by the
	 induction hypothesis, we obtain $p_{n+1}>p_n$ as desired.
	% From this, we obtain inductively that $p_m>0$ for all $m\geq n'$
\end{proof}

% We know from \cite[Proposition 3.1.1]{Yu} that
% $\mu=\Lambda_1-\Lambda_2 $
% satisfies the condition \eqref{A}.
% Hence we see that
% $k\mu=k\Lambda_1-k\Lambda_2$
% also satisfies the condition \eqref{A}
% for every $ k \in \Z \backslash  \{ 0 \}$.
% So, we may assume that
% $p_m \neq p_{m+1}$ for any $m \in \Z$ (see Lemma \ref{lem.xm}).

\begin{proposition}\label{prop.orbit}
	Let $\lambda=k\Lambda_1-l\Lambda_2 \in P$
	with $k, l >0$.
	If $p_m \neq p_{m+1}$ for any $m \in \Z $,
	then the following are equivalent.
	\begin{enumerate}
		\item[$(1)$]
			The Weyl group orbit
			$W\lambda$ satisfies the condition \eqref{A},
			or equivalently, $p_m>0$
			for all $m \in \Z $ by $\mathrm{Corollary}$ $\ref{cor.pos}$
			and the assumption that $k, l >0$.
		\item[$(2)$]
			There exists an element
			$\lambda'=k'\Lambda_1-l'\Lambda_2$ in  $W\lambda$ satisfying
			the {conditions} that
			$k', l' \in \Z_{>0}$,
			and  $l'<k'<(a-1)l'$ or $k'<l'<(b-1)k'$.
	\end{enumerate}
\end{proposition}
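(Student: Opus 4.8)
The plan is to prove the two implications $(1)\Rightarrow(2)$ and $(2)\Rightarrow(1)$ separately, in both cases exploiting the description $W\lambda=\{x_m\lambda\mid m\in\Z\}$ furnished by Lemma \ref{lem.xm}. Before starting I would record two reductions. First, the two recursions \eqref{eq.pm} and \eqref{eq.pm2} together amount to a single three-term recurrence $p_{m+1}=a'p_m-p_{m-1}$, valid for \emph{every} $m\in\Z$, where $a'=a$ if $m$ is even and $a'=b$ if $m$ is odd; this symmetric form is what lets me run the same computation forwards and backwards. Second, by Lemma \ref{lem.xm}, under hypothesis $(1)$ (i.e.\ $p_n>0$ for all $n$) the element $x_m\lambda$ has the shape $k'\Lambda_1-l'\Lambda_2$ with $k',l'>0$ precisely when $m$ is even, in which case $k'=p_{m+1}$ and $l'=p_m$. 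Hence the candidates for the weight $\lambda'$ in $(2)$ live exactly at the even indices.

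For $(1)\Rightarrow(2)$ I would argue as follows. Assuming $p_n>0$ for all $n$, the set $\{p_n\mid n\in\Z\}$ is a nonempty set of positive integers, so it attains a minimum at some index $m^*$; since $p_n\neq p_{n+1}$ for every $n$ by hypothesis, this minimum is strict on both sides, $p_{m^*}<p_{m^*-1}$ and $p_{m^*}<p_{m^*+1}$. Now I split on the parity of $m^*$. If $m^*$ is even, I take $\lambda'=x_{m^*}\lambda=p_{m^*+1}\Lambda_1-p_{m^*}\Lambda_2$, so $(k',l')=(p_{m^*+1},p_{m^*})$; the relation $l'<k'$ is exactly $p_{m^*}<p_{m^*+1}$, and rewriting $p_{m^*+1}=ap_{m^*}-p_{m^*-1}$ (here $a'=a$) the inequality $k'<(a-1)l'$ reduces to $p_{m^*}<p_{m^*-1}$, so both parts of the first alternative of $(2)$ hold. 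If instead $m^*$ is odd, then $x_{m^*}\lambda$ has negative $\Lambda_1$-coefficient, so I pass to the adjacent even index and take $\lambda'=x_{m^*-1}\lambda=p_{m^*}\Lambda_1-p_{m^*-1}\Lambda_2$, so $(k',l')=(p_{m^*},p_{m^*-1})$; here $k'<l'$ is $p_{m^*}<p_{m^*-1}$, and rewriting $p_{m^*-1}=bp_{m^*}-p_{m^*+1}$ (now $a'=b$) the inequality $l'<(b-1)k'$ reduces to $p_{m^*}<p_{m^*+1}$, giving the second alternative of $(2)$.

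For $(2)\Rightarrow(1)$ I would instead work with the sequence $\{p'_m\}$ attached by \eqref{eq.pm}--\eqref{eq.pm2} to the given element $\lambda'=k'\Lambda_1-l'\Lambda_2\in W\lambda$, so that $p'_0=l'$ and $p'_1=k'$, and show $p'_m>0$ for all $m$; since $W\lambda'=W\lambda$, Corollary \ref{cor.pos} then yields $(1)$. If $l'<k'<(a-1)l'$, then $0<p'_0<p'_1$ lets me apply Lemma \ref{lem.p}(1) with $n'=0$ to get positivity for $n\geq0$, while $p'_{-1}=al'-k'>l'=p'_0>0$ gives $0<p'_0<p'_{-1}$, so Lemma \ref{lem.p}(2) with $n'=0$ yields positivity for $n\leq0$. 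If $k'<l'<(b-1)k'$, then $p'_2=bk'-l'>k'=p'_1>0$ gives $0<p'_1<p'_2$, and Lemma \ref{lem.p}(1) with $n'=1$ handles $n\geq1$, while $0<p'_1<p'_0$ and Lemma \ref{lem.p}(2) with $n'=1$ handle $n\leq1$. Either way $p'_m>0$ for all $m$, as required.

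The arithmetic rewritings of the recurrence are routine. The one genuinely delicate step, and the one I would check most carefully, is in $(1)\Rightarrow(2)$: the minimizing index $m^*$ may be odd, at which $x_{m^*}\lambda$ has the wrong sign, so one is forced to pass to $x_{m^*-1}\lambda$ and verify that it is the $b$-inequality (the second alternative) rather than the $a$-inequality that comes out. I would also make sure that the strict local-minimum inequalities $p_{m^*}<p_{m^*\pm1}$ translate into the \emph{strict} bounds $l'<k'<(a-1)l'$ or $k'<l'<(b-1)k'$ demanded by $(2)$, and not their non-strict versions; this is exactly where the hypothesis $p_m\neq p_{m+1}$ is indispensable.
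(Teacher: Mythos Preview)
Your proof is correct and follows essentially the same approach as the paper's: both directions hinge on locating a strict local minimum of the sequence $\{p_m\}$ (you phrase it as a global minimum, the paper as an index $n$ with $p_{n-1}>p_n<p_{n+1}$, which amounts to the same thing), splitting on the parity of that index, and in the converse direction defining the auxiliary sequence $\{p'_m\}$ for $\lambda'$ and invoking Lemma~\ref{lem.p}. Your write-up is a bit more explicit about why the relevant $x_m\lambda$ must be taken at an even index and about the arithmetic verifications, but the substance is the same.
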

\begin{proof}
	$(1)\Rightarrow (2)$:
	Since $p_m$ is a positive integer for every $m \in \Z$ by the assumption in (1),
	and since $p_m \neq p_{m+1}$ for any $m \in \Z $ by the assumption,
	there exists $n\in\Z $ such that
	$p_{n-1}> p_{n}< p_{n+1}.$
	If $n$ is even, then we have
	$(a-1)p_{n}-p_{n+1} = p_{n-1}-p_{n}>0 $
	by {\eqref{eq.pm} and \eqref{eq.pm2}}.
	Hence, $\lambda' := x_{n}\lambda =p_{n+1}\Lambda_1-{p_n}\Lambda_2 $
	satisfies the condition $p_n<p_{n+1}<(a-1)p_{n}$.
	Similarly, if $n$ is odd, then we have
	$(b-1)p_{n}-p_{n-1} = p_{n+1}-p_{n}>0 $ by {\eqref{eq.pm} and \eqref{eq.pm2}}.
	Hence, $\lambda' := x_{n-1}\lambda=p_{n}\Lambda_1- p_{n-1}\Lambda_2 $
	satisfies the condition $p_{n}<p_{n-1}<(b-1)p_n$.

	$(2)\Rightarrow (1)$:
	Assume that $W\lambda $ contains an element $\lambda'$ of the form:
	 $\lambda'=k'\Lambda_1-l'\Lambda_2 \in W\lambda$ with
	$k', l' \in \Z_{>0}$ such that  $l'<k'<(a-1)l'$
	(resp., $k'<l'<(b-1)k'$).
	We define the sequence $\{ p'_m\}_{m\in \Z}$ for $\lambda'$
	in the same manner as  (\ref{eq.pm}) and \eqref{eq.pm2}:
	\begin{equation*}
		p'_0=l',  \quad
		p'_1=k',  \quad
		p'_{m+2}=
		\begin{cases}
			bp'_{m+1}-p'_m & \text{if} \ m \ \text{is even}, \\
			ap'_{m+1}-p'_m & \text{if} \ m \ \text{is odd}. \\
		\end{cases}
	\end{equation*}
	Since $l'<k'<(a-1)l'$ (resp., $k'<l'<(b-1)k'$),
	it is easy to check that
	$p'_{-1}>p'_0<p'_1$ (resp.,  $p'_0>p'_1< p'_{2}$).
	Applying Lemma \ref{lem.p},
	we obtain $p'_m>0$ for all $m \in \Z$.
	Hence,
	we see from Corollary \ref{cor.pos} that $W\lambda'=W\lambda$ satisfies the condition \eqref{A}.
	Thus, we have proved the proposition.
\end{proof}

\begin{remark}\label{rem.orbit}
	% Assume that $\lambda$ is of the form $(\mathrm{iii'})$ (resp., $(\mathrm{iv'})$).
	% We define the sequence $\{p_m{m\in\Z}\}$ for $\lambda$
	By Lemma \ref{lem.p} and the proof of Proposition \ref{prop.orbit},
	we see that if $\lambda$ is of the form $(\mathrm{iii'})$ (resp., $(\mathrm{iv'})$) in Remark \ref{rem.main1},
	then
	\begin{align*}
			&\cdots> p_{-1}> p_0=l <p_1=k< p_2< \cdots\\
		&\text{(resp., $\cdots> p_{-1}> p_0=l  >p_1=k< p_2< \cdots$),}
	\end{align*}
	where the sequence $\{p_m\}_{m\in\Z}$ is defined by the recursive formulas \eqref{eq.pm} and \eqref{eq.pm2} for $\lambda$.
\end{remark}

\begin{proof}[Proof of Theorem \ref{thm.M}]
	By Remark \ref{rem.main1}, it suffices to show that $O$ satisfies the condition \eqref{A}
	if and only if $O$ contains $\lambda \in P $ of the form either (i')--(iv') in Remark \ref{rem.main1}.

	First, we prove the ``if'' part.
	We know from \cite[Proposition 3.1.1]{Yu} that
	if $\mu=\Lambda_1-\Lambda_2$, then  $W\mu$
	satisfies the condition \eqref{A}.
	Hence, $W(k\mu)$ also satisfies the condition \eqref{A}
	for every $ k \in \Z \backslash  \{ 0 \}$.
	Since $r_1(k\Lambda_1-(b-1)k\Lambda_2)=-k\Lambda_1+k\Lambda_2={-k\mu}$,
	we see that for $\lambda$ of the form $(\mathrm{ii'})$,
	$W\lambda$ satisfies the condition \eqref{A}.
	Also,
	we see from  (2) $\Rightarrow$  (1) in Proposition \ref{prop.orbit} that for $\lambda$ of the form $(\mathrm{iii'})$ or $(\mathrm{iv'})$,
	$W\lambda$ satisfies the condition \eqref{A}.
	Thus we have proved the ``if'' part.

	Next, we prove the ``only if'' part.
	Assume that $O\in\mathbb{O}$ satisfies the condition \eqref{A}.
	% It suffices to show that $O$ contains an integral weight of
	% one of the forms $(\mathrm{i'})$--$(\mathrm{iv'})$.
	By Lemma \ref{lem.xm}, we see that $O$ contains $\lambda=k\Lambda_1-l\Lambda_2$
	such that $k, l >0 $.
	Then we define the
	sequence $\{p_m\}_{m\in \Z}$ by the recursive formulas
	\eqref{eq.pm} and \eqref{eq.pm2} for this $\lambda$.
	If $p_m=p_{m+1}$ for some $m \in \Z$,
	we see by Lemma \ref{lem.xm} that $O=W\lambda$ contains $p_m\Lambda_1-p_m\Lambda_2$ or
	$-p_m\Lambda_1+p_m\Lambda_2=r_1(p_m\Lambda_1-(b-1)p_m\Lambda_2)$.
	Hence,
	$W\lambda$ contains an integral weight
	of the form either  $(\mathrm{i'})$ or ($\mathrm{ii'}$).
	If $p_m\neq p_{m+1}$ for any $m\in\Z$,
	then we see from (1) $\Rightarrow$ (2) in Proposition \ref{prop.orbit}
	that $O=W\lambda$ contains an integral weight of the form $(\mathrm{iii'})$ or $(\mathrm{iv'})$.
	Thus we have proved Theorem \ref{thm.M}.
\end{proof}
\subsection{Hasse diagram of $W\lambda $.}
In this subsection,
we assume that $\lambda$ is of the form either (i) or (ii)
in Theorem \ref{thm.M}.
We draw the Hasse diagram of $W\lambda$
(in the ordering of Definition \ref{order}).
Recall that $p_m>0$ for all $m\in \Z$ (by Corollary \ref{cor.pos}).

\begin{proposition}[{cf. \cite[Proposition 3.2.5]{Yu}}]\label{prop.Hasse}
	The Hasse diagram of $W\lambda$ is
	\begin{equation}\label{eq.Hasse}
		\cdots \xleftarrow{\alpha_1} x_2\lambda \xleftarrow{\alpha_2} x_1\lambda \xleftarrow{\alpha_1}
		x_0\lambda
		\xleftarrow{\alpha_2} x_{-1}\lambda \xleftarrow{\alpha_1} x_{-2}\lambda
		\xleftarrow{\alpha_2} \cdots.
	\end{equation}
\end{proposition}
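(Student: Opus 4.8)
The plan is to reduce the statement to two facts: that consecutive elements $x_m\lambda$ and $x_{m-1}\lambda$ are joined by a single simple reflection carrying the label predicted by \eqref{eq.Hasse}, and that the order of Definition \ref{order} restricted to $W\lambda$ is the total order in which $x_m\lambda > x_{m'}\lambda$ exactly when $m > m'$. First I would record, directly from the explicit expressions \eqref{xm}, the identity $x_{m-1}=r_{i_m}x_m$, where $i_m=2$ if $m$ is even and $i_m=1$ if $m$ is odd; this is a short computation using $r_ir_i=e$ in each of the four cases of \eqref{xm}. Applying this to $\lambda$ gives $x_{m-1}\lambda=r_{i_m}(x_m\lambda)$, and Lemma \ref{lem.xm} yields $\langle x_m\lambda,\alpha_{i_m}^\vee\rangle=-p_m$, which is strictly negative since $p_m>0$ for all $m$ by Corollary \ref{cor.pos}. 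Hence $x_m\lambda>x_{m-1}\lambda$ is a one-step relation in the sense of Definition \ref{order}, with reflecting root the simple root $\alpha_{i_m}$; note $\alpha_{i_m}=\alpha_2$ for even $m$ and $\alpha_1$ for odd $m$, matching the labels in \eqref{eq.Hasse}. Because $\langle x_m\lambda,\alpha_{i_m}^\vee\rangle\neq 0$, the reflection genuinely moves $x_m\lambda$, so the two weights are distinct and the inequality is strict.

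Next I would prove antisymmetry of the order on $W\lambda$, which is where the real work lies. For a single down-step $\mu_k=r_{\beta_k}(\mu_{k-1})$ with $\langle\mu_{k-1},\beta_k^\vee\rangle<0$ one has $\mu_k-\mu_{k-1}=-\langle\mu_{k-1},\beta_k^\vee\rangle\,\beta_k$, a positive integer multiple of the positive real root $\beta_k$; summing along a chain shows that $\mu>\nu$ forces $\nu-\mu\in Q^+\setminus\{0\}$, where $Q^+:=\Z_{\ge0}\alpha_1+\Z_{\ge0}\alpha_2$. Since $\det A=4-ab\neq 0$, the simple roots $\alpha_1,\alpha_2$ are linearly independent, so $Q^+$ is a pointed cone; consequently $\mu>\nu$ and $\nu>\mu$ cannot both hold, i.e.\ the order is antisymmetric. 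Combining this with the consecutive relations of the previous paragraph and the transitivity built into Definition \ref{order}, I obtain $x_m\lambda>x_{m'}\lambda$ whenever $m>m'$, so any two distinct elements of $W\lambda$ are comparable; antisymmetry then forbids any relation $x_{m'}\lambda>x_m\lambda$ with $m'<m$. Thus $\geq$ is exactly the total order $\cdots>x_2\lambda>x_1\lambda>x_0\lambda>x_{-1}\lambda>\cdots$ indexed by $m$.

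Finally I would translate this into the statement about the Hasse diagram. Any decreasing chain $\mu_0>\mu_1>\cdots>\mu_u$ in $W\lambda$ corresponds, under the order just identified, to a strictly decreasing sequence of indices; hence a chain from $x_m\lambda$ to $x_{m-1}\lambda$ can only have length $1$, giving $\mathrm{dist}(x_m\lambda,x_{m-1}\lambda)=1$, while for $m-m'\ge 2$ the longest chain has length $m-m'\ge 2$, so $\mathrm{dist}(x_m\lambda,x_{m'}\lambda)\neq 1$. Therefore the edges of the Hasse diagram are precisely the consecutive pairs $x_m\lambda\xleftarrow{\alpha_{i_m}}x_{m-1}\lambda$, with the labels computed above, which is exactly \eqref{eq.Hasse}. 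The main obstacle is the second paragraph: establishing that no ``long-range'' relations exist, so that the order genuinely collapses to the $\Z$-indexed chain. I expect the pointedness of $Q^+$ (equivalently the nondegeneracy of $A$) to be the decisive ingredient there, with the remaining steps being routine bookkeeping once the total order is in hand.
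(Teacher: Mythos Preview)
Your argument is correct and follows the same line as the paper's proof: both establish the cover relations $x_m\lambda>x_{m-1}\lambda$ via $r_{i_m}x_m\lambda=x_{m-1}\lambda$ and $\langle x_m\lambda,\alpha_{i_m}^\vee\rangle=-p_m<0$ from Lemma~\ref{lem.xm}, and then read off the Hasse diagram from the resulting total chain. The paper simply declares $\mathrm{dist}(x_m\lambda,x_{m-1}\lambda)=1$ to be ``obvious'' from the chain \eqref{eq.xmcontra}, whereas you supply the underlying justification (antisymmetry of $\geq$ via $\nu-\mu\in Q^+\setminus\{0\}$), so your write-up is strictly more detailed but not a different approach.
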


\begin{proof}
	For $m \in \Z$, we set
	\begin{equation*}
		i:=
		\begin{cases}
			2 & \text{if} \ m \ \text{is even}, \  \\
			1  & \text{if} \ m \ \text{is odd}. \
		\end{cases}
	\end{equation*}
	Since $r_ix_m\lambda=x_{m-1}\lambda$ and
	$\langle x_m\lambda, \alpha_i^\vee \rangle=-p_m<0$ for every $m \in \Z$ by Lemma \ref{lem.xm},
	we have $x_m\lambda > x_{m-1}\lambda$.
	Hence, we have
	\begin{equation}\label{eq.xmcontra}
		\cdots > x_2\lambda > x_1\lambda >
		x_0\lambda
		> x_{-1}\lambda > x_{-2}\lambda
		> \cdots;
	\end{equation}
	{it is obvious from \eqref{eq.xmcontra}
	that $\mathrm{dist}(x_{m}\lambda, x_{m-1}\lambda)=1$.}
		% Therefore, it suffices to show that $\mathrm{dist}(x_{m}\lambda, x_{m-1}\lambda)=1$.
	% Suppose, for a contradiction, that $\mathrm{dist}(x_{m}\lambda, x_{m-1}\lambda)\geq 2$.
	% Then there exists $m' \in \Z  \backslash \{ m, m-1\}$ such that
	% $x_{m}\lambda>x_{m'}\lambda>x_{m-1}\lambda$.
	% If $m'>m$ (resp., $m'<m-1$), then we obtain $x_{m'}\lambda>x_{m}\lambda$
	% (resp., $x_{m-1}\lambda>x_{m'}\lambda$)
	% by \eqref{eq.xmcontra}
	% which contradicts our assumption.
	Thus, we have proved the proposition.
\end{proof}
\section{Connectedness of the crystal graph of $\B(\lambda)$.}

\subsection{Main result.}\label{S.main}
\begin{theorem}\label{cor.main}
	Let $\lambda = k\Lambda_1-l\Lambda_2$ be an integral weight such that
	$W\lambda$ satisfies the condition \eqref{A};
	see also $\mathrm{Theorem}$ $\ref{thm.M}$.
	The crystal graph of $\B(\lambda)$ is connected if and only if
	$W\lambda$ contains $\lambda'=k'\Lambda_1-l'\Lambda_2$ with
	$k',  l' \in \Z$ satisfying $|k'|=1$ or $|l'|=1$.
	Otherwise, the crystal graph of  $\B(\lambda)$ has infinitely many connected
	components.
\end{theorem}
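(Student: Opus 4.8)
The plan is to work throughout in the combinatorial model furnished by Proposition \ref{prop.Hasse}. Since the Hasse diagram of $W\lambda$ is the bi-infinite chain \eqref{eq.Hasse}, the directions of any LS path are $\mu_k = x_{m_k}\lambda$ with $m_1 > \cdots > m_u$, and (because the covering step $x_j\lambda \to x_{j-1}\lambda$ is labelled by the simple root whose coroot pairs to $-p_j$ with $x_j\lambda$) a $\sigma$-chain for $(x_m\lambda, x_{m'}\lambda)$ exists exactly when $\sigma p_j \in \Z$ for every $j$ with $m' < j \le m$. I also record the reformulation, immediate from Lemma \ref{lem.xm}, that $W\lambda$ contains a weight with $|k'| = 1$ or $|l'| = 1$ if and only if $p_{m_0} = 1$ for some $m_0 \in \Z$. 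Write $\pi_\mu := (\mu; 0, 1)$ for the straight-line path of direction $\mu$. The first step, needed in both directions, is that all straight-line paths lie in one connected component $C_0$: for $\mu = x_m\lambda$ and the index $i$ on the edge leaving $x_m\lambda$ one has $\langle x_m\lambda, \alpha_i^\vee\rangle = -p_m < 0$, so $H^{\pi_\mu}_i$ is linear and strictly decreasing; Lemma \ref{IK} then gives $\iota(\e_i^{\mathrm{max}}\pi_{x_m\lambda}) = r_i x_m\lambda = x_{m-1}\lambda$, and a direct check of the definition of $\e_i$ shows $\e_i^{\mathrm{max}}\pi_{x_m\lambda} = \pi_{x_{m-1}\lambda}$, with $\f_i^{\mathrm{max}}\pi_{x_{m-1}\lambda} = \pi_{x_m\lambda}$ symmetrically. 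Hence consecutive straight paths are joined by an edge and $\{\pi_{x_m\lambda} \mid m \in \Z\} \subseteq C_0$.

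For the ``if'' direction, assume $p_{m_0} = 1$. The key is a barrier property: if some interior turning point of an LS path had its $\sigma$-chain interval contain $m_0$, i.e. $m_{k+1} < m_0 \le m_k$ for some $1 \le k \le u-1$, the chain condition would force $\sigma_k p_{m_0} = \sigma_k \in \Z$, contradicting $0 < \sigma_k < 1$. Consequently every LS path is one-sided: either all $m_k \ge m_0$ or all $m_k \le m_0 - 1$. On the side $m_k \ge m_0$ I then argue by induction on $m_1 - m_0$ that $\pi$ is connected to $\pi_{x_{m_0}\lambda}$: if $m_1 > m_0$, applying $\e_i^{\mathrm{max}}$ for the edge-index $i$ at $x_{m_1}\lambda$ moves the top direction to $x_{m_1 - 1}\lambda$ by Lemma \ref{IK}; the result is again a valid LS path, which by the barrier cannot acquire a direction below $m_0$ and hence stays one-sided, strictly lowering $m_1$; when $m_1 = m_0$ the path is forced to be $\pi_{x_{m_0}\lambda}$. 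The side $m_k \le m_0 - 1$ is handled symmetrically, connecting to $\pi_{x_{m_0 - 1}\lambda}$. Since both boundary straight paths lie in $C_0$, we get $\B(\lambda) = C_0$, which is connected.

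The ``only if'' direction is the heart of the matter and the step I expect to be the main obstacle. Assuming $p_m \ge 2$ for all $m$, there is no barrier index, so turning points may straddle every region of the chain, and $C_0$ already contains genuinely bent paths (for instance $\e_i\pi_{x_m\lambda}$); thus neither ``is bent'' nor ``crosses a fixed direction'' can separate components. The goal is instead to produce a quantity $\Theta \colon \B(\lambda) \to \Z_{\ge 0}$ that is constant on connected components and unbounded on $\B(\lambda)$, which immediately yields infinitely many components. The difficulty is that $\e_i$ and $\f_i$ are defined by reflecting a sub-arc and then translating by $\pm \alpha_i$, so they can create and destroy turning points and break every naive linear or quadratic (e.g. bilinear-form) invariant; hence $\Theta$ must be a subtler combinatorial quantity, read off from the local-minimum structure of the functions $H^\pi_i$ or, equivalently, from a ``minimal-complexity representative'' of each component. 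Concretely, the approach I would pursue is to isolate the paths $\pi$ that are simultaneously annihilated by one raising and one lowering operator (such as $\e_2\pi = \f_1\pi = \mathbf{0}$), to show that each connected component contains such a distinguished ``source-like'' path whose number of direction-pieces $u$ is a well-defined invariant of the component, and finally to exhibit, using $p_{m_0} = d \ge 2$ to make the straddling $\sigma$-chains at $m_0$ realizable, an explicit family $\{\pi_n\}_{n \ge 1}$ of such paths with strictly increasing $u$, no two of which are connected. Verifying both the invariance of this complexity and the non-connectedness of the family against the explicit piecewise-linear surgery defining $\e_i, \f_i$ is the principal technical hurdle.
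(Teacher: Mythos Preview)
Your treatment of the ``if'' direction is essentially the paper's: the barrier argument (that $p_{m_0}=1$ prevents any interior $\sigma$-chain from straddling level $m_0$) is exactly Proposition \ref{LS2}, and the inductive reduction via Lemma \ref{IK} is Theorem \ref{thm.LScon}. Your preliminary observation that all straight-line paths lie in one component is correct and harmless, though the paper gets by without it.

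The ``only if'' direction, however, has a genuine gap. You correctly recognise that an invariant of the type ``minimal number of direction pieces in the component'' is what is wanted, but you do not have the construction that makes this work, and your proposed route (isolate paths with $\e_2\pi=\f_1\pi=\mathbf{0}$ and show $u$ is well-defined on them) is not developed and would be hard to carry out directly, since root operators can both create and destroy turning points even among such paths. The paper's argument proceeds quite differently and rests on two ideas you are missing. First, it splits according to whether $\gcd(k,l)=1$. In the coprime case the paper builds an auxiliary integer sequence $\{q_m\}$ (with $q_0=1$ and $q_1$ the integer part of $k/l$) satisfying the same recursion as $\{p_m\}$, proves $0<q_m<p_m$ and $q_m p_{m+1}-q_{m+1}p_m=k-lc$ is constant, and then defines a combinatorial condition $C(m,n)$ on LS paths (roughly: a block of $2n$ consecutive turning points sits at the specific rationals $q_j/p_j$). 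A careful local analysis (Lemma \ref{lem.frag}) shows that on such a block the height function $H_j^\pi$ oscillates strictly within an open unit interval, which is exactly what forces $\e_i,\f_i$ to act either entirely before or entirely after the block, preserving $C(\cdot,n)$ (Proposition \ref{thm.cmn}). An explicit family $\pi^{(n)}$ of length $2n+1$ then gives infinitely many components. Second, in the non-coprime case the paper reduces to $d=2$ via the concatenation embedding $\B(d\lambda')\subset\B(\lambda')^{\ast d}$, and for $d=2$ uses a completely different invariant: the size of the unique ``big jump'' (if any) at $\sigma=1/2$, which exists because any $\sigma$-chain of length $\ge 2$ forces $\sigma=1/2$ (Lemma \ref{lem.o1}).

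In short, your outline correctly identifies the \emph{shape} of the desired invariant but not its \emph{content}; the actual constructions (the $q_m$-sequence with its arithmetic identity in the coprime case, and the $\sigma=1/2$ jump-size in the non-coprime case) are the substance of the proof and are absent from your proposal.
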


We will prove Theorem \ref{cor.main} by showing the following three propositions;
in these propositions,  $\lambda = k\Lambda_1-l\Lambda_2$ is an integral weight such that
$W\lambda$ satisfies the condition \eqref{A}.

\begin{proposition}[will be proved in \S \ref{S.prf.main1}]\label{thm.main1}
	The crystal graph of $\B(\lambda)$ is connected
	if either
	\eqref{eq.m1} or \eqref{eq.m2} holds$:$
	\begin{equation}\label{eq.m1}
		l=1 \text{ and } 1 \leq k< a-1;
	\end{equation}
	\begin{equation}\label{eq.m2}
		k=1 \text{ and } 1 < l\leq b-1.
	\end{equation}
\end{proposition}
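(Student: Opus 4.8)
The plan is to reduce every LS path to a straight-line path by iterated root operators, exploiting a single ``bottleneck'' in the sequence $\{p_m\}$ created by the hypothesis. Since the two conditions \eqref{eq.m1} and \eqref{eq.m2} are interchanged by swapping the indices $1\leftrightarrow 2$, the off-diagonal entries $a\leftrightarrow b$, and $(k,l)\leftrightarrow(l,k)$, it suffices to treat \eqref{eq.m1}. Under \eqref{eq.m1} we have $l=1$, so $p_0=1$ by \eqref{eq.pm}, while $p_m>0$ for all $m$ by Corollary \ref{cor.pos} (as $W\lambda$ satisfies \eqref{A}); the remaining inequalities $1\le k<a-1$ only serve to place $\lambda$ in case (i) of Theorem \ref{thm.M}, so that Proposition \ref{prop.Hasse} applies and the Hasse diagram is the bi-infinite chain \eqref{eq.Hasse}. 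First I would record the explicit combinatorial description of $\B(\lambda)$ coming from this chain: an LS path is the datum of integers $m_1>\cdots>m_u$ and rationals $0=\sigma_0<\cdots<\sigma_u=1$ such that, for each $1\le j\le u-1$, a $\sigma_j$-chain for $(x_{m_j}\lambda,x_{m_{j+1}}\lambda)$ exists, which by Lemma \ref{lem.xm} and Proposition \ref{prop.Hasse} is equivalent to $\sigma_j p_m\in\Z_{>0}$ for every $m$ with $m_{j+1}<m\le m_j$.

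The key structural step (a ``wall lemma'') is the following. For an internal index $j$ one has $0<\sigma_j<1$, so $\sigma_j p_0=\sigma_j\notin\Z$; hence no $\sigma_j$-chain can pass through the index $m=0$, i.e. one never has $m_{j+1}\le -1$ and $m_j\ge 0$ simultaneously. Since $m_1>\cdots>m_u$ is strictly decreasing, this forces every LS path to lie entirely on one side of the wall: either all $m_t\ge 0$, or all $m_t\le -1$. I would prove this directly from the combinatorial description above; it is here that the hypothesis $l=1$, equivalently $p_0=1$, is used.

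With the wall lemma in hand, connectedness follows by a descent argument. Consider a path $\pi$ with all $m_t\ge 0$ and top index $m_1\ge 1$, and let $i$ be the simple index with $\langle x_{m_1}\lambda,\alpha_i^\vee\rangle=-p_{m_1}<0$. Then $\varepsilon_i(\pi)\ge 1$, and by Lemma \ref{IK} the path $\e_i^{\mathrm{max}}\pi$ has $\iota(\e_i^{\mathrm{max}}\pi)=r_i x_{m_1}\lambda=x_{m_1-1}\lambda$; being again an LS path it obeys the wall lemma, and since its top index is $m_1-1\ge 0$ it still lies on the side ``all $\ge 0$''. Thus each such operator lowers the top index by exactly one while keeping $\pi$ on the nonnegative side; as the top index is a nonnegative integer, after finitely many steps it reaches $0$, which by the wall lemma forces $u=1$, i.e. the straight-line path $(\lambda;0,1)$. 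Symmetrically, using $\f_i^{\mathrm{max}}$ and Lemma \ref{IK} applied to $\kappa(\pi)$, any path with all $m_t\le -1$ is carried to the straight-line path $(x_{-1}\lambda;0,1)$. Finally, because $p_0=1$ we have $\langle\lambda,\alpha_i^\vee\rangle=-1$ for the index $i$ with $r_i\lambda=x_{-1}\lambda$, so a single $\e_i$ sends $(\lambda;0,1)$ to $(x_{-1}\lambda;0,1)$; and the same descent applied to a straight-line path $(x_m\lambda;0,1)$ shows it connects to one of these two. Hence the whole crystal graph is connected.

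The main obstacle is this descent step: a priori $\e_i^{\mathrm{max}}$ can reshape the lower segments of $\pi$ in a complicated way, so one cannot control the full image by direct computation. The point is that Lemma \ref{IK} lets me track only the initial direction $\iota(\pi)$, while the wall lemma—which is exactly what the bottleneck $p_0=1$ buys—is what prevents the top index from drifting off to $-\infty$, thereby guaranteeing that the descent terminates at a straight-line path rather than running forever. Verifying the wall lemma and the ``stays on one side'' claim from the combinatorial description of LS paths is therefore the technical heart of the argument.
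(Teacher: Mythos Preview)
Your proposal is correct and follows essentially the same approach as the paper: a wall lemma (Proposition~\ref{LS2}) using $p_0=l=1$ to split $\B(\lambda)$ into paths with all indices $\ge 0$ or all $\le -1$, followed by a descent via Lemma~\ref{IK} to reach the straight-line path (Theorem~\ref{thm.LScon}). The only notable difference is that the paper first invokes Lemma~\ref{lem.prime}\,(3) to reduce to LS paths with \emph{consecutive} indices $x_m\lambda,x_{m-1}\lambda,\ldots,x_n\lambda$, whereas you state the wall lemma directly for arbitrary index gaps $m_1>\cdots>m_u$; your formulation is slightly more economical since the chain condition $\sigma_j p_0\in\Z$ already forces the obstruction without first knowing the gaps are $1$.
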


\begin{proposition}[will be proved in \S \ref{S.prf.main2}]\label{thm.main2}
	The crystal graph of $\B(\lambda)$  has infinitely many connected
	components
	if $k$ and $l$ are relatively prime, and
	either $(\ref{eq.m3})$ or $(\ref{eq.m4})$ holds${:}$
	\begin{equation}\label{eq.m3}
		1<l<k< (a-1)l;
	\end{equation}
	\begin{equation}\label{eq.m4}
		1<k<l< (b-1)k.
	\end{equation}
\end{proposition}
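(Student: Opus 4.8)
The plan is to exploit the coprimality of $k$ and $l$ to pin down the shape of every LS path of shape $\lambda$, and then to separate connected components by an invariant that survives every application of $\e_i$ and $\f_i$. First I would record the arithmetic fact that consecutive terms of the sequence $\{p_m\}$ are coprime: from \eqref{eq.pm}--\eqref{eq.pm2} one has $p_{m+2}\equiv -p_m \pmod{p_{m+1}}$, so $\gcd(p_{m+1},p_{m+2})=\gcd(p_m,p_{m+1})$, and hence $\gcd(p_m,p_{m+1})=\gcd(l,k)=1$ for all $m$. Using this I would prove the key structural lemma: every $\pi=(\mu_1,\ldots,\mu_u;\sigma_0,\ldots,\sigma_u)\in\B(\lambda)$ has \emph{consecutive} directions, $\mu_j=x_{M-j+1}\lambda$ where $x_M\lambda=\iota(\pi)$. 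Indeed, a $\sigma_k$-chain for $(\mu_k,\mu_{k+1})$ runs along the Hasse diagram \eqref{eq.Hasse} through steps $x_q\lambda\to x_{q-1}\lambda$ with $\langle x_q\lambda,\alpha_{i}^{\vee}\rangle=-p_q$, forcing $\sigma_k p_q\in\Z_{>0}$ for every index $q$ in the range; writing $\sigma_k$ in lowest terms with denominator $f\geq 2$, a jump of distance $\geq 2$ would give $f\mid\gcd(p_{q-1},p_q)=1$, a contradiction. So every jump has distance $1$.

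This reduces $\B(\lambda)$ to an explicit combinatorial model: a path is the datum of top and bottom indices $M\geq B$ together with integers $n_p\in\{1,\ldots,p_p-1\}$ (one per turn, $B<p\leq M$) subject to $0<n_M/p_M<\cdots<n_{B+1}/p_{B+1}<1$, where $n_p=\sigma^{(p)}p_p$ for the turn below $x_p\lambda$. A telescoping computation using $x_{p-1}\lambda=x_p\lambda+p_p\alpha_{i(p)}$ (with $i(p)=2$ for $p$ even and $1$ for $p$ odd) then yields the clean formula $\mathrm{wt}(\pi)=x_B\lambda-\sum_{p=B+1}^{M}n_p\,\alpha_{i(p)}$. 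In particular every weight of $\B(\lambda)$ lies in the single coset $x_0\lambda+(\Z\alpha_1+\Z\alpha_2)$, so the crude residue-modulo-root-lattice invariant is useless here and a genuinely finer invariant is needed.

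Next I would describe $\e_i,\f_i$ in this model. Along a path, $H_i^{\pi}(t)$ is a zigzag whose slope in direction $x_p\lambda$ is $-p_p$ when $i=i(p)$ and $+p_{p+1}$ otherwise (by Lemma \ref{lem.xm}); since consecutive directions have opposite parity, these slopes alternate in sign, so the local extrema sit exactly at the turns. Consequently $\e_i$ and $\f_i$ act by reflecting a contiguous block of directions through $r_i$ (which shifts those indices by $\pm1$ along the chain) and translating the tail, preserving the consecutive-direction structure. By Lemma \ref{IK}, $\e_i^{\mathrm{max}}$ and $\f_i^{\mathrm{max}}$ carry the straight path $(x_m\lambda;0,1)$ to the adjacent straight paths $(x_{m-1}\lambda;0,1)$ and $(x_{m+1}\lambda;0,1)$, so all straight paths lie in one component $C_0$; the assertion to prove is that there are infinitely many components besides $C_0$. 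My plan is to construct an explicit infinite family $\pi^{(N)}$ of LS paths — increasingly wide balanced zigzags whose weights are forced into the common coset (indeed into the weight support of $C_0$) — together with a component invariant $\Phi$, verified to be unchanged by every single $\e_i,\f_i$ via the block-reflection description, with $\Phi(\pi^{(N)})$ pairwise distinct.

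The main obstacle is precisely the construction and verification of $\Phi$. Because $\lambda$ is neither dominant nor antidominant, no LS path is annihilated by both $\e_1$ and $\e_2$: the initial segment always has $H_{i(M)}^{\pi}$ decreasing from $0$, so $m_{i(M)}^{\pi}<0$ and $\e_{i(M)}\pi\neq\mathbf{0}$. Thus components possess no highest- or lowest-weight vertex to serve as a canonical label, and $\Phi$ must instead be read off the global geometry of the zigzag — for instance a suitably normalized extremal value of $H_i^{\pi}$, or the $W$-data of the deepest turn — and shown to be constant along every edge of the crystal graph. Establishing this constancy means tracking how each $\e_i/\f_i$ relocates the reflected block and how this interacts with the chain of inequalities $n_M/p_M<\cdots<n_{B+1}/p_{B+1}$; this bookkeeping, together with producing infinitely many distinct values of $\Phi$, is where the real work lies. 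Here the hypotheses \eqref{eq.m3} and \eqref{eq.m4} enter decisively: they are equivalent (via Remark \ref{rem.orbit}) to $p_m\geq 2$ for all $m$, i.e. no $x_m\lambda$ has a coordinate of absolute value $1$, which is exactly what guarantees $C_0\neq\B(\lambda)$ and that the family $\pi^{(N)}$ never collapses into $C_0$.
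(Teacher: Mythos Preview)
Your preliminary reductions are correct and in fact coincide with the paper's Lemma \ref{lem.prime}: coprimality of consecutive $p_m$, the consequence that every $\sigma$-chain has length $1$, and hence that every LS path has consecutive directions $x_{M}\lambda,\ldots,x_{B}\lambda$. Your description of the zigzag behaviour of $H_i^\pi$ and of how $\e_i,\f_i$ act on a single block is also accurate.

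The gap is that you stop exactly where the proof begins. You say the main obstacle is ``the construction and verification of $\Phi$'' and that this bookkeeping ``is where the real work lies'' --- but you never construct $\Phi$ or the family $\pi^{(N)}$, and your two tentative candidates (a normalized extremal value of $H_i^\pi$, or the $W$-datum of the deepest turn) do not obviously survive a single $\e_i$ or $\f_i$: both the minimum of $H_i^\pi$ and the location of the bottom turn genuinely move under root operators, and you give no normalization that cancels this. So as it stands this is a plan, not a proof.

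What the paper actually does is supply the missing invariant explicitly, and it is not a single number read off one turn but a \emph{rigid block of turns at prescribed rational times}. Choose the unique $c\in\{1,\ldots,k-1\}$ with $c/k<1/l<(c+1)/k$ and define an auxiliary sequence $\{q_m\}$ by the same recursion as $\{p_m\}$ but with $q_0=1$, $q_1=c$; one checks $0<q_m<p_m$ and $q_mp_{m+1}-q_{m+1}p_m=k-lc$ for all $m$, whence $q_{m+1}/p_{m+1}<q_m/p_m$. Say $\pi$ satisfies condition $C(m,n)$ if somewhere in the middle of $\pi$ there are $2n$ consecutive turns at the exact times $q_{m+n}/p_{m+n},\ldots,q_{m-n+1}/p_{m-n+1}$. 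The key computation (the paper's Lemma \ref{lem.frag}) is that along this block the values $H_j^\pi(\sigma_r)$ all lie in a half-open interval of length $1$, the minima being equal integers and the maxima strictly between them and the next integer; this forces the $t_0,t_1$ of any $\e_i$ or $\f_i$ either to miss the block entirely or to nibble exactly one edge of it, so that $\e_i\pi$ satisfies $C(m,n)$ or $C(m-1,n)$ and $\f_i\pi$ satisfies $C(m,n)$ or $C(m+1,n)$. Thus $n$ itself is the invariant, and the explicit path $\pi^{(n)}$ with turns at $q_n/p_n,\ldots,q_{-n+1}/p_{-n+1}$ has $\ell(\pi^{(n)})=2n+1$, while every path in its component has $\ell\geq 2n+1$; this separates the components. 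The step you are missing is precisely the discovery of $\{q_m\}$ and the interval-of-length-$<1$ estimate that pins the block in place.
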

	\begin{proposition}[will be proved in \S \ref{S.prf.main3}]\label{thm.main3}
		The crystal graph of $\B(\lambda)$
		has infinitely many connected
		components
		if $k$ and $l$ are not relatively prime, {and
		either $(\ref{eq.m3})$ or $(\ref{eq.m4})$ holds}.
	\end{proposition}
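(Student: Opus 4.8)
The plan is to exploit the one feature that distinguishes Proposition~\ref{thm.main3} from Proposition~\ref{thm.main2}, namely that $d:=\gcd(k,l)\ge 2$, and to produce an explicit infinite family of LS paths lying in pairwise distinct connected components.

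First I would reduce to the case \eqref{eq.m3}: the diagram symmetry interchanging the two nodes (equivalently $a\leftrightarrow b$, $\Lambda_1\leftrightarrow\Lambda_2$, $\alpha_1\leftrightarrow\alpha_2$) induces a crystal isomorphism carrying shape \eqref{eq.m4} to shape \eqref{eq.m3}, so it suffices to assume $1<l<k<(a-1)l$. Next I would record the arithmetic input: from the recurrences \eqref{eq.pm} and \eqref{eq.pm2} one checks that $\gcd(p_m,p_{m+1})=\gcd(p_0,p_1)=\gcd(l,k)=d$ for every $m\in\Z$, so $d\mid p_m$ for all $m$, and by Remark~\ref{rem.orbit} the sequence satisfies $p_m\ge p_0=l\ge 2$, so no $p_m$ equals $1$. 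The crucial consequence is that $\sigma=1/d\in(0,1)$ is admissible in every $\sigma$-chain: since along the Hasse diagram \eqref{eq.Hasse} each downward edge leaving $x_j\lambda$ is labelled by the simple root $\alpha_i$ with $\langle x_j\lambda,\alpha_i^\vee\rangle=-p_j$ (Lemma~\ref{lem.xm}, Proposition~\ref{prop.Hasse}) and $d\mid p_j$, the $\sigma$-chain condition $\tfrac1d\langle x_j\lambda,\alpha_i^\vee\rangle=-p_j/d\in\Z_{<0}$ holds at every step. This fails in the coprime setting of Proposition~\ref{thm.main2}, which is precisely why the two cases are separated.

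Using this I would build, for each $n\ge 1$, an LS path with the single interior breakpoint $\sigma=1/d$ whose initial and final directions run off to infinity along \eqref{eq.Hasse}, e.g. $\pi^{(n)}=(x_n\lambda,\,x_{-n}\lambda;\,0,\tfrac1d,1)$, which lies in $\B(\lambda)$ by the observation above together with $x_n\lambda>x_{-n}\lambda$ (Proposition~\ref{prop.Hasse}). The heart of the argument is to separate these paths by a connected-component invariant $\Phi$. Here I expect the main obstacle: no invariant that factors through $\mathrm{wt}(\pi)$ can work, because $Q$ has finite index in $P$ and $\e_i,\f_i$ translate the weight by $\pm\alpha_i$, so any weight-determined invariant takes only finitely many values; thus $\Phi$ must be genuinely path-theoretic. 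My candidate is the common squared length $(\nu,\nu)$ of the extremal weights of the component of $\pi$, computed with the $W$-invariant symmetric bilinear form on $\mathfrak h^*$ (of signature $(1,1)$ in the hyperbolic rank-$2$ case): since the extremal weights of a component form a single $W$-orbit and the form is $W$-invariant, this number is a connected-component invariant, and for the extremal component $C_0$ containing the straight-line paths $\{(x_m\lambda;0,1)\}$ it equals $(\lambda,\lambda)$.

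To finish I would show that the components of the $\pi^{(n)}$ realise infinitely many values of $\Phi$. Concretely I would locate inside the component of each $\pi^{(n)}$ an extremal element by applying $\e_i^{\mathrm{max}}$ and $\f_i^{\mathrm{max}}$ (tracking $\iota$ and $\kappa$ via Lemma~\ref{IK}), compute its weight $\nu^{(n)}$, and verify $(\nu^{(n)},\nu^{(n)})\to\infty$ as $n\to\infty$; heuristically this rests on the fact that the inner product of the two extremal directions $x_n\lambda$ and $x_{-n}\lambda$ grows without bound as they separate toward opposite null rays of the form. The two steps I expect to be genuinely delicate are (i) verifying that every connected component here is of extremal type, so that $(\nu,\nu)$ is well defined, and (ii) controlling how $\e_i,\f_i$ reshape a path carrying a $1/d$-breakpoint, which is what makes the extremal-element computation explicit. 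As a complementary check, when $\gcd(k,l)=d$ and both $k/d,l/d>1$ one may instead transport infinitely many components from $\B(\tfrac1d\lambda)$ (which falls under Proposition~\ref{thm.main2}) through the dilation $\pi_0\mapsto d\,\pi_0$; but this shortcut breaks down exactly when $k/d=1$ or $l/d=1$, i.e. when $\tfrac1d\lambda$ is of the connected type in Proposition~\ref{thm.main1}, so the direct construction above is needed to cover all of Proposition~\ref{thm.main3}.
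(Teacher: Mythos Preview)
Your construction of the paths $\pi^{(n)}=(x_n\lambda,x_{-n}\lambda;0,1/d,1)$ is correct and is essentially the family the paper exploits as well (after a reduction to $d=2$). The real gap is in your separating invariant $\Phi$. The assertion that ``the extremal weights of a component form a single $W$-orbit'' is precisely what is \emph{not} available here: that statement is a theorem for crystal bases of extremal weight modules, but identifying the LS-path crystal $\B(\lambda)$ with such a crystal basis, for $\lambda$ satisfying \eqref{A}, is the open problem motivating the paper (cf.\ \cite{S-Yu}). Nor do you know that a given component contains any extremal element at all, or that iterating $\e_i^{\mathrm{max}},\f_i^{\mathrm{max}}$ terminates; in closely related settings (affine level zero) it does not. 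So $\Phi$ is not well defined as stated, the heuristic $(\nu^{(n)},\nu^{(n)})\to\infty$ has nothing to stand on, and the two steps you yourself flag as ``genuinely delicate'' are in fact the entire content of the argument.

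The paper bypasses all of this by using a purely combinatorial invariant read off the LS-path data. It first reduces from $d$ to $d=2$ via the projection $\B(\lambda')^{\otimes d}\to\B(\lambda')^{\otimes 2}$, $\pi_1\otimes\cdots\otimes\pi_d\mapsto\pi_1\otimes\pi_2$, restricted to $\B(d\lambda')\to\B(2\lambda')$; if the target has infinitely many components, so does the source. For $d=2$ it shows (Lemma~\ref{lem.o1}) that a $\sigma$-chain across a jump $x_m\lambda>x_n\lambda$ with $m-n\ge 2$ forces $\sigma=1/2$, so each path carries at most one ``long'' jump; the invariant is the size of that jump (zero if none), giving a partition $\B(\lambda)=\bigsqcup_{r\ge 0}\B_r(\lambda)$ (Remark~\ref{rem.o}). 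Stability of each $\B_r(\lambda)$ under $\e_i,\f_i$ is then verified by a direct case analysis of where $t_0,t_1$ fall relative to $\sigma_{s-1},\sigma_s,\sigma_{s+1}$ (Theorem~\ref{thm.Om}), using integrality of $H_j^\pi(1/2)$ (Lemma~\ref{lem.o2}). In this language your $\pi^{(n)}$ (with $d=2$) has jump $2n$ and lies in $\B_n(\lambda)$, so the $\pi^{(n)}$ are separated immediately---no appeal to extremal weights or the bilinear form is needed.
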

\subsection{Proof of Proposition \ref{thm.main1}.}\label{S.prf.main1}
\begin{lemma}[cf. {\cite[Lemma 4.1.1, Theorem 4.1.2]{Yu}}]\label{lem.prime}
	Assume that $\lambda=k\Lambda_1-l\Lambda_2 \in P$ is of the form
	either $\mathrm{(i)}$ or $\mathrm{(ii)}$ in $\mathrm{Theorem}$ $\ref{thm.M}$$;$
	recall that $W\lambda$ satisfies the condition \eqref{A}.
	In addition, assume that $k$ and $l$ are relatively prime.\\
	$(1)$
	% \begin{itemize}
	% 	\item[$(1)$]
			For every  $m \in \Z $, the numbers $p_m$ and $p_{m+1}$
			(defined by \eqref{eq.pm} and \eqref{eq.pm2} for $\lambda$)
			are relatively prime.\\
		% \item[$(2)$]
		$(2)$
			Let $0<\sigma <1$ be a rational number,
			and let $\mu, \nu \in W\lambda $ be such that $\mu > \nu $.
			If $\mu =\mu_0> \mu_1> \cdots > \mu_s=\nu $ is
			a $\sigma$-chain for $(\mu ,\nu)$, then $s=1$.\\
		% \item[$(3)$]
		$(3)$
			An LS path $\pi $ of shape $\lambda$ is of the form
			\begin{equation}\label{eq.form}
				(x_{m+s-1}\lambda, \ldots, x_{m+1}\lambda, x_m\lambda;
				\sigma_0, \sigma_1, \ldots, \sigma_s ),
			\end{equation}
			where $m\in \Z$, $s \geq0 $, and $0=\sigma_0< \sigma_1< \cdots< \sigma_s=1 $
			satisfy the condition that $p_{m+s-v}\sigma_v \in \Z$
			for $1 \leq v \leq s-1$.
	% \end{itemize}
\end{lemma}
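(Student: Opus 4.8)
The plan is to handle the three parts in order, since each feeds into the next: part~(1) is a purely number-theoretic induction, part~(2) converts that coprimality into a rigidity statement about $\sigma$-chains by way of the Hasse diagram from Proposition~\ref{prop.Hasse}, and part~(3) is then essentially bookkeeping built on part~(2).

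For part~(1) I would induct on $|m|$, with base case $\gcd(p_0,p_1)=\gcd(l,k)=1$, which holds by the standing hypothesis that $k$ and $l$ are coprime. The recursions \eqref{eq.pm} and \eqref{eq.pm2} write each term as $a'p_{m+1}-p_m$ (resp.\ $a'p_{m+1}-p_{m+2}$) with $a'\in\{a,b\}$, so that $\gcd(p_{m+1},p_{m+2})=\gcd(p_{m+1},\,a'p_{m+1}-p_m)=\gcd(p_m,p_{m+1})$; thus the gcd of consecutive terms is an invariant of the recursion and propagates in both directions from the base case, giving $\gcd(p_m,p_{m+1})=1$ for all $m\in\Z$.

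For part~(2) I would first invoke Proposition~\ref{prop.Hasse}. Since $\mathrm{dist}(\mu_{j-1},\mu_j)=1$ at every step, consecutive elements of the $\sigma$-chain are adjacent in the Hasse diagram, so the chain is $\mu_j=x_{N-j}\lambda$ for some $N$ and $j=0,1,\ldots,s$. By Lemma~\ref{lem.xm} and the computation in the proof of Proposition~\ref{prop.Hasse}, the unique positive real root $\beta_j$ with $\mu_j=r_{\beta_j}(\mu_{j-1})$ is a simple root, and $\langle\mu_{j-1},\beta_j^{\vee}\rangle=\langle x_{N-j+1}\lambda,\alpha_i^{\vee}\rangle=-p_{N-j+1}$. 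Hence the defining condition $\sigma\langle\mu_{j-1},\beta_j^{\vee}\rangle\in\Z_{<0}$ reads $\sigma p_{N-j+1}\in\Z_{>0}$ for every $j$. If $s\ge 2$, then both $\sigma p_N$ and $\sigma p_{N-1}$ are integers; writing $\sigma=q/r$ in lowest terms shows $r$ divides $\gcd(p_N,p_{N-1})=1$ by part~(1), forcing $r=1$, which contradicts $0<\sigma<1$. Therefore $s=1$.

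For part~(3), recall that an LS path $\pi=(\mu_1,\ldots,\mu_u;\sigma_0,\ldots,\sigma_u)$ is equipped with a $\sigma_k$-chain for each pair $(\mu_k,\mu_{k+1})$. By part~(2) every such chain has length one, so $\mathrm{dist}(\mu_k,\mu_{k+1})=1$ and the $\mu_k$ are consecutive vertices of the Hasse diagram; writing them as $x_{m+s-1}\lambda,\ldots,x_m\lambda$ with $s=u$ yields the asserted shape. For the rational data, the length-one $\sigma_k$-chain condition for $(\mu_k,\mu_{k+1})=(x_{m+s-k}\lambda,\,x_{m+s-k-1}\lambda)$ is $\sigma_k\langle x_{m+s-k}\lambda,\alpha_i^{\vee}\rangle\in\Z_{<0}$, i.e.\ $\sigma_k p_{m+s-k}\in\Z$, which is exactly $p_{m+s-v}\sigma_v\in\Z$ for $1\le v\le s-1$ upon setting $v=k$. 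I expect part~(2) to be the main obstacle: one must read off correctly from Proposition~\ref{prop.Hasse} that the steps of a $\sigma$-chain are forced to be simple-reflection steps between adjacent $x_n\lambda$, and then combine the coprimality of part~(1) with the arithmetic of the rational number $\sigma$ to exclude chains of length $\ge 2$. Parts~(1) and~(3) are routine once~(2) is established.
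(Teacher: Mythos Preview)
Your proposal is correct and follows essentially the same approach as the paper: part~(1) by induction on $|m|$ using the recursion, part~(2) by contradiction via Proposition~\ref{prop.Hasse} to identify the chain as consecutive $x_m\lambda$'s and then using the coprimality of $p_m,p_{m-1}$ to force $\sigma\in\Z$, and part~(3) as an immediate consequence of~(2) and the definition of an LS path. The paper's proof is terser but the arguments are the same.
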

\begin{proof}
	$(1)$
	It can be easily shown by induction on $|m|$.\\
	$(2)$
	Suppose, for a contradiction, that $s \geq 2$.
	Since $\mathrm{dist}(\mu_{v-1}, \mu_{v})=1$ for every $v=1,2,\ldots, s$
	by the definition of a $\sigma$-chain, it follows from Proposition \ref{prop.Hasse}
	that there exists $m\in \Z$ such that
	$\mu_v = x_{m-v}\lambda$ for $v= 0, 1, \ldots, s$.
	We set
	\begin{equation*}
		i:=
		\begin{cases}
			2 & \text{if} \ m \ \text{is even}, \  \\
			1  & \text{if} \ m \ \text{is odd}, \
		\end{cases}
		\quad
		j:=
		\begin{cases}
			1 & \text{if} \ m \ \text{is even}, \  \\
			2  & \text{if} \ m \ \text{is odd}. \
		\end{cases}
	\end{equation*}
	By Lemma \ref{lem.xm}, we see that
	$\langle x_m\lambda ,\alpha_i^\vee \rangle=-p_m, \,
	\langle x_{m-1}\lambda ,\alpha_j^\vee \rangle =-p_{m-1}$.
	Since $-p_m$ and $-p_{m-1}$ are relatively prime by part $(1)$,
	there does not exist  $0<\sigma<1$ such that
	$\sigma \langle x_m\lambda ,\alpha_i^\vee \rangle =-\sigma p_m \in \Z_{<0}$ and
	$\sigma \langle x_{m-1}\lambda ,\alpha_j^\vee \rangle =-\sigma p_{m-1} \in \Z_{<0}$.
	This contradicts the assumption that the sequence is a $\sigma$-chain.\\
	$(3)$
	It follows immediately from the definition of an LS path and part $(2)$.
\end{proof}
\begin{remark}\label{rem.alt}
Let
	$\pi = (\nu_1, \nu_2, \ldots, \nu_s;
	\sigma_0, \sigma_1, \ldots, \sigma_s )$
	% $\pi = (x_{m+s-1}\lambda, \ldots, x_{m+1}\lambda, x_m\lambda;
	% \sigma_0, \sigma_1, \ldots, \sigma_s )$
	be an LS path of shape $\lambda$
	satisfying the assumptions of Lemma \ref{lem.prime}.
	By Lemma \ref{lem.xm} and Lemma \ref{lem.prime} (3), we see that
	the function $H_i^\pi(t)$ for $i \in I$ attains its minimal
	and maximal values at $t=\sigma_u$, $u=0, 1, \ldots, s$, alternately.
	Namely,
	if $H_i^\pi(t)$ for $i\in I$ attains a minimal (resp., maximal) value at
	$t=\sigma_v$, then $H_i^\pi(t)$
	attains a minimal (resp., maximal) value at $t=\sigma_u$ for all  $u=0, 1, \ldots, s$
	such that $u  \equiv v$ mod $2$.
\end{remark}
% Therefore, by \eqref{int} we see that $H_i^\pi(t)\in \Z$ for at $t=\sigma_u$ for all  $u=0, 1, \ldots, s$
% such that $u \equiv v$ mod $2$.

{In the remainder of this subsection, we give the proof of Proposition \ref{thm.main1}
for the case that $\lambda$ satisfies \eqref{eq.m1};
the other cases are left to the reader.
Our proof is adapted from \cite[Proof of Theorem 3.2.1]{Yu}.}
	% We set $\mu:=k\Lambda_1-\Lambda_2$ with $1\leq k<a-1$, and
	% $\mu':=\Lambda_1-l\Lambda_2$ with $1<l\leq b-1$.
	% We give a proof for the connectedness of the crystal graph only for $\B(\mu)$
	% which is essentially the same as the one for \cite[Theorem 3.2.1]{Yu}.

\begin{proposition}\label{LS2}
Let $\pi \in \mathbb{B}({\lambda})$,
and write it as (see $\mathrm{Lemma}$ $\ref{lem.prime}$ $(3)$)$:$
\begin{equation}\label{eq.LS1}
	\pi=(x_m{\lambda}, x_{m-1}{\lambda}{,}  \ldots ,  x_{n+1}{\lambda} , x_{n}{\lambda};
	\sigma_0, \ldots, \sigma_{m-n+1})
\end{equation}
for some $n\leq m$ and $0=\sigma_0< \cdots< \sigma_{m-n+1}=1$.
Then,  $0 \leq n \leq m$ or $n \leq m \leq -1$ holds.
\end{proposition}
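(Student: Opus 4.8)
The plan is to argue by contradiction, exploiting the fact that the weights appearing in \eqref{eq.LS1} are consecutive vertices of the Hasse diagram in Proposition \ref{prop.Hasse}. Since we always have $n \leq m$, the asserted conclusion ``$0 \leq n \leq m$ or $n \leq m \leq -1$'' is equivalent to the statement that either $n \geq 0$ or $m \leq -1$; its negation is exactly $n \leq -1$ and $m \geq 0$ holding simultaneously. Assuming this, I would isolate the unique step of $\pi$ that crosses the boundary between nonnegative and negative indices, namely the transition from $x_0\lambda$ to $x_{-1}\lambda$, and extract a contradiction from the $\sigma$-chain condition at that step.

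Concretely, under $n \leq -1$ and $m \geq 0$ the set of consecutive integers $\{n, n+1, \ldots, m\}$ contains both $0$ and $-1$, so in \eqref{eq.LS1} the weight $x_0\lambda$ occurs in position $v_0 := m+1$ and is immediately followed by $x_{-1}\lambda$. Because $n \leq -1$ gives $v_0 = m+1 \leq m-n$, the breakpoint $\sigma_{v_0}$ is a genuinely interior one, that is, $0 < \sigma_{v_0} < 1$. By Lemma \ref{lem.prime} $(2)$ every $\sigma$-chain here has length one, so the definition of an LS path requires $\sigma_{v_0}\langle x_0\lambda, \beta^{\vee}\rangle \in \Z_{<0}$, where $\beta$ is the unique positive real root with $x_{-1}\lambda = r_\beta(x_0\lambda)$; by Proposition \ref{prop.Hasse} this root is $\beta = \alpha_2$.

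Finally I would compute the relevant pairing using Lemma \ref{lem.xm}: since $x_0\lambda = p_1\Lambda_1 - p_0\Lambda_2$, we get $\langle x_0\lambda, \alpha_2^{\vee}\rangle = -p_0 = -l$, and the hypothesis \eqref{eq.m1} gives $l = 1$. Hence the chain condition reads $-\sigma_{v_0} \in \Z_{<0}$, forcing $\sigma_{v_0} \in \Z_{>0}$, which is incompatible with $0 < \sigma_{v_0} < 1$. This contradiction proves the proposition. The only delicate point is the bookkeeping that matches the decreasing-index description of $\pi$ in \eqref{eq.LS1} to the interior breakpoint $\sigma_{v_0}$ and to the edge label $\alpha_2$; the crucial structural input is simply that $p_0 = l = 1$, which makes $\langle x_0\lambda, \alpha_2^{\vee}\rangle = -1$ and thereby forbids any rational, non-integral subdivision point at that step.
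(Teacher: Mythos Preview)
Your proof is correct and follows essentially the same route as the paper: assume $m\geq 0$ and $n\leq -1$, locate the step from $x_0\lambda$ to $x_{-1}\lambda$ at the interior breakpoint $\sigma_{m+1}$, observe via the Hasse diagram that the relevant root is $\alpha_2$, and use $\langle x_0\lambda,\alpha_2^\vee\rangle=-l=-1$ together with $0<\sigma_{m+1}<1$ to contradict the chain condition. Your explicit appeal to Lemma~\ref{lem.prime}\,(2) is slightly redundant, since Proposition~\ref{prop.Hasse} already gives $\mathrm{dist}(x_0\lambda,x_{-1}\lambda)=1$, but this does no harm.
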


\begin{proof}
	Suppose, for a contradiction, that $m\geq0 $  and $n\leq -1$.
	By the definition of an LS path,
	there exists a $\sigma_{m+1}$-chain for $({\lambda} , x_{-1}{\lambda})$.
	It follows from {Proposition} \ref{prop.Hasse}
	that $\mathrm{dist}({\lambda}, x_{-1}{\lambda})=1$ and $r_2{\lambda}=x_{-1}{\lambda}$.
	Thus, we obtain $\langle {\lambda} , \alpha_2^\vee \rangle=-1$ and $0<\sigma_{m+1}<1$,
	which contradicts $\sigma_{m+1}\langle {\lambda} , \alpha_2^\vee \rangle \in \Z$.
\end{proof}

\begin{theorem}\label{thm.LScon}
	For each $\pi \in \mathbb{B}({\lambda})$,
	$\pi=\f_{i_r} \cdots \f_{i_1}\pi_{{\lambda}}$
	or $\pi=\e_{i_r} \cdots \e_{i_1}\pi_{{\lambda}}$ for some $i_1, \ldots , i_r \in I$,
	where $\pi_{{\lambda}}:=({\lambda}; 0, 1 )$.
	In particular, the crystal graph of $\mathbb{B}({\lambda})$ is connected.
\end{theorem}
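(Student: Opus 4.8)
The plan is to prove the structured statement directly: I would show that every $\pi\in\mathbb{B}(\lambda)$ can be carried to $\pi_\lambda$ by Kashiwara operators, with \emph{raising} operators sufficing when $\pi$ sits above $\lambda$ and \emph{lowering} operators sufficing when it sits below. By Proposition \ref{LS2}, every LS path $\pi=(x_m\lambda,\ldots,x_n\lambda;\sigma_0,\ldots,\sigma_{m-n+1})$ is of exactly one of two types: a \emph{positive} path with $0\le n\le m$, or a \emph{negative} path with $n\le m\le-1$. I would treat these families separately, showing that $\e_i^{\mathrm{max}}$-steps reduce a positive path to $\pi_\lambda$ and $\f_i^{\mathrm{max}}$-steps reduce a negative path to $\pi_\lambda$. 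Since $\e_i$ and $\f_i$ are mutually inverse whenever neither output is $\mathbf 0$, reversing these reductions gives precisely $\pi=\f_{i_r}\cdots\f_{i_1}\pi_\lambda$ in the positive case and $\pi=\e_{i_r}\cdots\e_{i_1}\pi_\lambda$ in the negative case; connectedness is then immediate.

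For a positive path I would induct on the top index $m$. The base case $m=0$ forces $n=0$, so $\pi=(x_0\lambda;0,1)=\pi_\lambda$. For $m\ge1$, choose $i\in I$ with $x_{m-1}\lambda=r_i x_m\lambda$ (so $i=2$ if $m$ is even and $i=1$ if $m$ is odd, by Proposition \ref{prop.Hasse}); by Lemma \ref{lem.xm} one has $\langle\iota(\pi),\alpha_i^\vee\rangle=\langle x_m\lambda,\alpha_i^\vee\rangle=-p_m<0$. Then the first segment of $H_i^\pi$ has negative slope, so $m_i^\pi\le-1$, giving $\e_i\pi\ne\mathbf 0$ and a genuine LS path $\e_i^{\mathrm{max}}\pi$. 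Lemma \ref{IK} yields $\iota(\e_i^{\mathrm{max}}\pi)=r_i\iota(\pi)=x_{m-1}\lambda$, so its top index is $m-1\ge0$; by Proposition \ref{LS2} the new path is again positive, and the induction hypothesis applies. Hence repeated $\e_i^{\mathrm{max}}$-steps carry $\pi$ to $\pi_\lambda$.

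The negative case is symmetric, inducting on the depth $-n\ge1$ of the bottom index and using $\kappa$ in place of $\iota$. For a negative path with bottom index $n$, choose $i$ with $x_{n+1}\lambda=r_i x_n\lambda$; by Lemma \ref{lem.xm} one computes $\langle\kappa(\pi),\alpha_i^\vee\rangle=\langle x_n\lambda,\alpha_i^\vee\rangle=p_{n+1}>0$, so the final segment of $H_i^\pi$ has positive slope, $\varphi_i(\pi)=H_i^\pi(1)-m_i^\pi\ge1$, and $\f_i^{\mathrm{max}}\pi$ is an LS path with $\kappa(\f_i^{\mathrm{max}}\pi)=x_{n+1}\lambda$ by Lemma \ref{IK}. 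If $n\le-2$ the output still carries a negative index, hence is negative (Proposition \ref{LS2}) with bottom depth $-n-1$, and the induction proceeds. The terminal case $n=-1$ forces $\pi=(x_{-1}\lambda;0,1)$; here $i=2$ and $\langle x_{-1}\lambda,\alpha_2^\vee\rangle=p_0=l=1$, so a single application of $\f_2$ reflects the whole path, giving $\f_2(x_{-1}\lambda;0,1)=(x_0\lambda;0,1)=\pi_\lambda$.

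The hypothesis $l=1$ (equivalently $p_0=1$) is where the argument really lives, and I expect the bookkeeping around index $0$ to be the only delicate point. It enters twice: first through Proposition \ref{LS2}, which rules out paths whose directions straddle the boundary $x_0\lambda\xleftarrow{\alpha_2} x_{-1}\lambda$ and so makes the clean positive/negative dichotomy possible; and second in the terminal negative step, where $p_0=1$ is exactly what lets $\f_2$ reflect the single segment onto $\lambda$ instead of producing a longer chain. The remaining verifications—that $\e_i^{\mathrm{max}}$ (resp.\ $\f_i^{\mathrm{max}}$) changes the extremal index by exactly one and keeps us inside the same family—are routine given Lemma \ref{IK} and Proposition \ref{LS2}, so I anticipate no serious obstacle beyond this boundary analysis.
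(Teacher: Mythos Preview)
Your argument is correct and follows essentially the same approach as the paper: both induct on the top index $m$ in the positive case using $\e_i^{\mathrm{max}}$ and Lemma~\ref{IK}, and dualize for the negative case. The paper dispatches the negative case with a single ``Similarly'', whereas you spell out the induction on $-n$ via $\f_i^{\mathrm{max}}$ and make explicit the terminal step $\f_2(x_{-1}\lambda;0,1)=\pi_\lambda$ (which is where $p_0=l=1$ enters); this is exactly the intended symmetric argument, not a different route.
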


\begin{proof}
	Write $\pi \in \B({\lambda})$ as (\ref{eq.LS1}). From Proposition \ref{LS2},
	it follows that either $0 \leq n \leq m$ or $n \leq m \leq -1$.
	We show by induction on $m$ that  if $0 \leq n \leq m$,
	then $\pi=\f_{i_r} \cdots \f_{i_1}\pi_{{\lambda}}$
	for some $i_1, \ldots , i_r \in I$.
	If $m=0$, then $n=0$, and hence $\pi =\pi_{{\lambda}}$.
	Thus the claim is obvious.
	Assume that $m>0$. We set
	\begin{equation*}
		i:=
		\begin{cases}
			2 & \text{if} \ m \ \text{is even}, \  \\
			1  & \text{if} \ m \ \text{is odd}; \
		\end{cases}
	\end{equation*}
	note that $\langle x_m {\lambda} , \alpha_i^\vee \rangle<0$
	and $r_i x_m {\lambda}= x_{m-1}{\lambda}$ {by Proposition \ref{prop.Hasse}}.
	By Lemma \ref{IK}, we see that
	$\e_i^\mathrm{max}\pi \in \B({\lambda})$ satisfies
	$\iota ({\e_i}^\mathrm{max}\pi) = r_i \iota (\pi) = r_i x_m {\lambda} = x_{m-1}{\lambda}$ .
	Hence, by induction hypothesis,
	$\e_i^\mathrm{max}\pi=\f_{i_r} \cdots \f_{i_1} \pi_{{\lambda}}$
	for some $i_1, \ldots , i_r \in I$.
	Hence, we obtain $\pi =\f_i^{\varepsilon_i(\pi)} \f_{i_r} \cdots \f_{i_1}  \pi_{{\lambda}}$,
	as desired.
	Similarly,
	we can show that  if $n \leq m \leq -1$,
	then  $\pi=\e_{i_r} \cdots \e_{i_1}\pi_{{\lambda}}$
	for some $i_1, \ldots , i_r \in I$.
	Thus we have proved Theorem \ref{thm.LScon}.
\end{proof}

\subsection{Proof of Proposition \ref{thm.main2}.}\label{S.prf.main2}
{Throughout this subsection, we} assume that
$\lambda=k\Lambda_1-l\Lambda_2\in P$ is
of the form either $(\mathrm{i})$ or $(\mathrm{ii})$ in Theorem \ref{thm.M},
and $k$ and $l$ are relatively prime.
We give a proof of Proposition \ref{thm.main2} only for the case
that \eqref{eq.m3} holds, i.e., $1<l<k<(a-1)l$;
the proof for the case that \eqref{eq.m4} holds is similar.
There exists a (unique) integer
	$c \in \{1,2, \ldots, k-1 \}$ such that
	\begin{equation}\label{eq.c}
		\frac{c}{k}< \frac{1}{l}< \frac{c+1}{k}.
	\end{equation}
% \end{lemma}
%
Then we define the sequence $\{q_m\}_{m\in \Z }$ of integers by the following recursive formula:
\begin{equation}\label{eq.qm}
	q_0=1, \quad
	q_1=c, \quad
	q_{m+2}=
	\begin{cases}
		bq_{m+1}-q_m & \text{if} \ m \ \text{is even}, \\
		aq_{m+1}-q_m & \text{if} \ m \ \text{is odd}. \\
	\end{cases}
\end{equation}
Recall that  $\{p_m\}_{m\in \Z}$ is defined by the recursive formulas \eqref{eq.pm} and \eqref{eq.pm2}
(for $\lambda=k\Lambda_1-l\Lambda_2$).
\begin{lemma}\label{lem.pq}
	It hold that $0<q_m < p_m $ and $q_m p_{m+1}- q_{m+1} p_{m} =k-lc$ for all $m \in \Z$.
	{In particular, we have $0<q_{m+1}/p_{m+1}<q_{m}/p_{m}<1$ for all $m\in \Z$.}
\end{lemma}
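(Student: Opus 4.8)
The plan is to treat the three sequences $\{p_m\}$, $\{q_m\}$, and their difference $\{r_m\}$ with $r_m:=p_m-q_m$ \emph{simultaneously}, exploiting that all three satisfy one common three-term recurrence $s_{m+2}=c'_m s_{m+1}-s_m$ for every $m\in\Z$, where $c'_m=b$ if $m$ is even and $c'_m=a$ if $m$ is odd. Indeed, \eqref{eq.pm}--\eqref{eq.pm2} and \eqref{eq.qm} (together with its evident backward analogue, defined as in \eqref{eq.pm2}) all rearrange to this single relation, and $\{r_m\}$ inherits it by linearity. I would also record at the outset, straight from \eqref{eq.c}, that $c=\lfloor k/l\rfloor$ and that $0<k-lc<l$ (the left inequality from $cl<k$, the right from $k<(c+1)l$).

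I would first dispose of the determinant identity (the second assertion). Setting $D_m:=q_m p_{m+1}-q_{m+1}p_m$ and substituting the recurrence, a one-line cancellation $D_{m+1}=q_{m+1}(c'_m p_{m+1}-p_m)-(c'_m q_{m+1}-q_m)p_{m+1}=q_m p_{m+1}-q_{m+1}p_m=D_m$ shows $D_m$ is constant in $m\in\Z$; hence $D_m\equiv D_0=1\cdot k-c\cdot l=k-lc$. In particular $D_m>0$ for all $m$ by the previous paragraph.

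The heart of the matter is the first assertion $0<q_m<p_m$, equivalently $q_m>0$ and $r_m>0$ for every $m$. Since each of $\{q_m\}$, $\{r_m\}$ solves the recurrence with $c'_m\ge 2$, the very identity used in the proof of Lemma \ref{lem.p}, namely $s_{n+2}-s_{n+1}=(c'_n-1)(s_{n+1}-s_n)+(c'_n-2)s_n$, yields a non-strict propagation principle: if $0<s_n\le s_{n+1}$ then $0<s_m\le s_{m+1}$ for all $m\ge n$, and symmetrically if $0<s_n\le s_{n-1}$ then $0<s_m\le s_{m-1}$ for all $m\le n$. It then suffices to check the seed inequalities at $m=0$. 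For $\{q_m\}$ one has $q_0=1$, $q_1=c\ge 1$, and $q_{-1}=a-c\ge 2$ (using $c\le a-2$, which follows from $cl<k<(a-1)l$), so $0<q_0\le q_1$ and $0<q_0\le q_{-1}$ give $q_m>0$ in both directions. For $\{r_m\}$ one has $r_0=l-1>0$ (using $l>1$), $r_1=k-c>l-1=r_0$ (since $c<k/l$ and $k>l$), and $r_{-1}=(al-k)-(a-c)=a(l-1)-(k-c)\ge l-1=r_0$ (since $k-c\le (a-1)(l-1)$, derived from $c=\lfloor k/l\rfloor$ and $k<(a-1)l$), so the seeds are again in place and $r_m>0$ for all $m$. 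Together these give $0<q_m<p_m$.

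Finally, the ``in particular'' clause is immediate: $0<q_m<p_m$ together with $p_m>0$ forces $q_m/p_m\in(0,1)$, while $q_m/p_m-q_{m+1}/p_{m+1}=D_m/(p_m p_{m+1})=(k-lc)/(p_m p_{m+1})>0$ gives the strict monotonicity $0<q_{m+1}/p_{m+1}<q_m/p_m<1$. I expect the only genuine obstacle to be the backward seed $r_{-1}\ge r_0$: unlike the other three base inequalities it is not visibly positive and really requires the arithmetic input $c=\lfloor k/l\rfloor$ combined with the orbit constraint $k<(a-1)l$. It is precisely this coupling of $c$ to $k$ and $l$ that rules out the a priori possible sign failure of $r$ in the negative direction, so I would isolate and prove $k-c\le(a-1)(l-1)$ as the key estimate before invoking the propagation principle.
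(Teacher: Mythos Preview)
Your proposal is correct and follows essentially the same route as the paper: both arguments show $q_m>0$ and $p_m-q_m>0$ separately by checking the seed inequalities at $m=-1,0,1$ and then invoking the propagation principle of Lemma~\ref{lem.p}, and both reduce the determinant identity to a one-step recurrence computation. The only notable difference is at the delicate backward seed $r_{-1}\ge r_0$: the paper shows $r_{-1}-r_0>-1$ via the rational bounds $a>(k+l)/l$ and $c>(k-l)/l$ and then appeals to integrality, whereas you prove the clean integer inequality $k-c\le(a-1)(l-1)$ directly (which indeed follows by writing $k=cl+e$ with $1\le e\le l-1$ and using $c\le a-2$). Your formulation is arguably tidier, but the two arguments are the same in substance.
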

\begin{proof}
	First, let us show that $q_m >0$ for all $m\in \Z$.
	Since $q_1=c \geq 1=q_0$, we see by the same argument as Lemma \ref{lem.p} (1)
	that $q_{m+1} \geq q_m$ for all $m \geq 0$; in particular,
	$q_m>0$ for all $m \geq 0$.
	Since $k/l<a-1$ by \eqref{eq.m3}, and $c<k/l$ by \eqref{eq.c},
	we see that
	\begin{equation*}
		q_{-1}-q_0=(a-c)-1=(a-1)-c \geq \dfrac{k}{l}- \dfrac{k}{l}=0,
	\end{equation*}
	and hence $q_{-1}\geq q_0$.
	By the same argument as Lemma \ref{lem.p} (2),
	we see that $q_{m-1}\geq q_m$ for all $m\leq -1$; in particular,
	$q_m>0$ for all $m\leq -1$.

	Next, let us show that $q_m < p_m $ for all $m\in \Z$.
	If we set $s_m:=p_m - q_m $ for $m\in \Z$, then
	we have
	\begin{equation*}
		s_0=p_0 - q_0, \quad
		s_1=p_1 - q_1, \quad
		s_{m+2}=
		\begin{cases}
			bs_{m+1}-s_m & \text{if} \ m \ \text{is even}, \\
			as_{m+1}-s_m & \text{if} \ m \ \text{is odd}. \\
		\end{cases}
	\end{equation*}
	By the same argument as above,
	it suffices to show that  $s_1 \geq s_{0}$ and  $s_0 \leq s_{-1}$.
	First, we show that $s_1-s_0=(k-c)-(l-1)\geq 0$.
	Since $c<{k/l}$ by (\ref{eq.c}), and $l<k$ by \eqref{eq.m3},
	we have
\begin{equation*}
	(k-c)-(l-1) >k- \dfrac{k}{l}-l+1
					=k \left(  1-\dfrac{1}{l}  \right) -l+1
					>l \left( 1-\dfrac{1}{l} \right) -l+1 =0.
\end{equation*}
	Next, we show that $s_{-1}-s_0=(al-k-a+c)-(l-1)\geq 0$.
	Note that (\ref{eq.m3}) implies $(k+l)/l <a$, and (\ref{eq.c}) implies
	$(k-l)/l < c$.
	Then we compute
	\begin{align*}
		(al-k-a+c)-(l-1) 	&= a(l-1)-k+c-l+1\\
						&>  \dfrac{k+l}{l} (l-1)-k+ \dfrac{k-l}{l} -l+1 =-1.
	\end{align*}
	Because $s_{-1}-s_{0}$ is an integer, it follows that $s_{-1}\geq s_{0}$.

	Finally, the equality $q_m p_{m+1}- q_{m+1} p_{m} =k-lc$  for $m \in \Z $ can be
	easily shown by induction on $ | m | $.
\end{proof}
For $m\in \Z$ and $n\in \Z_{\geq 1}$, we say that $\pi =(\nu_1, \nu_2, \ldots \nu_u;
\sigma_0, \sigma_1, \ldots, \sigma_u ) \in \B(\lambda)$ satisfies the condition
$C(m, n)$ if $u\geq 2n+1$, and
there exists $v \in \Z$  such that $n<v<u-n+1$,
$\nu_{v}=x_{m}\lambda$, and
	$\sigma_{v+s}={{q_{m-s}}/{p_{m-s}}}$
	for $s=-n, -n+1, \ldots, n-1$;
	in this case,
we see from Lemma \ref{lem.prime} (3), along with $\nu_{v}=x_{m}\lambda$, that
\begin{equation}\label{eq.nu}
	\nu_{v+s}=x_{m-s}\lambda
	\quad \text{for} \quad v+s=1, 2,  \ldots, u.
\end{equation}
Thus, $\pi$ is of the form:
\begin{align*}
	\pi =
	\biggl(
	x_{m+v-1}\lambda, x_{m+v-2}\lambda, &\ldots, x_{m+v-u}\lambda;
	\biggr.
	\\
 	\biggl.
	\sigma_0, \sigma_1, \ldots, &\sigma_{v-n-1}, \dfrac{q_{m+n}}{p_{m+n}}, \dfrac{q_{m+n-1}}{p_{m+n-1}}, \ldots, \dfrac{q_{m-n+1}}{p_{m-n+1}}, \sigma_{v+n},  \ldots, \sigma_u
	\biggr).
\end{align*}
%
% \begin{example}
% 	Let $a=b=3$ in \eqref{eq.gcm} and $\lambda=3\Lambda_1-2\Lambda_2$.
% 	Then we have $p_0=2, p_1=3, p_2=7, p_3=18, p_4=47$,
% 	and $q_0=1, q_1=1, q_2=2, q_3=5, q_4=13$.\\
% 	(1) $(x_1\lambda, \lambda, x_{-1}\lambda;0, 1/3, 1/2, 1)$
% 	satisfies the condition $C(0,1)$.
% 	Indeed, we can take $v=0$.\\
% 	(2) $(x_4\lambda, x_3\lambda, x_2\lambda, x_1\lambda, \lambda, x_{-1}\lambda;0, 12/47, 5/18, 2/7, 1/3, 1/2, 1)$
% 	satisfies the condition $C(1,2)$.
% 	Indeed, we can take $v=4$.
% \end{example}
%
We set
\begin{equation}\label{eq.j}
	j:=
	\begin{cases}
		2 & \text{if} \ m \ \text{is even}, \  \\
		1  & \text{if} \ m \ \text{is odd}, \
	\end{cases}
	\quad
	j':=
	\begin{cases}
		1 & \text{if} \ m \ \text{is even}, \  \\
		2  & \text{if} \ m \ \text{is odd}; \
	\end{cases}
\end{equation}
note that
\begin{equation}\label{eq.wtp}
	\langle x_{m+v-h}\lambda, \alpha_j^\vee \rangle =
	\begin{cases}
			-p_{m+v-{h}} &\text{if } {h} \equiv v \ \mathrm{mod} \  2,\\
			p_{m+v-{h}+1} &\text{if } {h} \not \equiv v \ \mathrm{mod} \ 2,
	\end{cases}
\end{equation}
% \begin{equation}\label{eq.wtp}
% 	\langle \nu_{v+s}, \alpha_j^\vee \rangle =
% 	\begin{cases}
% 			-p_{m-s} &\text{if } s \text{ is even}, \\
% 			p_{m-s+1} &\text{if } s \text{ is odd},
% 	\end{cases}
% \end{equation}
by \eqref{eq.nu} and Lemma \ref{lem.xm}.

\begin{lemma}\label{lem.frag}
	If $\pi =(x_{m+v-1}\lambda, x_{m+v-2}\lambda, \ldots,
	 x_{m+v-u}\lambda;
	\sigma_0, \sigma_1, \ldots, \sigma_u ) \in \B(\lambda) $
	% $\pi =(\nu_1, \nu_2, \ldots \nu_u;
	% \sigma_0, \sigma_1, \ldots, \sigma_u ) \in \B(\lambda) $
	satisfies  the condition $C(m, n)$,
	then for each $r= v-n, v-n+1, \ldots,  v+n-1$,
	\begin{itemize}
		\item[$(\mathrm{i})$] $H_j^\pi(\sigma_{{r}})=H_j^\pi(\sigma_v) \in \Z$
		if  ${r} \equiv v$ $\mathrm{ mod }$ $2$,
		\item[$(\mathrm{ii})$]
		$H_j^\pi(\sigma_v) < H_j^\pi(\sigma_{{r}}) < H_j^\pi(\sigma_{v})+1$
if  ${r} \not \equiv v$  $\mathrm{ mod }$ $2$;
	\end{itemize}
	in particular,
	$\{ H_j^\pi(t) \mid \sigma_{v-n}\leq t\leq \sigma_{v+n-1} \}
	\subset [H_j^\pi(\sigma_{v}) , H_j^\pi(\sigma_{v})+1  )$.
\end{lemma}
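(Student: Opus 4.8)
The plan is to reduce the whole statement to the values of $H_j^\pi$ at the break points $\sigma_{v-n},\ldots,\sigma_{v+n-1}$. Since $\pi$ is piecewise linear, $H_j^\pi(t)=\langle\pi(t),\alpha_j^\vee\rangle$ is affine on each subinterval $[\sigma_{r-1},\sigma_r]$; hence, once (i) and (ii) are proved at the break points, the convexity of the half-open interval $[H_j^\pi(\sigma_v),H_j^\pi(\sigma_v)+1)$ immediately yields the final ``in particular'' assertion for all $t\in[\sigma_{v-n},\sigma_{v+n-1}]$. So it suffices to analyze the one-step increments
\[
D_h:=H_j^\pi(\sigma_h)-H_j^\pi(\sigma_{h-1})=(\sigma_h-\sigma_{h-1})\langle\nu_h,\alpha_j^\vee\rangle,\qquad \nu_h=x_{m+v-h}\lambda,
\]
for $h=v-n+1,\ldots,v+n-1$, all of whose data are prescribed by the condition $C(m,n)$.

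First I would compute $D_h$ explicitly. Writing $M:=m+v-h$, the condition $C(m,n)$ gives $\sigma_h=q_M/p_M$ and $\sigma_{h-1}=q_{M+1}/p_{M+1}$, so by the determinant identity of Lemma \ref{lem.pq},
\[
\sigma_h-\sigma_{h-1}=\frac{q_Mp_{M+1}-q_{M+1}p_M}{p_Mp_{M+1}}=\frac{k-lc}{p_Mp_{M+1}}.
\]
Combining this with \eqref{eq.wtp} yields
\[
D_h=
\begin{cases}
\dfrac{k-lc}{p_{M}} & \text{if } h\not\equiv v \bmod 2,\\[1mm]
-\dfrac{k-lc}{p_{M+1}} & \text{if } h\equiv v \bmod 2.
\end{cases}
\]
The structural heart of the argument is the resulting two-step cancellation: whenever $h\not\equiv v\bmod 2$ one has $h+1\equiv v$ and $m+v-(h+1)+1=M$, so $D_h+D_{h+1}=(k-lc)/p_M-(k-lc)/p_M=0$. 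Telescoping $H_j^\pi(\sigma_r)-H_j^\pi(\sigma_v)=\sum_{h=v+1}^{r}D_h$ (for $r>v$) or $=-\sum_{h=r+1}^{v}D_h$ (for $r<v$) into these cancelling adjacent pairs then leaves exactly $0$ when $r\equiv v\bmod 2$, and a single uncancelled increment when $r\not\equiv v\bmod 2$; a short check of the two sign patterns (according as $r>v$ or $r<v$) shows that in the latter case the net contribution equals $+(k-lc)/p_{m'}$ for a suitable index $m'$. This gives the equality in (i) and the ``strictly between'' shape of (ii), modulo two points: the integrality in (i), and the bound of the leftover quantity by $1$.

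For the size estimate I would record that $0<k-lc<l\le p_{m'}$ for every $m'\in\Z$. Indeed $lc<k$ and $k<(c+1)l$ both follow from the defining inequalities \eqref{eq.c} for $c$, whence $0<k-lc<l$; and by Remark \ref{rem.orbit} (applied to the form \eqref{eq.m3}, i.e.\ (iii$'$)) the sequence $\{p_{m'}\}$ attains its minimum $l=p_0$, so $l\le p_{m'}$ for all $m'$. Hence each leftover quantity $(k-lc)/p_{m'}$ lies in the open interval $(0,1)$, which is exactly (ii). For the integrality in (i), note that $\langle\nu_v,\alpha_j^\vee\rangle=-p_m<0$ and $\langle\nu_{v+1},\alpha_j^\vee\rangle=p_m>0$ by \eqref{eq.wtp} and Lemma \ref{lem.xm}, while the $\sigma$'s are strictly increasing; as $v$ is interior ($n<v<u-n+1$), the point $\sigma_v$ is a local minimum of $H_j^\pi$, so $H_j^\pi(\sigma_v)\in\Z$ by \eqref{int}. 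Together with the telescoping this gives $H_j^\pi(\sigma_r)=H_j^\pi(\sigma_v)\in\Z$ for $r\equiv v$.

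I expect the only genuinely delicate step to be the bound $k-lc<l$ together with the identification $l=\min_{m'}p_{m'}$: this is precisely where the defining property \eqref{eq.c} of $c$ and the hypothesis \eqref{eq.m3} are used, and it is what confines each leftover increment to $(0,1)$ and hence keeps every break-point value inside the half-open window $[H_j^\pi(\sigma_v),H_j^\pi(\sigma_v)+1)$. The remaining work---the parity bookkeeping in the telescoping and the routine induction behind the determinant identity---is mechanical.
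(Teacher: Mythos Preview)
Your proof is correct and follows essentially the same route as the paper: compute the increments $a_h=H_j^\pi(\sigma_h)-H_j^\pi(\sigma_{h-1})$ using \eqref{eq.wtp} and the determinant identity of Lemma~\ref{lem.pq}, then bound them by $(k-lc)/p_{m'}<1$ via \eqref{eq.c} and Remark~\ref{rem.orbit}. The one minor difference is that you observe the exact two-step cancellation $D_h+D_{h+1}=0$ and telescope directly, whereas the paper argues that each pair $|a_{r+1}+a_{r+2}|<1$ and invokes the integrality of $H_j^\pi(\sigma_r)$ at \emph{every} local minimum (Remark~\ref{rem.alt} and \eqref{int}) to force equality; your version is slightly sharper but the structure and ingredients are identical.
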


\begin{proof}
	Assume that $\pi =(x_{m+v-1}\lambda, x_{m+v-2}\lambda, \ldots, x_{m+v-u}\lambda;
	\sigma_0, \sigma_1, \ldots, \sigma_u ) \in \B(\lambda) $
	satisfies  the condition $C(m, n)$.
	% For $s=-n, -n+1, \ldots, n-1$,
	For ${h}= v-n+1, v-n+2, \ldots, v+n-1$,
	we see that
	\begin{align*}
		a_{{h}}
			& :=H_j^\pi(\sigma_{{h}})-H_j^\pi(\sigma_{{h}-1}) \\
			& = \left( H_j^\pi(\sigma_{{h-1}}) +
			\langle x_{m+v-h}\lambda, \alpha_j^\vee \rangle
			(\sigma_{{h}} -\sigma_{{h-1}}) \right)-H_j^\pi(\sigma_{{h-1}}) \\
			& = \langle x_{m+v-h}\lambda, \alpha_j^\vee \rangle(\sigma_{{h}} - \sigma_{{h-1}}).
	\end{align*}
	By \eqref{eq.wtp} and the assumption that $\pi$ satisfies the condition $C(m, n)$,
	we obtain
	\begin{equation}\label{eq.am1}
		a_{{h}} =
		\begin{cases}
				-p_{m+v-{h}}\left( \dfrac{q_{m+v-{h}}}{p_{m+v-{h}}}-\dfrac{q_{m+v-{h}+1}}{p_{m+v-{h}+1}} \right)
				 &\text{if } {h} \equiv v \ \mathrm{mod} \  2,\\
				p_{m+v-{h}+1} \left( \dfrac{q_{m+v-{h}}}{p_{m+v-{h}}}-\dfrac{q_{m+v-h+1}}{p_{m+v-{h}+1}} \right)
				 &\text{if } {h} \not \equiv v \ \mathrm{mod} \  2.
		\end{cases}
	\end{equation}
	We see from  Lemma \ref{lem.pq} that
	\begin{equation}
		p_{{z}}\left( \dfrac{q_{{z}}}{p_{{z}}}-\dfrac{q_{z+1}}{p_{z+1}} \right)
		=\dfrac{k-lc}{p_{z+1}} \ \text{ and } \
		p_{z+1}\left( \dfrac{q_{{z}}}{p_{{z}}}-\dfrac{q_{z+1}}{p_{z+1}} \right)
		=\dfrac{k-lc}{p_{z}},
	\end{equation}
	for each ${z} \in \Z$.
	Here we recall that $p_{z} \geq p_0=l$ (see Remark \ref{rem.orbit}).
	By (\ref{eq.c}), we have
	\begin{equation}\label{eq.less1}
		\dfrac{k-lc}{p_{z}} \leq \dfrac{k-lc}{p_0} < 1
	\end{equation}
	for all ${z}\in\Z$.
	Combining (\ref{eq.am1})--(\ref{eq.less1}),
	we deduce that
\begin{equation}\label{eq.syou}
		0<a_{{h}}<1 \text{ if } {h} \not \equiv v  \ \mathrm{mod} \  2,  \text{ and }
		-1< a_{{h}} <0 \text{ if } {h}  \equiv v \ \mathrm{mod} \ 2.
\end{equation}

	Let $M:=\{ x\in \Z \mid v-n\leq x \leq  v+n-1\}$;
	note that $\{ v-1, v\} \subset M$ for all $n \in \Z_{\geq1 }$.
	Let  ${r} \in M$ be such that  ${r} \equiv v \ \mathrm{mod} \  2$.
	We see from Remark \ref{rem.alt}  and \eqref{int} that $H_j^\pi(\sigma_{{r}})\in \Z$.
	If  ${r}+2 \in M$ (resp., ${r}-2 \in M$),
	then we see by \eqref{eq.syou} that
	$|H_j^\pi(\sigma_{{r+2}})-H_j^\pi(\sigma_{{r}})|=|a_{{r+1}}+a_{{r}+2}|<1$
	 (resp., $|H_j^\pi(\sigma_{{r}})-H_j^\pi(\sigma_{{r}-2})|=|a_{{r-1}}+a_{{r}}|<1$).
	We see by \eqref{int} that
	$H_j^\pi(\sigma_{{r}})=H_j^\pi(\sigma_{{r}+2})$
	(resp., $H_j^\pi(\sigma_{{r}})=H_j^\pi(\sigma_{{r}-2})$).
	Hence, we obtain $H_j^\pi(\sigma_{{r}})=H_j^\pi(\sigma_{v})$
	for all ${r} \in M$ such that  ${r} \equiv v \ \mathrm{mod} \  2$.
	Thus we have shown part (i).

	Let  ${r} \in M$ be such that  ${r} \not \equiv v \ \mathrm{mod} \  2$.
	If ${r}+1 \in M$ (resp., ${r}-1 \in M$),  then we see that
	$H_j^\pi(\sigma_{{r}})=H_j^\pi(\sigma_{{r}+1})-a_{{r+1}}$
	(resp., $H_j^\pi(\sigma_{{r}})=H_j^\pi(\sigma_{{r}-1})+a_{{r}}$).
	By part $(\mathrm{i})$, we obtain
	$H_j^\pi(\sigma_{{r}})=H_j^\pi(\sigma_{v})-a_{{r+1}}$
	(resp., $H_j^\pi(\sigma_{{r}})=H_j^\pi(\sigma_{v})+a_{{r}}$).
	We see by \eqref{eq.syou} that $H_j^\pi(\sigma_v) < H_j^\pi(\sigma_{{r}}) < H_j^\pi(\sigma_{v})+1$ for all ${r} \in M$ such that
	${r} \not \equiv v \ \mathrm{mod} \  2$.
	Thus we have shown part (ii),
	thereby completing the proof of Lemma \ref{lem.frag}.
%
	% because $|H_j^\pi(\sigma_{p})-H_j^\pi(\sigma_{p+2})|=|a_{p}+a_{p+1}|<1$.
	%  (resp., $|H_j^\pi(\sigma_{p})-H_j^\pi(\sigma_{p-2})|=|a_{p}+a_{p-1}|<1$).
	% By Remark \ref{rem.alt} and \eqref{int},
	% we obtain $H_j^\pi(\sigma_{v-s})\in \Z$.
	% Since $\langle \nu_{v}, \alpha_j^\vee \rangle<0$ and
	% $\langle \nu_{v+1}, \alpha_j^\vee \rangle>0$
	% by \eqref{eq.wtp},
	% we see from \eqref{int} that $H_j^\pi(\sigma_{v})\in \Z$.
	% Similarly, we see that $H_j^\pi(\sigma_{v-2})\in \Z$.
	% Since $|H_j^\pi(\sigma_{v-s})-H_j^\pi(\sigma_{v-s-2})|=|a_{m-s}+a_{m-s-1}|<1$,
	% we obtain $H_j^\pi(\sigma_{v-s})=H_j^\pi(\sigma_{v-s-2})$
	% for  all $s \in \{-n, -n+1, \ldots, n-1 \}$.
\end{proof}

\begin{proposition}\label{thm.cmn}
	Fix $n \geq1$. Assume that $\pi \in \B(\lambda)$ satisfies
	the condition $C(m, n)$ for some $m\in \Z$.
	Let $i \in I$.
	If $\e_i\pi\neq \mathbf{0}$, then $\e_i\pi$ satisfies the condition $C(m, n)$ or $C(m-1, n)$.
	If $\f_i\pi\neq \mathbf{0}$, then $\f_i\pi$ satisfies the condition $C(m, n)$ or $C(m+1, n)$.
\end{proposition}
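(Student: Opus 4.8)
The plan is to analyse the two root operators through their explicit local action on the piecewise-linear path, exploiting that the Hasse diagram of $W\lambda$ is the single chain of Proposition \ref{prop.Hasse}. I will treat $\e_i$ in detail; the argument for $\f_i$ is entirely parallel (it acts on the right end of the path, adding $-\alpha_i$ instead of $+\alpha_i$), and the two indices $i=j$ and $i=j'$ are handled the same way once Lemma \ref{lem.frag} is supplemented by its evident analogue for $H_{j'}^\pi$, whose \emph{local minima} (which are the integers, by \eqref{int}), rather than maxima, occur at the breakpoints $\sigma_r$ with $r\equiv v \ \mathrm{mod}\ 2$.

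First I would record the structural observation that makes everything tractable. On the interval $[t_0,t_1]$ on which $\e_i$ acts, $H_i^\pi$ is strictly decreasing by \eqref{int}; on the other hand, across every breakpoint of $\pi$ the slope $\langle \nu_h,\alpha_i^\vee\rangle$ changes sign, because consecutive directions $x_p\lambda$ and $x_{p-1}\lambda$ pair with $\alpha_i^\vee$ with opposite signs by Lemma \ref{lem.xm}. Hence a maximal interval of strict monotonicity of $H_i^\pi$ is contained in a single linear segment of $\pi$, so $[t_0,t_1]\subset[\sigma_{w-1},\sigma_w]$ for one $w$. Consequently $\e_i$ reflects exactly one direction, $x_p\lambda\mapsto r_i x_p\lambda=x_{p-1}\lambda$ (the index drops by one, for both choices of $i$), on $[t_0,t_1]$, and translates $\pi|_{[t_1,1]}$ by $+\alpha_i$, which preserves both the velocities and the breakpoint times there.

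Next I would locate $[t_0,t_1]$ relative to the window $\sigma_{v-n},\dots,\sigma_{v+n-1}$. By Lemma \ref{lem.frag}, $H_j^\pi$ is confined to $[H_j^\pi(\sigma_v),H_j^\pi(\sigma_v)+1)$ on the window, attaining the integer value $H_j^\pi(\sigma_v)$ exactly at the breakpoints $\sigma_r$ with $r\equiv v$; the analogous confinement holds for $H_{j'}^\pi$. Combined with the single-segment fact and the inequality $H_i^\pi(\sigma_{w-1})\ge m_i^\pi+1$ forced by the definition of $t_0$, this pins the first global minimum $\sigma_w=t_1$ of $H_i^\pi$ to lie either strictly outside the window or precisely at its extreme (high-index) breakpoint. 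In the former case the reflection sits away from the window and the translation is by a constant, so every window breakpoint $\sigma_{v-n},\dots,\sigma_{v+n-1}$ and every window direction is untouched; thus $\e_i\pi$ again satisfies $C(m,n)$, and one checks that the interiority $n<v'<u'-n+1$ survives, since $\e_i$ alters the number of breakpoints by at most one and does so on the far side of the window.

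The remaining case, where $\e_i$ acts exactly at the high-index boundary segment of the window, is the genuinely delicate one and is the step I expect to be the main obstacle. Here the reflection lowers the extreme direction's index by one and its translated neighbour merges with it, so the window re-centres: the center direction is preserved but the surrounding breakpoints come to realise the pattern attached to $x_{m-1}\lambda$ in place of $x_m\lambda$, which should yield $C(m-1,n)$. The hard part is the boundary bookkeeping: one must compute the single new boundary breakpoint explicitly from the slope $-p_p$ of $H_i^\pi$ on that segment, verify via the recursions \eqref{eq.pm} and \eqref{eq.qm} and the identity $q_zp_{z+1}-q_{z+1}p_z=k-lc$ of Lemma \ref{lem.pq} that it equals the value $C(m-1,n)$ demands, and confirm that the remaining prescribed breakpoints align with the shifted center while the interiority and integrality constraints persist. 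The parallel analysis for $\f_i$ acts at the low-index boundary, where $r_i$ raises the index by one, and produces $C(m+1,n)$; together these dispose of all four cases.
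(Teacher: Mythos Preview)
Your overall strategy matches the paper's, and your observation that $[t_0,t_1]$ lies in a single linear segment of $\pi$ (because consecutive directions pair with $\alpha_i^\vee$ with opposite signs) is correct and tidies the case analysis. The gap is in your account of the boundary case. When (for $i=j$, $n=2$) the reflection occurs with $t_1=\sigma_{v-2}$, the new breakpoint $t_0$ created by $\e_j$ is \emph{irrelevant}: the new window for $C(m-1,2)$ with centre $v'=v+1$ consists of $\sigma'_{v-1},\sigma'_v,\sigma'_{v+1},\sigma'_{v+2}$, and $t_0=\sigma'_{v-2}$ lies strictly to its left. The non-trivial point is instead that the \emph{pre-existing} breakpoint $\sigma_{v+2}$ of $\pi$, which was not prescribed by $C(m,2)$, must be shown to equal $q_{m-2}/p_{m-2}$. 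This is forced by the hypothesis that $t_1=\sigma_{v-2}$ is the \emph{first} global minimum: since $H_j^\pi(\sigma_{v+2})$ is an integer local minimum with $m_j^\pi\le H_j^\pi(\sigma_{v+2})<H_j^\pi(\sigma_{v+1})<m_j^\pi+1$, one gets $H_j^\pi(\sigma_{v+2})=m_j^\pi=H_j^\pi(\sigma_v)$, and then solving the linear relation on $[\sigma_{v+1},\sigma_{v+2}]$ via \eqref{eq.pm}, \eqref{eq.qm} yields $\sigma_{v+2}=q_{m-2}/p_{m-2}$ (this is the paper's \eqref{eq.eqa}). Your sketch speaks of computing ``the single new boundary breakpoint from the slope on that segment'', pointing at the reflected segment on the left; the computation that actually matters is on the opposite side of the window.

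Two smaller corrections. First, for $H_{j'}^\pi$ the local minima occur at $\sigma_r$ with $r\not\equiv v\bmod 2$, not $r\equiv v$. Second, the shifting boundary case does not arise symmetrically for $j$ and $j'$: for $n=2$ the paper shows that $\f_j\pi$ \emph{always} satisfies $C(m,2)$ (the last global minimum can never land at a window breakpoint, because the equal local minima extend to $\sigma_{v+2}$ and beyond), whereas $\f_{j'}\pi$ is the one that may shift to $C(m+1,2)$; dually for $\e_j$ versus $\e_{j'}$. This does not invalidate your plan but is worth tracking when you fill in the details.
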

\begin{proof}

	For simplicity, we prove the assertion only for the case of  $ n = 2 $.
	We set $j$ and $j'$ as \eqref{eq.j}.

		Now, assume that $\f_j\pi \neq \mathbf{0}$;
		we show that $\f_j\pi$ satisfies the condition $C(m, 2)$.
		For this, it suffices to show that $(\f_j\pi)(t)=\pi(t)$ or
		$(\f_j\pi)(t)=\pi(t)-\alpha_j$   for
		$t\in (\sigma_{v-3}, \sigma_{v+2} )= \{ t \in \R \mid \sigma_{v-3}<t< \sigma_{v+2} \}$.
	By Lemma \ref{lem.frag}, we have $H_j^\pi(\sigma_{v-2})=H_j^\pi(\sigma_{v})$,
	$H_j^\pi(\sigma_{v-2})<H_j^\pi(\sigma_{v-1})<H_j^\pi(\sigma_{v-2})+1$,
	and $H_j^\pi(\sigma_{v})<H_j^\pi(\sigma_{v+1})<H_j^\pi(\sigma_{v})+1$.
	Note that
	there exist $\sigma_{v+1} <t' \leq \sigma_{v+2}$ such that
	 $H_j^\pi(\sigma_{v-2})=H_j^\pi(\sigma_{v})=H_j^\pi(t')$
	 since  $\langle \nu_{v+2}, \alpha_j^\vee \rangle<0$
 	and $H_j^\pi(\sigma_{v+2})\in\Z$ by Remark \ref{rem.alt} and \eqref{int};
	see Figure \ref{fig.1}.
	\begin{figure}
			% \scalebox{1.0}{
			%WinTpicVersion4.32a
{\unitlength 0.1in%
\begin{picture}(57.8000,27.2000)(0.2000,-29.0500)%
% STR 2 0 3 0 Black White
% 4 900 1600 900 1700 5 0 0 0
% \small{$H_j^\pi(\sigma_v)$}
\put(9.0000,-17.0000){\makebox(0,0){\small{$H_j^\pi(\sigma_v)$}}}%
% STR 2 0 3 0 Black White
% 4 800 1000 800 1100 5 0 0 0
% \small{$H_j^\pi(\sigma_v)+1$}
\put(8.0000,-11.0000){\makebox(0,0){\small{$H_j^\pi(\sigma_v)+1$}}}%
% STR 2 0 3 0 Black White
% 4 1200 150 1200 250 5 0 0 0
% $H_j^\pi(t)$
\put(12.0000,-2.5000){\makebox(0,0){$H_j^\pi(t)$}}%
% LINE 2 0 3 0 Black White
% 4 2200 1800 1600 800 4400 1400 5200 2600
%
\special{pn 8}%
\special{pa 2200 1800}%
\special{pa 1600 800}%
\special{fp}%
\special{pa 4400 1400}%
\special{pa 5200 2600}%
\special{fp}%
% LINE 2 1 3 0 Black White
% 4 5200 2610 5400 2400 1600 800 1400 1000
%
\special{pn 8}%
\special{pa 5200 2610}%
\special{pa 5400 2400}%
\special{da 0.070}%
\special{pa 1600 800}%
\special{pa 1400 1000}%
\special{da 0.070}%
% STR 2 0 3 0 Black White
% 4 2330 1800 2330 1900 5 0 0 0
% $t=\sigma_{v-2}$
\put(23.3000,-19.0000){\makebox(0,0){$t=\sigma_{v-2}$}}%
% STR 2 0 3 0 Black White
% 4 2920 1200 2920 1300 5 0 0 0
% $t=\sigma_{v-1}$
\put(29.2000,-13.0000){\makebox(0,0){$t=\sigma_{v-1}$}}%
% STR 2 0 3 0 Black White
% 4 3650 1800 3650 1900 5 0 0 0
% $t=\sigma_{v}$
\put(36.5000,-19.0000){\makebox(0,0){$t=\sigma_{v}$}}%
% STR 2 0 3 0 Black White
% 4 4550 1200 4550 1300 5 0 0 0
% $t=\sigma_{v+1}$
\put(45.5000,-13.0000){\makebox(0,0){$t=\sigma_{v+1}$}}%
% STR 2 0 3 0 Black White
% 4 5330 2600 5330 2700 5 0 0 0
% $t=\sigma_{v+2}$
\put(53.3000,-27.0000){\makebox(0,0){$t=\sigma_{v+2}$}}%
% VECTOR 1 0 3 0 Black White
% 2 1205 2905 1205 405
%
\special{pn 13}%
\special{pa 1205 2905}%
\special{pa 1205 405}%
\special{fp}%
\special{sh 1}%
\special{pa 1205 405}%
\special{pa 1185 472}%
\special{pa 1205 458}%
\special{pa 1225 472}%
\special{pa 1205 405}%
\special{fp}%
% STR 2 0 3 0 Black White
% 4 1730 600 1730 700 5 0 0 0
% $t=\sigma_{v-3}$
\put(17.3000,-7.0000){\makebox(0,0){$t=\sigma_{v-3}$}}%
% STR 2 0 3 0 Black White
% 4 1820 1580 1820 1680 5 0 0 0
% \tiny{$\langle \nu_{v-2}, \alpha_j^\vee \rangle$}
\put(18.2000,-16.8000){\makebox(0,0){\tiny{$\langle \nu_{v-2}, \alpha_j^\vee \rangle$}}}%
% STR 2 0 3 0 Black White
% 4 4770 2300 4770 2400 5 0 0 0
% \tiny{$\langle \nu_{v+2}, \alpha_j^\vee \rangle$}
\put(47.7000,-24.0000){\makebox(0,0){\tiny{$\langle \nu_{v+2}, \alpha_j^\vee \rangle$}}}%
% STR 2 0 3 0 Black White
% 4 2390 1350 2390 1450 5 0 0 0
% \tiny{$\langle \nu_{v-1}, \alpha_j^\vee \rangle$}
\put(23.9000,-14.5000){\makebox(0,0){\tiny{$\langle \nu_{v-1}, \alpha_j^\vee \rangle$}}}%
% STR 2 0 3 0 Black White
% 4 3060 1570 3060 1670 5 0 0 0
% \tiny{$\langle \nu_{v}, \alpha_j^\vee \rangle$}
\put(30.6000,-16.7000){\makebox(0,0){\tiny{$\langle \nu_{v}, \alpha_j^\vee \rangle$}}}%
% STR 2 0 3 0 Black White
% 4 3880 1390 3880 1490 5 0 0 0
% \tiny{$\langle \nu_{v+1}, \alpha_j^\vee \rangle$}
\put(38.8000,-14.9000){\makebox(0,0){\tiny{$\langle \nu_{v+1}, \alpha_j^\vee \rangle$}}}%
% LINE 2 2 3 0 Black White
% 4 5800 1800 1000 1800 1000 1200 5800 1200
%
\special{pn 8}%
\special{pa 5800 1800}%
\special{pa 1000 1800}%
\special{dt 0.045}%
\special{pa 1000 1200}%
\special{pa 5800 1200}%
\special{dt 0.045}%
% LINE 2 0 3 0 Black White
% 4 2800 1400 3600 1800 4400 1400 3600 1800
%
\special{pn 8}%
\special{pa 2800 1400}%
\special{pa 3600 1800}%
\special{fp}%
\special{pa 4400 1400}%
\special{pa 3600 1800}%
\special{fp}%
% LINE 2 0 3 0 Black White
% 2 2800 1400 2200 1800
%
\special{pn 8}%
\special{pa 2800 1400}%
\special{pa 2200 1800}%
\special{fp}%
% CIRCLE 2 0 3 0 Black White
% 4 4670 1800 4680 1800 4680 1800 4680 1800
%
\special{pn 8}%
\special{ar 4670 1800 10 10 0.0000000 6.2831853}%
% STR 2 0 3 0 Black White
% 4 4740 1620 4740 1720 5 0 0 0
% $t'$
\put(47.4000,-17.2000){\makebox(0,0){$t'$}}%
% CIRCLE 2 0 3 0 Black White
% 4 1600 800 1610 800 1610 800 1610 800
%
\special{pn 8}%
\special{ar 1600 800 10 10 0.0000000 6.2831853}%
% CIRCLE 2 0 3 0 Black White
% 4 2200 1800 2210 1800 2210 1800 2210 1800
%
\special{pn 8}%
\special{ar 2200 1800 10 10 0.0000000 6.2831853}%
% CIRCLE 2 0 3 0 Black White
% 4 2800 1400 2810 1400 2810 1400 2810 1400
%
\special{pn 8}%
\special{ar 2800 1400 10 10 0.0000000 6.2831853}%
% CIRCLE 2 0 3 0 Black White
% 4 3600 1800 3610 1800 3610 1800 3610 1800
%
\special{pn 8}%
\special{ar 3600 1800 10 10 0.0000000 6.2831853}%
% CIRCLE 2 0 3 0 Black White
% 4 4400 1400 4410 1400 4410 1400 4410 1400
%
\special{pn 8}%
\special{ar 4400 1400 10 10 0.0000000 6.2831853}%
% CIRCLE 2 0 3 0 Black White
% 4 5200 2600 5210 2600 5210 2600 5210 2600
%
\special{pn 8}%
\special{ar 5200 2600 10 10 0.0000000 6.2831853}%
\end{picture}}%
			% }
			\caption{\, }
			\label{fig.1}
	\end{figure}
	% since $0<a_{v+1}<1$, $\langle \nu_{v+2}, \alpha_j^\vee \rangle<0$,
	% and $H_j^\pi(\sigma_{v+2})\in\Z$ by \eqref{int};
	% see Figure \ref{fig.1}.
	Let $t_0$ and $t_1$ be as \eqref{ft_0} and \eqref{ft_1}, respectively;
	note that $t_0=\sigma_{s'}$ for some $0 \leq s'\leq u$.
	By Lemma \ref{lem.frag} and the definition of $t_0$,
	% We obtain $t_0 \neq \sigma_{v-3}, \sigma_{v-1}, \sigma_{v+1}$ by the definition of $t_0$ and Remark \ref{rem.alt}.
		% Since $\langle \nu_{v+2}, \alpha_j^\vee \rangle<0$ by \eqref{eq.wtp},
		% the function $H_j^\pi(t)$ does not attain its minimum value of $t=\sigma_{v+1}$.
		% Thus we obtain $t_0 \neq \sigma_{v+1}$
		% by the definition of $t_0$.
		% Similarly, we obtain $t_0 \neq \sigma_{v-3}, \sigma_{v-1}$.
	% Also, since $H_j^\pi(\sigma_{v-2})=H_j^\pi(\sigma_{v})=H_j^\pi(t')$
	% and $\sigma_{v-2}<\sigma_{v} <t'$,
	% we see that $t_0 \neq \sigma_{v-2}, \sigma_{v}$ by the definition of $t_0$.
	% Thus
	we obtain
	$t_0<\sigma_{v-3}$ or $\sigma_{v+2}\leq t_0$.
	Let $m_j^\pi$ be as \eqref{eq.min}.
	If $u=5$, then we obtain $v=3$ since $2=n<v<u-n+1=4$.
	Hence, we see that $m_j^\pi=H_j^\pi(\sigma_5)$, which contradicts the
	assumption that $\f_j\pi \neq \mathbf{0}$.
	Therefore we obtain $u\geq 6$.
	If $\sigma_{v+2}\leq t_0$, then it is obvious from the definition of $\f_j$
	that $(\f_j\pi)(t)=\pi(t)$
	for $t\in (\sigma_{v-3}, \sigma_{v+2} )$.
	If $t_0<\sigma_{v-3}$, then $H_j^\pi(\sigma_{v-2})>H_j^\pi(t_0)=m_j^\pi\in\Z$
	by the definition of $t_0$.
	Note that $H_j^\pi(\sigma_{v-2}) \in \Z$ by \eqref{int}, and hence
	$H_j^\pi(\sigma_{v-2})\geq H_j^\pi(t_0)+1= m_j^\pi+1$.
	Because $H_j^\pi(\sigma_{v-3})>H_j^\pi(\sigma_{v-2})\geq m_j^\pi+1$,
	we see that $t_1<\sigma_{v-3}$.
	Therefore we obtain $(\f_j\pi)(t)=\pi(t)- \alpha_j$
	for $t\in (\sigma_{v-3}, \sigma_{v+2} )$
	by the definition of  $\f_j$.

	Assume that $\e_j\pi \neq \mathbf{0}$;
	we show that $\e_j\pi$ satisfies the condition $C(m, 2)$ or $C(m-1, 2)$.
	Take $t_1$ and  $t_0$ as \eqref{et_1} and \eqref{et_0}, respectively;
	note that $t_1=\sigma_{s'}$ for some $0 \leq s' \leq u$.
	By the definition of $t_1$ and Lemma \ref{lem.frag},
	we obtain $t_1<\sigma_{v-3}$, $\sigma_{v+2}\leq t_1$, or $ t_1=\sigma_{v-2}$.
	% By the same argument as above,
	% we see that $t_1 \neq \sigma_{v-3}, \sigma_{v-1}, \sigma_{v+1}$.
	% Also, since
	% $H_j^\pi(\sigma_{v-2})=H_j^\pi(\sigma_{v})$
	% and $\sigma_{v-2}<\sigma_{v}$,
	% we see that $t_1 \neq \sigma_{v}$ by the definition of $t_1$.
If  $t_1<\sigma_{v-3}$, then it is obvious by the definition of $\e_j$
	that  $(\e_j\pi)(t)=\pi(t)+\alpha_j$
	for $t\in (\sigma_{v-3}, \sigma_{v+2} )$,
	and hence $\e_j\pi $ satisfies the condition $C(m, 2)$.
If $\sigma_{v+2}<t_1$, then $H_j^\pi(\sigma_{v+2})>H_j^\pi(t_1)=m_j^\pi\in\Z$
	by the definition of $t_1$.
	Note that $H_j^\pi(\sigma_{v+2})\in\Z$ by \eqref{int},
	and hence $H_j^\pi(\sigma_{v+2})\geq H_j^\pi(t_1)+1=m_j^\pi+1$.
	Because  $H_j^\pi(\sigma_{v+3})>H_j^\pi(\sigma_{v+2})\geq m_j^\pi+1$,
	we see that $\sigma_{v+3}<t_0$.
	Therefore we obtain $(\e_j\pi)(t)=\pi(t)$
	for $t\in (\sigma_{v-3}, \sigma_{v+2} )$, and hence $\e_j\pi $ satisfies the condition $C(m, 2)$.
		% If $t_1<\sigma_{v-2}$ or $\sigma_{v+2}<t_1$, then it is obvious by the definition of $\e_j$
		% that $\e_j\pi $ satisfies the condition $C(m, 2)$.
	Assume that  $ t_1=\sigma_{v+2}$.
	Since $H_j^\pi(\sigma_v)=H_j^\pi(t') > H_j^\pi(t_1) =m^\pi_j \in \Z$ by the definition of $t_1$,
	we see that $H_j^\pi(t')\geq  H_j^\pi(t_1)+1$.
		Hence, we obtain $t'\leq t_0$.
		Therefore, we have
	\begin{equation*}
		\e_j\pi=
		\begin{cases}
			(\nu_1, \ldots, \nu_u, x_{m-3}\lambda;
			\sigma_0, \ldots, \sigma_{v+1}, t_0, 1) &
			\text{if } v=u-2,\\
			(\nu_1, \ldots, \nu_u;
			\sigma_0, \ldots, \sigma_{v+1}, t_0, \sigma_{v+3}, \ldots, \sigma_u )&
			\text{if } v < u-2,
		\end{cases}
	\end{equation*}
	which satisfies the condition $C(m,2)$.
Assume that $t_1=\sigma_{v-2}$.
If $v=3$, then it is obvious that $\sigma_0=\sigma_{v-3}\leq t_0$;
note that $\sigma_{v-3}= t_0$ if and only if $H_j^\pi(\sigma_1)=-1$.
If $v>3$, then $H_j^\pi(\sigma_{v-4})>H_j^\pi(t_1)=m_j^\pi\in\Z$
	by the definition of $t_1$.
	Note that $H_j^\pi(\sigma_{v-4})\in\Z$ by \eqref{int},
	and hence $H_j^\pi(\sigma_{v-4})\geq H_j^\pi(t_1)+1=m_j^\pi+1$.
	Because  $H_j^\pi(\sigma_{v-3})>H_j^\pi(\sigma_{v-4})\geq m_j^\pi+1$,
	we see that $\sigma_{v-3}<t_0$.
Therefore we see that
\begin{equation*}
	\e_j\pi=
	\begin{cases}
		(\nu_2, \nu_3, \ldots, \nu_u;
		\sigma_0, \sigma_{2}, \sigma_{3}, \ldots, \sigma_{u}) &
		\text{if } v=3 \text{ and } H_j^\pi(\sigma_1)=-1,\\
		(\nu_1, \ldots, \nu_u;
		\sigma_0, \ldots, \sigma_{v-3}, t_0, \sigma_{v-1}, \ldots, \sigma_u )&
		\text{otherwise}.
	\end{cases}
\end{equation*}
Also,
	we see by $H_j^\pi(\sigma_{v-2})=H_j^\pi(t')$ and
	the definition of $t_1$ that $t'=\sigma_{v+2}$.
	Since  $H_j^\pi(\sigma_{v})=H_j^\pi(\sigma_{v+2})$, we have
	\begin{equation}\label{eq.aaaa}
			% \color{blue}{H_j^\pi(\sigma_{v+1})-H_j^\pi(\sigma_{v})}
			% \color{black}
			a_{v+1}
		=H_{{j}}^\pi(\sigma_{v+1})-H_{{j}}^\pi(\sigma_{v+2}).
	\end{equation}
	Here we can rewrite (\ref{eq.aaaa}) as
	\begin{equation}\label{eq.eqa}
		p_{m} \left( \dfrac{q_{m-1}}{p_{m-1}}-
		\dfrac{q_m}{p_{m}} \right)
		=p_{m-2} \left( \sigma_{v+2}-
		\dfrac{q_{m-1}}{p_{m-1}} \right).
	\end{equation}
	By \eqref{eq.pm}, \eqref{eq.pm2}, and \eqref{eq.qm}, we see that
	\begin{equation*}
		\sigma_{v+2}
		=\dfrac{1}{p_{m-2}}\left( \dfrac{(p_{m}+p_{m-2})q_{m-1}}{p_{m-1}}-q_m
		\right) =\dfrac{q_{m-2}}{p_{m-2}}.
	\end{equation*}
	Since $\sigma_{v+2} =q_{m-2}/p_{m-2}<1$ by Lemma \ref{lem.pq},
	we obtain $v+2<u$.
	Write $\e_j\pi$ as: $\e_j\pi=(\nu'_1, \ldots, \nu'_{u'}; \sigma'_0, \ldots, \sigma'_{u'})$.
	If $v=3$ and $H_j^\pi(\sigma_1)=-1$,
	then $u'=u-1, \nu'_s=\nu_{s+1}$ for $s=1, \ldots u'$,
	and $\sigma'_0=0$, $\sigma'_s=\sigma_{s+1}$ for $s=1, \ldots u'$.
	We set $v':=v$. Then we obtain
	$2<v' <u'-1$, $\nu'_{v'}=x_{m-1}\lambda$,
	and $\sigma'_{v'+s}=q_{(m-1)-s}/p_{(m-1)-s}$ for $s=-2, -1, 0, 1$.
	Hence we see that $\e_j\pi$ satisfies the condition $C(m-1, 2)$.
	If $v>3$, or $v=3$ and $H_j^\pi(\sigma_1)\neq -1$,
	then $u'=u, \nu'_s=\nu_{s}$ for $s=1, \ldots u'$,
	and $\sigma'_{v-2}=t_0$, $\sigma'_s=\sigma_{s}$ for $s=1, \ldots, v-3, v-1, \ldots, u'$.
	Hence, we see that $\e_j\pi$ satisfies the condition $C(m-1, 2)$ with $v':=v+1$.

	Similarly, we can show that if $\e_{j'}\pi\neq\mathbf{0}$ (resp., $\f_{j'}\pi\neq\mathbf{0}$), then it satisfies the condition $C(m, 2)$ (resp., $C(m, 2)$ or $C(m+1, 2)$).
	Thus we have proved Proposition \ref{thm.cmn}.
	\end{proof}
	Now, for each $n\in\Z_{\geq1}$, we define
	\begin{align*}
		\underline{\nu^{(n)}}:
		& \  x_n\lambda, \ldots ,  x_1\lambda, x_0\lambda, x_{-1}\lambda, \ldots ,x_{-n}\lambda,\\
		{\underline{\sigma^{(n)}}:}
		& \  { 0< \frac{q_{n}}{p_{n}} <\cdots< \frac{q_{1}}{p_{1}}< \frac{q_{0}}{p_{0}}<
		 \frac{q_{-1}}{p_{-1}}< \cdots< \frac{q_{-n+1}}{p_{-n+1}}<  1;}
	\end{align*}
	{the inequalities in $\underline{\sigma^{(n)}}$ follow from Lemma \ref{lem.pq}}.
	We see that
	$\pi^{(n)}:= (\underline{\nu^{(n)}}, \underline{\sigma^{(n)}})$ is
	an LS path of shape $\lambda$
	satisfying the condition $C(0, n)$.
	We denote by $\B(\lambda; \pi^{(n)})$ the connected component of $\B(\lambda)$
	containing $\pi^{(n)}$;
	note that an element of $\B(\lambda; \pi^{(n)})$ satisfies
	the condition $C(m, n)$ for some $m\in\Z$ by Proposition \ref{thm.cmn}.
	Hence we have the following corollary, which proves Proposition \ref{thm.main2}.

\begin{corollary}\label{cor.main2}
	If $n\neq n'$,
	then $\B(\lambda; \pi^{(n)})\cap \B(\lambda; \pi^{(n')})=\emptyset$.
	In particular,
	the crystal graph of $\B(\lambda)$
	has infinitely many connected
	components.
\end{corollary}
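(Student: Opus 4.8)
The plan is to exhibit the family $\{\pi^{(n)}\}_{n\geq 1}$ as representatives of pairwise distinct connected components; once this is done, the infinitude of connected components follows at once. The whole argument rests on Proposition \ref{thm.cmn}, which shows that the property of satisfying $C(\,\cdot\,, n)$ (for some value of its first argument) is stable under the Kashiwara operators $\e_i$ and $\f_i$. Concretely, I would first record the consequence already noted above: every element of the connected component $\B(\lambda; \pi^{(n)})$ satisfies $C(m, n)$ for some $m\in\Z$, because $\pi^{(n)}$ itself satisfies $C(0, n)$ and the integer $n$ is never altered when one applies $\e_i$ or $\f_i$ (only the center $m$ may shift by $\pm 1$). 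Thus $n$ serves as an invariant of the connected component containing $\pi^{(n)}$.

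The separating step is where the one genuine idea enters: I would extract from $C(m, n)$ a numerical quantity that grows with $n$. By definition, a path satisfying $C(m, n)$ must have at least $2n+1$ linear pieces, i.e.\ $u \geq 2n+1$. On the other hand, by its explicit construction $\pi^{(n)}=(\underline{\nu^{(n)}}, \underline{\sigma^{(n)}})$ has exactly $u = 2n+1$ pieces, since $\underline{\nu^{(n)}}$ runs over $x_n\lambda, \ldots, x_{-n}\lambda$. Now suppose, for a contradiction, that $\B(\lambda; \pi^{(n)})$ and $\B(\lambda; \pi^{(n')})$ meet for some $n < n'$. Since two connected components are either disjoint or equal, they would coincide, so $\pi^{(n)}$ would lie in $\B(\lambda; \pi^{(n')})$ and hence would satisfy $C(m, n')$ for some $m$. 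This would force $2n+1 = u \geq 2n'+1$, contradicting $n < n'$.

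Therefore $\B(\lambda; \pi^{(n)}) \cap \B(\lambda; \pi^{(n')}) = \emptyset$ whenever $n \neq n'$, and the existence of infinitely many connected components is immediate. I expect the main obstacle to lie not in this deduction---which is a short length-counting argument---but in having the right invariant at hand: essentially all the difficulty has been absorbed into Proposition \ref{thm.cmn}, whose proof guarantees that $\e_i$ and $\f_i$ never destroy the rigid middle fragment prescribed by $C(\,\cdot\,, n)$. The only subtlety remaining at this stage is to read off the length bound $u \geq 2n+1$ correctly from the definition of $C(m, n)$ and to match it against the exact length $2n+1$ of the chosen representative $\pi^{(n)}$.
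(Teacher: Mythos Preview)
Your proposal is correct and follows essentially the same approach as the paper: both arguments use Proposition~\ref{thm.cmn} to show that every path in $\B(\lambda;\pi^{(n)})$ satisfies $C(m,n)$ for some $m$, extract from this the length bound $u\geq 2n+1$, and derive a contradiction by comparing with the exact length $2n'+1$ of the representative $\pi^{(n')}$ (with the roles of $n$ and $n'$ swapped relative to your write-up).
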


\begin{proof}
	We may assume that $n'<n$.
	Suppose, for a contradiction, that $ \B(\lambda; \pi^{(n)})= \B(\lambda; \pi^{(n')})$.
	For $\pi=(\nu_1, \ldots, \nu_u; \sigma_0, \ldots, \sigma_u) \in \B(\lambda)$,
	we define $\ell(\pi):=u$.
	By Proposition \ref{thm.cmn}, we see that
	if $\pi\in \B(\lambda; \pi^{(n)})$, then $\ell (\pi) \geq 2n+1$.
	Hence, we have  $\ell (\pi^{(n')}) \geq 2n+1$ since $\pi^{(n')} \in \B(\lambda; \pi^{(n)})$.
	However,  we have   $\ell (\pi^{(n')})=2n'+1 < 2n+1$ by the definition of $\pi^{(n')}$
	{and $n'<n$,} which is a contradiction.
	Thus we have proved Corollary \ref{cor.main2}.
	\end{proof}
%%%

\subsection{Proof of Proposition \ref{thm.main3}.}\label{S.prf.main3}
Assume that
$\lambda=k\Lambda_1-l\Lambda_2\in P$ is
of the form either $(\mathrm{i})$ or $(\mathrm{ii})$ in Theorem \ref{thm.M},
and $k$ and $l$ are not relatively prime.
Let $d\geq 2$ be the greatest common divisor of $k$ and $l$.
We set $k':=k/d$, $l':=l/d$, and
 $\lambda':=(1/d)\lambda= k'\Lambda_1-l'\Lambda_2 \in P$.
Define the  sequence $\{ p_m\}_{m\in\Z}$ by
\eqref{eq.pm} and  \eqref{eq.pm2} for $\lambda'$:
\begin{equation}
	p_0=l', \quad
	p_1=k', \quad
	p_{m+2}=
	\begin{cases}
		bp_{m+1}-p_m & \text{if} \ m \ \text{is even}, \\
		ap_{m+1}-p_m & \text{if} \ m \ \text{is odd}. \\
	\end{cases}
\end{equation}
Then we see from Lemma \ref{lem.prime} (1)
that  $p_m$ and $p_{m+1}$ are relatively prime for all $m\in\Z$.
Note that
\begin{equation}\label{eq.2xm}
	x_m\lambda=
	\begin{cases}
		d p_{m+1}\Lambda_1-d p_m\Lambda_2 &
		\text{if}\ m \ \text{is even},\\
		-d p_m\Lambda_1+d p_{m+1}\Lambda_2 &
		\text{if}\ m \ \text{is odd},\\
	\end{cases}
\end{equation}
for $m \in \Z$.

We first show that the proof of
Proposition \ref{thm.main3} is reduced to the case that $d=2$,
i.e., $\lambda=2\lambda'$.
For this purpose, we begin by recalling the notion of a concatenation of LS paths
(in a general setting).
% Here, we recall the notion of a concatenation of LS paths
% (in a general setting).
Let $\mu \in P$ be an arbitrary integral weight,
and $m \in \Z_{\geq 1}$.
For $\pi_1, \pi_2, \ldots, \pi_m \in \B(\mu)$,
we define a concatenation
$\pi_1 \ast \pi_2 \ast \cdots \ast \pi_m$
of them by:
\begin{align*}
	(\pi_1 \ast \pi_2 \ast \cdots \ast \pi_m )(t)
	= \sum_{l=1}^{k-1}\pi_l(1) &+ \pi_k(mt-k+1)\\
	& \text{ for } \frac{k-1}{m}\leq t \leq \frac{k}{m},
	\ 1\leq k \leq m,
\end{align*}
and set
\begin{equation*}
	\B(\mu)^{\ast m}
	=\underbrace{\B(\mu) \ast \cdots \ast \B(\mu)}_{m \text{ times}}
	:=\{ \pi_1 \ast \cdots \ast \pi_m \mid \pi_k \in \B(\mu),
	\ 1\leq k \leq m  \}.
\end{equation*}
We endow $\B(\mu)^{\ast m}$ with a crystal structure as follows.
Let $\pi= \pi_1 \ast \cdots \ast \pi_m \in \B(\mu)^{\ast m}$.
First we define
$\mathrm{wt}(\pi) := \pi(1) = \pi_1(1)+ \cdots+ \pi_m(1)$;
notice that $\pi(1) \in P$ since $\pi_k(1) \in P$
for all $1 \leq k \leq m$.
Next, for $i \in I$, we define $\e_i \pi$ and $\f_i \pi$
in exactly the same way as
for elements in $\B(\mu)$ (see \S \ref{section.LSpath});
notice that the condition \eqref{int} holds for every element
in $\B(\mu)^{\ast m}$.
We deduce that if $\e_i \pi \neq \mathbf{0}$,
then $\e_i \pi
= \pi_1 \ast \cdots \ast \e_i \pi_k \ast \cdots \ast \pi_m$
for some $1 \leq k \leq m$;
the same holds also for $\f_i$.
Therefore the set $\B(\mu)^{\ast m} \cup \{\mathbf{0}\}$
is stable under the action of $\e_i$ and $\f_i$ for $i \in I$. Finally, for $i \in I$,
we set $ \varepsilon_i (\pi):= \mathrm{max} \{ n \geq 0 \mid \e_i ^n \pi \neq \mathbf{0} \}$ and
$ \varphi_i (\pi):= \mathrm{max} \{ n \geq 0 \mid \f_i ^n \pi \neq \mathbf{0} \}$.
 We know the following proposition from \cite[\S 2]{L}.
\begin{proposition}
	Let $\mu \in P$ be an arbitrary integral weight,
	and $m \in \Z_{\geq 1}$.
	The set $\B(\mu)^{\ast m}$ together with the maps
	$\mathrm{wt} :\B(\mu)^{\ast m} \rightarrow P $,
	$\e_i, \f_i  :\B(\mu)^{\ast m} \rightarrow \B(\mu)^{\ast m} \cup \{\mathbf{0}\} $, $i \in I$, and
	$ \varepsilon_i$, $\varphi_i :\B(\mu)^{\ast m} \rightarrow \Z_{\geq0}$, $i \in I$, is a crystal.
	Moreover, the map
	\begin{equation*}
		\B(\mu)^{\ast m} \rightarrow\B(\mu)^{\otimes m}, \
		\pi_1 \ast \cdots \ast \pi_m \mapsto
		\pi_1 \otimes \cdots \otimes \pi_m,
	\end{equation*}
	is an isomorphism of crystals.
\end{proposition}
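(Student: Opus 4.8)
The plan is to route everything through the single height function $H_i^\pi(t)=\langle \pi(t),\alpha_i^\vee\rangle$ and thereby reduce the concatenation to the single-path analysis of Littelmann recalled in \S\ref{section.LSpath}. The basic observation is that for $\pi=\pi_1\ast\cdots\ast\pi_m$ the restriction of $H_i^\pi$ to the $k$-th subinterval $[(k-1)/m,\,k/m]$ is a vertically shifted copy of $H_i^{\pi_k}$, with shift $\sum_{l<k}\langle \pi_l(1),\alpha_i^\vee\rangle=\sum_{l<k}H_i^{\pi_l}(1)$. Since each $\pi_l(1)\in P$, this shift is an integer; consequently every local minimal value of $H_i^\pi$ is again an integer, so condition \eqref{int} holds on $\B(\mu)^{\ast m}$. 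This is exactly what is required to apply the formulas of \S\ref{section.LSpath} for $\e_i$ and $\f_i$ verbatim to elements of $\B(\mu)^{\ast m}$.

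Next I would verify the crystal axioms. The surgery defining $\e_i$ (resp.\ $\f_i$) is performed only on the subinterval $[t_0,t_1]$ surrounding the first (resp.\ last) global minimum of $H_i^\pi$; by the observation above this subinterval lies inside a single factor $\pi_k$, so that $\e_i\pi=\pi_1\ast\cdots\ast\e_i\pi_k\ast\cdots\ast\pi_m$ (and likewise for $\f_i$) whenever the result is nonzero. Hence the weight shifts $\mathrm{wt}(\e_i\pi)=\mathrm{wt}(\pi)+\alpha_i$ and $\mathrm{wt}(\f_i\pi)=\mathrm{wt}(\pi)-\alpha_i$, the identity $\varphi_i(\pi)-\varepsilon_i(\pi)=\langle\mathrm{wt}(\pi),\alpha_i^\vee\rangle$, and the mutual invertibility of $\e_i$ and $\f_i$ all reduce factorwise to the already-known single-path statements of \S\ref{section.LSpath}.

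For the asserted isomorphism onto $\B(\mu)^{\otimes m}$, the map is manifestly a weight-preserving bijection (the weight of a concatenation is $\sum_k\pi_k(1)$, matching the tensor-product weight), so the only content is that it intertwines the root operators. Here I would run the signature computation: writing the minimum of $H_i^\pi$ over the $k$-th subinterval as $\sum_{l<k}\bigl(\varphi_i(\pi_l)-\varepsilon_i(\pi_l)\bigr)-\varepsilon_i(\pi_k)$ (using $m_i^{\pi_k}=-\varepsilon_i(\pi_k)$ and $H_i^{\pi_k}(1)=\varphi_i(\pi_k)-\varepsilon_i(\pi_k)$), one compares these quantities across $k$ and sees that the first (resp.\ last) global minimum falls in precisely the factor singled out by the bracketing rule for the tensor product. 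Consequently $\e_i$ (resp.\ $\f_i$) acts on the same tensor factor as it does in $\B(\mu)^{\otimes m}$, which yields the intertwining.

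The hard part will be this last step, namely the careful matching of the location of the global minimum of the concatenated height function with the factor chosen by the tensor-product rule, and in particular the tie cases in which the minimum is attained at a junction between two consecutive factors. I would settle these by invoking that $H_i^\pi$ first attains its minimum strictly inside the earliest factor whose subinterval-minimum is lowest (and symmetrically for the last minimum), so that the ``first/last minimum'' convention of \S\ref{section.LSpath} is reconciled with the standard tensor-product convention at the boundaries. Everything else is routine bookkeeping around the single-path theory.
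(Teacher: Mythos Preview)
The paper does not actually prove this proposition: it simply records it with the attribution ``We know the following proposition from \cite[\S 2]{L}.'' Your sketch is precisely the standard Littelmann argument underlying that citation---decompose the concatenated height function into integrally shifted copies of the factor height functions, use integrality at the junctions to confine $[t_0,t_1]$ to a single factor, and then match the location of the extremal minimum with the tensor-product (signature/bracketing) rule---so there is nothing to compare beyond noting that you have supplied what the paper outsources.
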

Let $\pi =(\nu_1, \nu_2, \ldots, \nu_u;
\sigma_0, \sigma_1, \ldots, \sigma_u ) \in \B(m\mu)$.
For each $1\leq k\leq m$,
let $s, s'$ be such that $\sigma_{s-1}\leq (k-1)/m< \sigma_{s}$ and
$\sigma_{s'-1}< k/m\leq \sigma_{s'}$, respectively.
We set
\begin{equation*}
	\pi_k:=
	\left(
	\frac{1}{m}\nu_s, \frac{1}{m}\nu_{s+1},
	\ldots, \frac{1}{m}\nu_{s'};
	0, m\sigma_s-k+1,
	m\sigma_{s+1}-k+1,
	\ldots, m\sigma_{s'-1}-k+1, 1
	 \right).
\end{equation*}
By the definition of LS paths and the assumption that
$\pi \in \B(m\mu)$,
we deduce that $\pi_k \in \B(\mu)$.
% Since $\pi \in \B(m\mu)$, we have $\pi_k \in \B(\mu)$
% for all $1\leq k\leq m$.
Moreover, it is easy to check that
$\pi =\pi_1 \ast \cdots \ast \pi_m$ since
\begin{equation*}
	\pi_k(t)=\pi \left( \frac{1}{m}t+ \frac{k-1}{m}  \right)
	-\pi \left(\frac{k-1}{m}  \right) \
	\text{for } t \in [0, 1].
\end{equation*}
Therefore, it follows  that $\B(m\mu)$ is contained in $\B(\mu)^{\ast m}$,
and hence is a subcrystal of
$\B(\mu)^{\ast m} \cong\B(\mu)^{\otimes m}$
consisting of the elements $\pi_1 \otimes \pi_2 \otimes \cdots \otimes \pi_m$
such that $\kappa(\pi_p) \geq \iota(\pi_{p+1})$ for all $1\leq p \leq m-1$.

% By Lemma \ref{lem.o1} and \ref{lem.o2},  it follows that $\B(\lambda)=\B(d\lambda') $ is isomorphic to the subcrystal
% $\mathbb{S}(\lambda)$ of $\B(\lambda')^{\otimes d}$
% consisting of the elements $\pi_1 \otimes \pi_2 \otimes \cdots   \otimes \pi_d $ such that
% $\kappa(\pi_p) \geq \iota(\pi_{p-1})$
% for all $1\leq p \leq d-1 $
% by using concatenation of LS paths.
% (cf. \cite[\S 1]{L}, \cite[\S 3.2]{NS2}, \cite[\S 4.1]{S-Yu}).

Now, we return to the proof of Proposition \ref{thm.main3}.
The map
\begin{equation*}
	\B(\lambda')^{\otimes d} \rightarrow  \B(\lambda')^{\otimes 2}, \
	\pi_1 \otimes \pi_2 \otimes \cdots  \otimes  \pi_d \mapsto\pi_1 \otimes \pi_2,
\end{equation*}
induces a surjective map $\Phi $ from
$\B(d\lambda') \subset \B(\lambda')^{\ast d}
\cong \B(\lambda')^{\otimes d}$
to $\B(2\lambda') \subset \B(\lambda')^{\ast 2}
\cong \B(\lambda')^{\otimes 2}$;
note that $\Phi $ is not necessarily a morphism of crystals.
It follows that the inverse image
$\Phi^{-1}(C)\subset \B(d\lambda')$
of a connected component $C$ of $\B(2\lambda')$
is a subcrystal of $\B(d\lambda')$. This shows that if
$\B(2\lambda')$ has infinitely many connected components,
then so does $\B(d\lambda')$.
Therefore our proof of Proposition \ref{thm.main3} is reduced to the case that $d=2$,
i.e., $\lambda=2\lambda'$.
In \cite{O}, it has been shown that the crystal graph of
$\B(2\Lambda_1-2\Lambda_2)$
has infinitely many connected components.
We can prove Proposition \ref{thm.main3} with $d=2$ in exactly the same way as it.
However, since \cite{O} is written in Japanese,
we write the proof %of Proposition \ref{prop.2lambda}
also here for completion.

\begin{lemma}\label{lem.o1}
	Let $m, n \in \Z $ be such that $m-n\geq 2$,
	and let $0<\sigma<1$ be a rational number.
	% and let  $\sigma \in \Q$ be such that $0<\sigma<1$.
	There exists a $\sigma$-chain for  $(x_{m}\lambda, x_{n}\lambda )$
	if and only if $\sigma=1/2$.
\end{lemma}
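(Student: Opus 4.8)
The plan is to use the fact that, by Proposition \ref{prop.Hasse}, the Hasse diagram of $W\lambda$ is a single linear chain, so that the underlying sequence of any $\sigma$-chain for $(x_m\lambda, x_n\lambda)$ is completely forced; the existence of a $\sigma$-chain then reduces to a single divisibility condition on $\sigma$, which I can resolve using the coprimality of consecutive $p_j$'s coming from $\lambda'$.

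First I would observe that $\mathrm{dist}(\mu_{k-1},\mu_k)=1$ means $\mu_{k-1}$ covers $\mu_k$ in the Hasse diagram, and that by Proposition \ref{prop.Hasse} the only covering relations are $x_p\lambda > x_{p-1}\lambda$. Hence any sequence $x_m\lambda=\mu_0>\mu_1>\cdots>\mu_u=x_n\lambda$ of covers must be $\mu_k=x_{m-k}\lambda$ with $u=m-n$; in particular the unique positive real root $\beta_k$ with $\mu_k=r_{\beta_k}(\mu_{k-1})$ is the simple root $\alpha_i$ prescribed in the proof of Proposition \ref{prop.Hasse} ($i=2$ if $m-k+1$ is even, $i=1$ if it is odd). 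By \eqref{eq.2xm} this gives $\langle \mu_{k-1},\beta_k^\vee\rangle=\langle x_{m-k+1}\lambda,\alpha_i^\vee\rangle=-2p_{m-k+1}$, so that the defining condition of a $\sigma$-chain becomes
\[
	2\sigma p_j \in \Z_{>0} \quad \text{for all } j=n+1,n+2,\ldots,m .
\]

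For the ``if'' direction I would simply note that $\sigma=1/2$ yields $2\sigma p_j=p_j\in\Z_{>0}$ for every $j$ (recall $p_j>0$), so that $x_m\lambda>x_{m-1}\lambda>\cdots>x_n\lambda$ is a genuine $1/2$-chain. For the ``only if'' direction, this is exactly where the hypothesis $m-n\geq 2$ enters: the index range $\{n+1,\ldots,m\}$ then contains the two consecutive integers $n+1$ and $n+2$, and by Lemma \ref{lem.prime}~(1) applied to $\lambda'$ (whose coordinates $k',l'$ are coprime) the integers $p_{n+1}$ and $p_{n+2}$ are relatively prime. Writing $\sigma=r/s$ in lowest terms, the conditions $2\sigma p_{n+1}\in\Z$ and $2\sigma p_{n+2}\in\Z$ force $s\mid 2p_{n+1}$ and $s\mid 2p_{n+2}$, hence $s\mid 2\gcd(p_{n+1},p_{n+2})=2$; since $0<\sigma<1$ rules out $s=1$, we obtain $s=2$ and therefore $\sigma=1/2$.

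I expect the only genuinely delicate point to be arguing cleanly that the linearity of the Hasse diagram forces the $\sigma$-chain to pass through every intermediate $x_{m-k}\lambda$, so that the constraint necessarily involves two consecutive—hence coprime—values $p_j$; once this is in place, the remainder is an elementary divisibility computation. The hypothesis $m-n\geq 2$ serves precisely to guarantee that at least two consecutive indices occur, which is what pins $\sigma$ down to $1/2$.
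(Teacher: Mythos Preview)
Your proposal is correct and follows essentially the same approach as the paper's proof: both use Proposition~\ref{prop.Hasse} to force the underlying sequence of any $\sigma$-chain to be $x_m\lambda>x_{m-1}\lambda>\cdots>x_n\lambda$, then invoke \eqref{eq.2xm} together with the coprimality of two consecutive $p_j$ (coming from Lemma~\ref{lem.prime}\,(1) applied to $\lambda'$) to conclude $\sigma=1/2$. The only cosmetic difference is that you pick the consecutive pair $p_{n+1},p_{n+2}$ whereas the paper picks $p_{m-1},p_m$; either choice works.
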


\begin{proof}
	Assume that there exists a $\sigma$-chain
	$ x_m\lambda=\nu_0> \nu_1> \cdots> \nu_u=x_n\lambda$
	 for  $(x_{m}\lambda, x_{n}\lambda )$.
	We see from  Proposition \ref{prop.Hasse} that
	$\nu_v=x_{m-v}\lambda$ for $v=0, 1, \ldots, m-n$.
	We set
	\begin{equation*}
		i:=
		\begin{cases}
			2 & \text{if} \ m \ \text{is even}, \  \\
			1  & \text{if} \ m \ \text{is odd}, \
		\end{cases}
		\quad
		j:=
		\begin{cases}
			1 & \text{if} \ m \ \text{is even}, \  \\
			2  & \text{if} \ m \ \text{is odd}. \
		\end{cases}
	\end{equation*}
	By (\ref{eq.2xm}), it follows that
	$\sigma \langle x_{m}\lambda, \alpha_i^\vee \rangle= -2\sigma p_{m} $ and
	$\sigma \langle x_{m-1}\lambda, \alpha_j^\vee \rangle= -2\sigma p_{m-1} $.
	By the definition of a $\sigma$-chain and the assumption that $m-n\geq 2$,
	both $-2\sigma p_m $ and  $-2\sigma p_{m-1} $ are integers.
	Since $p_m$ and $p_{m-1}$ are relatively prime,
	we obtain $\sigma =1/2$.

	Conversely, we assume that $\sigma =1/2$.
	By Proposition \ref{prop.Hasse} and (\ref{eq.2xm}), it is obvious that
	the sequence
	$x_{m}\lambda, x_{m-1}\lambda, \ldots, x_{n+1}\lambda, x_{n}\lambda$
	becomes a $\sigma$-chain for $(x_m\lambda, x_n\lambda)$.
\end{proof}

\begin{lemma}\label{lem.o2}
	Let $\pi$ be an LS path of shape $\lambda$.
	It holds that $\pi(1/2)\in P$.
\end{lemma}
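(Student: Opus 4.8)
The plan is to avoid analyzing the breakpoints of $\pi$ directly and instead to exploit the concatenation decomposition $\B(\lambda)=\B(2\lambda')\subset \B(\lambda')^{\ast 2}\cong \B(\lambda')^{\otimes 2}$ established above, where $\lambda'=k'\Lambda_1-l'\Lambda_2\in P$ in the reduced case $\lambda=2\lambda'$. Given $\pi\in\B(2\lambda')$, I would write $\pi=\pi_1\ast\pi_2$ with $\pi_1,\pi_2\in\B(\lambda')$, using the explicit splitting
\[
\pi_k(t)=\pi\!\left(\tfrac{1}{2}t+\tfrac{k-1}{2}\right)-\pi\!\left(\tfrac{k-1}{2}\right),\qquad t\in[0,1],
\]
which was already shown to land in $\B(\lambda')$.

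The key step is then to evaluate at $t=1/2$, the midpoint $1/m$ for $m=2$. Taking $k=1$ in the splitting (and using $\pi(0)=0$) gives $\pi_1(t)=\pi(t/2)$, so that
\[
\pi(1/2)=\pi_1(1)=\mathrm{wt}(\pi_1).
\]
Since $\pi_1$ is a genuine LS path of shape $\lambda'$, we know from \cite[Lemma 4.5 (a)]{L} that $\mathrm{wt}(\pi_1)=\pi_1(1)\in P$, and the lemma follows at once.

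I expect essentially no obstacle along this route: the entire argument reduces to the bookkeeping of matching the half-point $t=1/2$ to the endpoint of the first concatenation factor. The temptation would be to argue directly from the turning-point form $\pi=(x_{c_1}\lambda,\ldots,x_{c_u}\lambda;\sigma_0,\ldots,\sigma_u)$: every weight $x_m\lambda$ lies in $2P$ by \eqref{eq.2xm}, and Lemma \ref{lem.o1} pins any jump $c_k-c_{k+1}\geq 2$ to the single value $\sigma_k=1/2$. The difficulty with that approach --- and the reason I would not take it --- is that a generic breakpoint value $\pi(\sigma_j)$ need not belong to $P$, so integrality cannot be read off from the directions $\mu_i\in 2P$ and the rationality of the $\sigma_i$ alone; one would be pushed into a case analysis on whether $1/2$ is itself a breakpoint and on where the (at most one) jump sits. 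The identity $\pi(1/2)=\mathrm{wt}(\pi_1)$ circumvents all of this by reinterpreting the half-point value as an actual weight of an LS path of shape $\lambda'$.
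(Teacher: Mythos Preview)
Your proof is correct and takes a genuinely different route from the paper's. The paper argues directly: writing
\[
\pi\!\left(\tfrac12\right)=\sum_{v=1}^{s-1}\sigma_v(\nu_v-\nu_{v+1})+\tfrac12\,\nu_s
\]
via summation by parts, it observes that each $\sigma_v(\nu_v-\nu_{v+1})\in P$ by the defining property of a $\sigma_v$-chain (each step contributes an integer multiple of a root; cf.\ \cite[\S4]{L}), and that $\tfrac12\nu_s\in P$ since every $x_m\lambda\in 2P$ by \eqref{eq.2xm}. Your approach instead pulls the statement back through the already-established embedding $\B(2\lambda')\subset\B(\lambda')^{\ast 2}$, identifying $\pi(1/2)$ with $\mathrm{wt}(\pi_1)$ for the first factor $\pi_1\in\B(\lambda')$. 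Both arguments are short; yours is more conceptual and immediately generalizes to $\pi(k/m)\in P$ for any $\pi\in\B(m\mu)$, while the paper's is self-contained and does not rely on the concatenation machinery. One small correction to your commentary: the direct approach does \emph{not} require the case analysis you anticipate --- the Abel-summation rewriting above handles uniformly the question of whether $1/2$ is a breakpoint, so your stated reason for avoiding it is not quite on target.
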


\begin{proof}
	Let $\pi=(\nu_1,  \ldots, \nu_{s-1}, \nu_{s}, \ldots,  \nu_u;
	\sigma_0,  \ldots, \sigma_{s-1}, \sigma_{s}, \ldots, \sigma_u) \in \B(\lambda)$,
	and assume that  $\sigma_{s-1}<1/2\leq \sigma_{s}$.
	Then we compute
	\begin{align*}
		\pi\left(\dfrac{1}{2} \right) &=\sum_{v=1}^{s-1}(\sigma_{v}- \sigma_{v-1})\nu_v
		+\left(\dfrac{1}{2}-\sigma_{s-1}\right) \nu_s \\
		&= \sum_{v=1}^{s-1}\sigma_{v}(\nu_v-\nu_{v+1})+\dfrac{1}{2}\nu_s.
	\end{align*}
	Since there exists a $\sigma_v$-chain for $(\nu_v, \nu_{v+1})$ for each $1\leq v\leq s-1$,
	we see that $\sum_{v=1}^{s-1}\sigma_{v}(\nu_v-\nu_{v+1}) \in P$
	(cf. \cite[\S 4]{L}).
	Moreover, (\ref{eq.2xm}) implies $(1/2)\nu_s \in P$.
	Thus we obtain $\pi(1/2)\in P$, as desired.
\end{proof}

% We set $\lambda:=(2/d)\lambda=2\lambda'$.
For each $\nu \in W\lambda $,
there exists unique $m\in \Z$ such that $\nu=x_m\lambda$. Then
we define  $z(\nu):=m$.
For $r\in \Z_{\geq0}$, we define a subset $\B_r(\lambda)$ of $\B(\lambda)$
as follows: If $r=0$, then we set
\begin{equation*}
	\B_0(\lambda):=\{ (\nu_1, \ldots, \nu_u;
	\sigma_0, \ldots, \sigma_u) \in \B(\lambda) \mid
	z(\nu_v)-z(\nu_{v+1})=1 \text{ for } v=1, \ldots, u-1 \}.
\end{equation*}
If $r\geq1$, then we set
\begin{equation*}
	\B_r(\lambda):=\{\pi= (\nu_1,  \ldots, \nu_u;
	\sigma_0, \ldots, \sigma_u) \in \B(\lambda) \mid
	\pi	\text{ satisfies the condition  } (\mathrm{I}) \text{ or }  (\mathrm{\II}) \},
\end{equation*}
where
\begin{align*}
	(\mathrm{I}): & \text{ There exists } 1\leq s \leq u-1
	\text{ such that  } z(\nu_s)-z(\nu_{s+1}) =2r; \\
	(\mathrm{\II}): &  \text{ There exists } 1\leq s \leq u-1
	\text{ such that  } z(\nu_s)-z(\nu_{s+1}) =2r+1.
\end{align*}
\begin{remark}\label{rem.o}
	Let $(\nu_1, \ldots, \nu_u;
	\sigma_0,  \ldots, \sigma_u) \in \B(\lambda) $.
	Assume that  there exists $ 1\leq s \leq u-1$
	such that  $z(\nu_s)-z(\nu_{s+1}) \geq2$.
	Then, we see by Lemma \ref{lem.o1} that $\sigma_s=1/2$ and
	$z(\nu_v)-z(\nu_{v+1}) =1$ for each $v=1, 2, \ldots, s-1, s+1, \ldots, u-1$.
	Hence, we obtain $\B(\lambda)=\bigsqcup_{r\in\Z_{\geq0}}\B_r(\lambda)$.
\end{remark}
Proposition \ref{thm.main3} is
a corollary of the following theorem.
% Proposition \ref{thm.main3} follows from Proposition \ref{prop.deduce} and the following theorem.

% Remark \ref{rem.o} and the following theorem gives
% $\B(\lambda)$ has infinitely many connected components.

\begin{theorem}\label{thm.Om}
	Let $r\in \Z_{\geq 0}$. Let $\pi \in \B_r(\lambda)$, and  $i \in I$.
	If $\e_i \pi \neq \mathbf{0}$, then $\e_i \pi \in \B_r(\lambda)$.
	If $\f_i \pi \neq \mathbf{0}$, then $\f_i \pi \in \B_r(\lambda)$.
	Therefore, $\B_r(\lambda)$ is a subcrystal of $\B(\lambda)$ for each $r\in\Z_{\geq0}$.
\end{theorem}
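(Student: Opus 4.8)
The plan is to show that each $\e_i$ and $\f_i$ preserves the integer $r$ that records the size of the unique long jump of a path; the subcrystal claim is then immediate. By Remark \ref{rem.o}, every $\pi=(\nu_1,\ldots,\nu_u;\sigma_0,\ldots,\sigma_u)\in\B(\lambda)$ has at most one index $s$ with $z(\nu_s)-z(\nu_{s+1})\ge 2$, and such a jump can occur only at $\sigma_s=1/2$. Writing $J(\pi)$ for the difference $z(\nu_s)-z(\nu_{s+1})$ of the $z$-values of the two directions straddling $t=1/2$ (with $J(\pi)\in\{0,1\}$ when there is no long jump), one reads off from the definitions and Remark \ref{rem.o} that $\pi\in\B_r(\lambda)$ if and only if $\lfloor J(\pi)/2\rfloor=r$. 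Hence it suffices to prove that $J$ can change only within one of the pairs $\{2r,2r+1\}$, i.e. that $\lfloor J/2\rfloor$ is invariant.

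First I would record the effect of the operators on the directions. On the tail the map $\e_i$ (resp. $\f_i$) only translates by $+\alpha_i$ (resp. $-\alpha_i$), so the $z$-values of all tail directions are unchanged; on the reflected interval $[t_0,t_1]$ each direction $x_n\lambda$ is replaced by $r_ix_n\lambda=x_{n-1}\lambda$ for $\e_i$ (the reflected segments carry negative $\alpha_i$-slope) and by $x_{n+1}\lambda$ for $\f_i$ (positive slope), using Proposition \ref{prop.Hasse} and \eqref{eq.2xm}. Thus $J$ can change only through the segment(s) adjacent to $t=1/2$, and only according to which of these lies in $[t_0,t_1]$. I treat $\e_i$ in detail; the case of $\f_i$ is entirely symmetric, reflecting an increasing stretch and translating by $-\alpha_i$.

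The decisive simplification comes from Lemma \ref{lem.o2}: since $\pi(1/2)\in P$ we have $H_i^\pi(1/2)\in\Z$, whereas $H_i^\pi$ decreases strictly from the integer $m_i^\pi+1$ to the integer $m_i^\pi$ on the open interval $(t_0,t_1)$ and hence takes only non-integer values there. Therefore $t=1/2$ cannot lie in $(t_0,t_1)$, and only the positions $1/2<t_0$, $1/2>t_1$, $1/2=t_1$, $1/2=t_0$ occur. In the first two the jump lies in the unchanged head or the translated tail, so $J$ is unchanged. If $1/2=t_1$, then $H_i^\pi(1/2)=m_i^\pi$ is a first global minimum, so the segment just after $1/2$ has positive slope and the segment just before has negative slope; the two straddling directions then have opposite parity, $J$ is odd, and reflecting only the segment before $1/2$ sends $J$ to $J-1$, staying in the pair $\{2r-1\}\cup\{2r\}$ with $\lfloor J/2\rfloor$ fixed. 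If $1/2=t_0$, then $H_i^\pi(1/2)=m_i^\pi+1$, only the segment after $1/2$ is reflected, and $J$ is sent to $J+1$; the only way to leave the pair would be for the segment before $1/2$ to have positive slope (making $J$ odd), which I rule out below.

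The hard part is exactly this last exclusion, and it is where the minimum structure is used. Suppose, with $1/2=t_0$, that the segment ending at $1/2$ had positive $\alpha_i$-slope, so that $t=1/2$ is a local maximum of $H_i^\pi$ with $H_i^\pi(1/2)=m_i^\pi+1$, while $m_i^\pi$ is first attained at some $t_1>1/2$. Then $H_i^\pi>m_i^\pi$ on $[0,1/2]$; since $H_i^\pi(0)=0\ge m_i^\pi+1$ (because $m_i^\pi\le -1$) and every local minimum of $H_i^\pi$ is an integer by \eqref{int}, the minimum of $H_i^\pi$ over $[0,1/2]$ equals $m_i^\pi+1$, attained at $t=1/2$. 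But $H_i^\pi$ is strictly increasing as $t\uparrow 1/2$, so $H_i^\pi(1/2-\varepsilon)<m_i^\pi+1$ for small $\varepsilon>0$, contradicting that $m_i^\pi+1$ is this minimum. Hence the segment before $1/2$ has negative slope, the jump is even, and $J\mapsto J+1$ stays within $\{2r,2r+1\}$. A routine check (using that $H_i^\pi$ attains $m_i^\pi$ exactly at the breakpoint $\sigma_{s+1}$, resp. $m_i^\pi+1$ at $\sigma_{s-1}$) confirms that the reflected segment joins the adjacent tail, resp. head, segment smoothly, so that $\e_i\pi$ genuinely has the asserted jump size. Combining all cases gives $\lfloor J(\e_i\pi)/2\rfloor=\lfloor J(\pi)/2\rfloor$, and symmetrically for $\f_i$, proving that each $\B_r(\lambda)$ is stable under $\e_i,\f_i$ and is therefore a subcrystal of $\B(\lambda)$.
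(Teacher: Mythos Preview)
Your argument is correct and is organized rather differently from the paper's. Both proofs rest on the same two ingredients, Remark \ref{rem.o} (any jump of size $\ge 2$ sits at $\sigma_s=1/2$) and Lemma \ref{lem.o2} ($\pi(1/2)\in P$), but you exploit them more directly: from $H_i^\pi(1/2)\in\Z$ you immediately get $1/2\notin(t_0,t_1)$, and then a single four-way split on the position of $1/2$ relative to $[t_0,t_1]$ replaces the paper's separate treatment of even jump versus odd jump and of $i=j$ versus $i=j'$. Your min/max contradiction ruling out ``$t_0=1/2$ with the incoming segment of positive slope'' is the cleanest step, and it lets you handle $r=0$ and $r\ge 1$ uniformly, whereas the paper deduces Case~3 ($r=0$) only a posteriori by contradiction from Cases~1 and~2. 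In exchange, the paper's version is more explicit: it pins down exactly where $t_0,t_1$ sit among $\sigma_{s-1},\sigma_s,\sigma_{s+1}$ and writes out $\e_i\pi$, so one sees by inspection that no spurious large jump is created away from $1/2$. Your ``routine check'' covers the same ground but is terse; the parenthetical hint about $H_i^\pi$ attaining $m_i^\pi+1$ at $\sigma_{s-1}$ is a bit off (in fact one shows $H_i^\pi(\sigma_{s-1})>m_i^\pi+1$ when $s>1$, forcing $t_0>\sigma_{s-1}$), and you are implicitly using Remark \ref{rem.o} applied to $\e_i\pi\in\B(\lambda)$ to know that any large jump in the output must again sit at $1/2$. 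Spelling out those two points would make the argument airtight, but the strategy is sound.
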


\begin{proof}
	The proof  is divided into three cases.

	$\bf{Case \ 1}$: Assume that $r\geq 1$, and $\pi=(\nu_1,  \ldots, \nu_u;
	\sigma_0,  \ldots, \sigma_u) \in \B_r(\lambda)$ satisfies the condition (I).
	We set $z(\nu_{s})=m$, $z(\nu_{s+1})=n$; note that $m-n=2r$. We set
	\begin{equation*}
		j:=
		\begin{cases}
			2 & \text{if} \ m \ \text{is even}, \  \\
			1  & \text{if} \ m \ \text{is odd}, \
		\end{cases}
		\quad
		{j'}:=
		\begin{cases}
			1 & \text{if} \ m \ \text{is even}, \  \\
			2  & \text{if} \ m \ \text{is odd}. \
		\end{cases}
	\end{equation*}
	Then we see by \eqref{eq.2xm} that
	$\langle \nu_s, \alpha_j^\vee \rangle <0$,
	$\langle \nu_{s}, \alpha_{j'}^\vee \rangle >0$.
	Moreover, we see that
	$\langle \nu_{s+1}, \alpha_j^\vee \rangle <0$,
	$\langle \nu_{s+1}, \alpha_{j'}^\vee \rangle >0$
	because $m-n \in 2\Z$.

First,
	let us show that $\e_j\pi\in \B_r(\lambda)$ if $\e_j\pi \neq \mathbf{0}$.
	Take  $t_1$ and $t_0$ as (\ref{et_1}) and (\ref{et_0}), respectively
	(with $i$ replaced by $j$);
	note that $t_1=\sigma_v$ for some $0\leq v\leq u$.
	Since $\langle \nu_s, \alpha_j^\vee \rangle <0$
	and $\langle \nu_{s+1}, \alpha_j^\vee \rangle <0$ as seen above,
	the function $H_j^\pi(t)$ does not attain its minimum value at $t=\sigma_{s-1}, \sigma_{s}$
	(see the left figure in Figure \ref{fig.2}).
	Thus
	we obtain $t_1 \neq \sigma_{s-1}, \sigma_s$.
	If $t_1<\sigma_{s-1}$, then the assertion is obvious by
	the definition of $\e_j$ and Remark \ref{rem.o}.
	Assume that  $t_1 \geq  \sigma_{s+1}$.
	Since $H_j^\pi(\sigma_s)>  H_j^\pi(t_1)= m_j^\pi \in \Z $ by the definition of $t_1$,
	and since  $H_j^\pi(\sigma_s)=  H_j^\pi(1/2) \in \Z $ by Lemma \ref{lem.o2} and Remark \ref{rem.o},
	we see that $H_j^\pi(\sigma_s) \geq H_j^\pi(t_1)+1$.
	Therefore, we have $t_0 \geq \sigma_s$ by the definition of $t_0$.
	If $t_0> \sigma_s$, then it is obvious
	by the definition of $\e_j$ and Remark \ref{rem.o} that $\e_j\pi\in \B_r(\lambda)$.
	If $t_0= \sigma_s$, then we deduce by the definition of $\e_j$ that
	$\e_j\pi$ is of the form
	\begin{equation*}
		\e_j\pi =
		\begin{cases}
			(\nu_1,  \ldots, \nu_s, r_j\nu_{s+1};
			\sigma_0,  \ldots,  \sigma_s, \sigma_u)
			& \text{ if } s = u-1 \text{ or } u-2, \\
			(\nu_1,  \ldots, \nu_s, r_j\nu_{s+1}, \ldots,  \nu_u;
		\sigma_0,  \ldots, \sigma_{s}, \sigma_{s+2},  \ldots, \sigma_u)
			& \text{ if } s \leq u-3.
		\end{cases}
	\end{equation*}
	Since $r_j\nu_{s+1}=r_j x_{n}\lambda =x_{n-1}\lambda$ and $m-(n-1) =2r+1$, we obtain
	$\e_j\pi\in \B_r(\lambda)$.

Next, let us show that $\e_{j'}\pi\in \B_r(\lambda)$ if $\e_{j'}\pi \neq \mathbf{0}$.
Take  $t_1$ and $t_0$ as (\ref{et_1}) and (\ref{et_0}), respectively
(with $i$ replaced by $j'$).
Since $\langle \nu_s, \alpha_{j'}^\vee \rangle >0$
and $\langle \nu_{s+1}, \alpha_{j'}^\vee \rangle >0$ as seen above,
the function  $H_{j'}^\pi(t)$ does not attain its minimum value at $t=\sigma_s, \sigma_{s+1}$
(see the right figure in Figure \ref{fig.2}).
Thus
	we obtain $t_1\neq \sigma_{s}, \sigma_{s+1}$.
If $t_1\leq \sigma_{s-1}$,
then the assertion is obvious
by the definition of $\e_{j'}$ and Remark \ref{rem.o}.
	If $t_1\geq \sigma_{s+2}$, then
	$H_{j'}^\pi(\sigma_{s-1}) >  H_{j'}^\pi(t_1)= m_{j'}^\pi \in \Z$ by the definition of $t_1$.
	Notice that $H_{j'}^\pi(\sigma_{s-1}) \in \Z$ by \eqref{int},
	and hence $H_{j'}^\pi(\sigma_{s-1}) \geq  H_{j'}^\pi(t_1)+1= m_{j'}^\pi +1$.
	Because $H_{j'}^\pi(\sigma_{s+1}) >  H_{j'}^\pi(\sigma_{s-1}) \geq m_{j'}^\pi +1$,
	we see that
	% there exists
	% $\sigma_{s+1}<t'<t_1$ such that $H_{j'}^\pi(t') >  m_{j'}^\pi +1$.
	% Thus we obtain
	$\sigma_{s+1}<t_0$.
	Therefore, $\e_{j'}\pi\in \B_r(\lambda)$
	by the definition of $\e_{j'}$ and Remark \ref{rem.o}.

	Similarly, we can show (in Case 1) that if $\f_i \pi \neq \mathbf{0}$ for $i \in I$,
	then $\f_i \pi \in \B_r(\lambda)$
	\begin{figure}
		\centering
		% \begin{center}
			%WinTpicVersion4.32a
{\unitlength 0.1in%
\begin{picture}(55.5000,16.7000)(2.5000,-18.0000)%
% LINE 2 0 3 0 Black White  
% 4 1195 600 1995 800 1995 800 2595 1600
% 
\special{pn 8}%
\special{pa 1195 600}%
\special{pa 1995 800}%
\special{fp}%
\special{pa 1995 800}%
\special{pa 2595 1600}%
\special{fp}%
% LINE 2 0 3 0 Black White  
% 4 4000 1395 4800 795 4800 795 5400 595
% 
\special{pn 8}%
\special{pa 4000 1395}%
\special{pa 4800 795}%
\special{fp}%
\special{pa 4800 795}%
\special{pa 5400 595}%
\special{fp}%
% ELLIPSE 1 0 3 0 Black White  
% 4 1995 800 2005 810 2005 810 2005 810
% 
\special{pn 13}%
\special{ar 1995 800 10 10 0.0000000 6.2831853}%
% ELLIPSE 1 0 3 0 Black White  
% 4 4800 795 4810 805 4810 805 4810 805
% 
\special{pn 13}%
\special{ar 4800 795 10 10 0.0000000 6.2831853}%
% STR 2 0 3 0 Black White  
% 4 595 100 595 200 5 0 0 0
% $H_j^\pi(t)$
\put(5.9500,-2.0000){\makebox(0,0){$H_j^\pi(t)$}}%
% STR 2 0 3 0 Black White  
% 4 3400 95 3400 195 5 0 0 0
% $H_{j'}^\pi(t)$
\put(34.0000,-1.9500){\makebox(0,0){$H_{j'}^\pi(t)$}}%
% STR 2 0 3 0 Black White  
% 4 2045 590 2045 690 5 0 0 0
% $t=\sigma_{s}$
\put(20.4500,-6.9000){\makebox(0,0){$t=\sigma_{s}$}}%
% STR 2 0 3 0 Black White  
% 4 1295 390 1295 490 5 0 0 0
% $t=\sigma_{s-1}$
\put(12.9500,-4.9000){\makebox(0,0){$t=\sigma_{s-1}$}}%
% STR 2 0 3 0 Black White  
% 4 2685 1590 2685 1690 5 0 0 0
% $t=\sigma_{s+1}$
\put(26.8500,-16.9000){\makebox(0,0){$t=\sigma_{s+1}$}}%
% STR 2 0 3 0 Black White  
% 4 5560 385 5560 485 5 0 0 0
% $t=\sigma_{s+1}$
\put(55.6000,-4.8500){\makebox(0,0){$t=\sigma_{s+1}$}}%
% STR 2 0 3 0 Black White  
% 4 4780 555 4780 655 5 0 0 0
% $t=\sigma_{s}$
\put(47.8000,-6.5500){\makebox(0,0){$t=\sigma_{s}$}}%
% STR 2 0 3 0 Black White  
% 4 4120 1385 4120 1485 5 0 0 0
% $t=\sigma_{s-1}$
\put(41.2000,-14.8500){\makebox(0,0){$t=\sigma_{s-1}$}}%
% LINE 2 0 3 0 Black White  
% 4 1195 600 995 700 2595 1600 2795 1400
% 
\special{pn 8}%
\special{pa 1195 600}%
\special{pa 995 700}%
\special{fp}%
\special{pa 2595 1600}%
\special{pa 2795 1400}%
\special{fp}%
% LINE 2 2 3 0 Black White  
% 4 2795 1400 2895 1300 995 700 795 800
% 
\special{pn 8}%
\special{pa 2795 1400}%
\special{pa 2895 1300}%
\special{dt 0.045}%
\special{pa 995 700}%
\special{pa 795 800}%
\special{dt 0.045}%
% LINE 2 0 3 0 Black White  
% 4 5400 595 5600 795 4000 1395 3800 1295
% 
\special{pn 8}%
\special{pa 5400 595}%
\special{pa 5600 795}%
\special{fp}%
\special{pa 4000 1395}%
\special{pa 3800 1295}%
\special{fp}%
% LINE 2 2 3 0 Black White  
% 4 3800 1295 3600 1195 5600 795 5800 995
% 
\special{pn 8}%
\special{pa 3800 1295}%
\special{pa 3600 1195}%
\special{dt 0.045}%
\special{pa 5600 795}%
\special{pa 5800 995}%
\special{dt 0.045}%
% CIRCLE 2 2 1 0 Black Black  
% 4 5400 595 5410 595 5410 595 5410 595
% 
\special{sh 0.300}%
\special{ia 5400 595 10 10 0.0000000 6.2831853}%
\special{pn 8}%
\special{pn 8}%
\special{pa 5410 595}%
\special{pa 5405 604}%
\special{fp}%
\special{pa 5410 595}%
\special{pa 5410 595}%
\special{fp}%
% CIRCLE 2 2 1 0 Black White  
% 4 4000 1395 4010 1395 4010 1395 4010 1395
% 
\special{sh 0.300}%
\special{ia 4000 1395 10 10 0.0000000 6.2831853}%
\special{pn 8}%
\special{pn 8}%
\special{pa 4010 1395}%
\special{pa 4005 1404}%
\special{fp}%
\special{pa 4010 1395}%
\special{pa 4010 1395}%
\special{fp}%
% CIRCLE 2 2 3 0 Black White  
% 4 2595 1600 2605 1600 2605 1600 2605 1600
% 
\special{pn 8}%
\special{pn 8}%
\special{pa 2605 1600}%
\special{pa 2600 1609}%
\special{fp}%
\special{pa 2605 1600}%
\special{pa 2605 1600}%
\special{fp}%
% CIRCLE 2 2 3 0 Black White  
% 4 1195 600 1205 600 1205 600 1205 600
% 
\special{pn 8}%
\special{pn 8}%
\special{pa 1205 600}%
\special{pa 1200 609}%
\special{fp}%
\special{pa 1205 600}%
\special{pa 1205 600}%
\special{fp}%
% VECTOR 1 0 3 0 Black White  
% 2 595 1800 595 300
% 
\special{pn 13}%
\special{pa 595 1800}%
\special{pa 595 300}%
\special{fp}%
\special{sh 1}%
\special{pa 595 300}%
\special{pa 575 367}%
\special{pa 595 353}%
\special{pa 615 367}%
\special{pa 595 300}%
\special{fp}%
% VECTOR 1 0 3 0 Black White  
% 2 3395 1800 3395 300
% 
\special{pn 13}%
\special{pa 3395 1800}%
\special{pa 3395 300}%
\special{fp}%
\special{sh 1}%
\special{pa 3395 300}%
\special{pa 3375 367}%
\special{pa 3395 353}%
\special{pa 3415 367}%
\special{pa 3395 300}%
\special{fp}%
% STR 2 0 3 0 Black White  
% 4 1455 690 1455 790 5 0 0 0
% \tiny{$\langle \nu_{s}, \alpha_j^\vee \rangle$}
\put(14.5500,-7.9000){\makebox(0,0){\tiny{$\langle \nu_{s}, \alpha_j^\vee \rangle$}}}%
% STR 2 0 3 0 Black White  
% 4 2495 1000 2495 1100 5 0 0 0
% \tiny{$\langle \nu_{s+1}, \alpha_j^\vee \rangle$}
\put(24.9500,-11.0000){\makebox(0,0){\tiny{$\langle \nu_{s+1}, \alpha_j^\vee \rangle$}}}%
% STR 2 0 3 0 Black White  
% 4 4230 905 4230 1005 5 0 0 0
% \tiny{$\langle \nu_{s}, \alpha_{j'}^\vee \rangle$}
\put(42.3000,-10.0500){\makebox(0,0){\tiny{$\langle \nu_{s}, \alpha_{j'}^\vee \rangle$}}}%
% STR 2 0 3 0 Black White  
% 4 5160 715 5160 815 5 0 0 0
% \tiny{$\langle \nu_{s+1}, \alpha_{j'}^\vee \rangle$}
\put(51.6000,-8.1500){\makebox(0,0){\tiny{$\langle \nu_{s+1}, \alpha_{j'}^\vee \rangle$}}}%
% CIRCLE 2 0 3 0 Black White  
% 4 1195 600 1205 600 1205 600 1205 600
% 
\special{pn 8}%
\special{ar 1195 600 10 10 0.0000000 6.2831853}%
% CIRCLE 2 0 1 0 Black White  
% 4 2595 1600 2605 1600 2605 1600 2605 1600
% 
\special{sh 0.300}%
\special{ia 2595 1600 10 10 0.0000000 6.2831853}%
\special{pn 8}%
\special{ar 2595 1600 10 10 0.0000000 6.2831853}%
% CIRCLE 2 0 3 0 Black White  
% 4 4000 1395 4010 1395 4010 1395 4010 1395
% 
\special{pn 8}%
\special{ar 4000 1395 10 10 0.0000000 6.2831853}%
% CIRCLE 2 0 3 0 Black White  
% 4 5400 595 5410 595 5410 595 5410 595
% 
\special{pn 8}%
\special{ar 5400 595 10 10 0.0000000 6.2831853}%
\end{picture}}%
			\caption{\, }
			\label{fig.2}
		% \end{center}
	\end{figure}

	$\bf{Case \ 2}$: Assume that  $r\geq 1$, and $\pi=(\nu_1,  \ldots, \nu_u;
	\sigma_0,  \ldots, \sigma_u) \in \B_r(\lambda)$ satisfies the condition (\II).
	We set $z(\nu_{s})=m$, $z(\nu_{s+1})=n$; note that $m-n=2r+1$.
	We set
	\begin{equation*}
		j:=
		\begin{cases}
			2 & \text{if} \ m \ \text{is even}, \  \\
			1  & \text{if} \ m \ \text{is odd}, \
		\end{cases}
		\quad
		{j'}:=
		\begin{cases}
			1 & \text{if} \ m \ \text{is even}, \  \\
			2  & \text{if} \ m \ \text{is odd}. \
		\end{cases}
	\end{equation*}
	Then we see by \eqref{eq.2xm} that
	$\langle \nu_s, \alpha_j^\vee \rangle <0$,
	$\langle \nu_{s}, \alpha_{j'}^\vee \rangle >0$.
	Moreover, we see that
	$\langle \nu_{s+1}, \alpha_j^\vee \rangle >0$,
	$\langle \nu_{s+1}, \alpha_{j'}^\vee \rangle <0$
	because $m-n \in 2\Z+1$.

First, let us show that $\e_{j}\pi\in \B_r(\lambda)$ if $\e_{j}\pi \neq \mathbf{0}$.
	% Take  $t_1$, $t_0$, and $m_j^\pi$  as (\ref{et_1}), (\ref{et_0}), and \eqref{eq.min}  respectively
	Take  $t_1$ and $t_0$ as (\ref{et_1}) and  (\ref{et_0}), respectively
	(with $i$ replaced by $j$).
	Since $\langle \nu_s, \alpha_j^\vee \rangle <0$
	and $\langle \nu_{s+1}, \alpha_j^\vee \rangle >0$ as seen above,
	the function $H_j^\pi(t)$ does not attain its minimum value at $t=\sigma_{s-1}, \sigma_{s+1}$
	(see the left figure in Figure \ref{fig.3}).
	Thus we obtain $t_1 \neq \sigma_{s-1}, \sigma_{s+1}$.
If $t_1<\sigma_{s-1}$, then the assertion is obvious by
the definition of $\e_j$ and Remark \ref{rem.o}.
		If $t_1> \sigma_{s+1}$, then
		$H_{j}^\pi(\sigma_{s}) >  H_{j}^\pi(t_1)= m_{j}^\pi \in \Z$ by the definition of $t_1$.
		Notice that $H_{j}^\pi(\sigma_{s}) \in \Z$ by \eqref{int},
		and hence $H_{j}^\pi(\sigma_{s}) \geq  H_{j}^\pi(t_1)+1= m_{j}^\pi +1$.
		Because $H_{j}^\pi(\sigma_{s+1}) >  H_{j}^\pi(\sigma_s) \geq m_{j}^\pi +1$,
		we see that
		% there exists
		% $\sigma_{s+1}<t'<t_1$ such that $H_{j}^\pi(t') >  m_{j}^\pi +1$.
		% Thus we obtain
		$\sigma_{s+1}<t_0$.
		Therefore, $\e_{j}\pi\in \B_r(\lambda)$
		by the definition of $\e_{j}$ and Remark \ref{rem.o}.
	Assume that  $t_1= \sigma_s$.
	If $s=1$, i.e., $\sigma_{s-1}=0$, then it is obvious that $t_0\geq \sigma_{s-1}$.
	If $s>1 $, then we see that $H^\pi_j(\sigma_{s-2})>H^\pi_j(t_1) \in \Z$  by the definition of $t_1$.
	Notice that $H^\pi_j(\sigma_{s-2})\in \Z$ by \eqref{int}
	and hence $H_{j}^\pi(\sigma_{s-2}) \geq  H_{j}^\pi(t_1)+1= m_{j}^\pi +1$.
	Since $H_{j}^\pi(\sigma_{s-1}) >  H_{j}^\pi(\sigma_{s-2})\geq  m_{j}^\pi +1$
	by $\langle \nu_{s-1}, \alpha_j^\vee \rangle >0$ (see \eqref{eq.2xm} and Remark \ref{rem.o}),
	we see that
	% there exists
	% $\sigma_{s-1}<t'<t_1$ such that $H_{j}^\pi(t') >  m_{j}^\pi +1$.
	% Thus we obtain
	$\sigma_{s-1}< t_0$
	by the definition of $t_0$.
	Then we deduce by the definition of $\e_j$ that
	$\e_j\pi$ is of the form
	\begin{equation*}
		\e_j\pi =
		\begin{cases}
			(r_j\nu_{s}, \nu_{s+1}, \ldots,  \nu_u;
			\sigma_0,   \ldots,  \sigma_u)
			& \text{if }  t_0=\sigma_{s-1},\\
			(\nu_1, \ldots,  \nu_{s}, r_j\nu_{s}, \nu_{s+1}, \ldots, \nu_u;
			\sigma_0,  \ldots, \sigma_{s-1}, t_0, \sigma_{s+1}, \ldots  \sigma_u)
			& \text{if } t_0 > \sigma_{s-1}.
		\end{cases}
	\end{equation*}
	Since $r_j\nu_{s}=r_j x_{m}\lambda =x_{m-1}\lambda$ and $(m-1)-n =2r$, we obtain
	$\e_j\pi\in \B_r(\lambda)$.

Next, let us show that $\e_{j'}\pi\in \B_r(\lambda)$ if $\e_{j'}\pi \neq \mathbf{0}$.
		Take  $t_1$ and $t_0$ as (\ref{et_1}) and (\ref{et_0}), respectively
		(with $i$ replaced by $j'$).
		Since $\langle \nu_s, \alpha_{j'}^\vee \rangle >0$
		and $\langle \nu_{s+1}, \alpha_{j'}^\vee \rangle <0$ as seen above,
		the function  $H_{j'}^\pi(t)$ does not attain its minimum value at $t=\sigma_{s}$
			(see the right figure in Figure \ref{fig.3}).
		Thus
		we see that $t_1\neq \sigma_{s}$.
If $t_1 \leq \sigma_{s-1}$, the assertion is obvious by
the definition of $\e_{j'}$ and Remark \ref{rem.o}.
	If $\sigma_{s+1}\leq t_1$, then
			$H_{j'}^\pi(\sigma_{s-1}) >  H_{j'}^\pi(t_1)= m_{j'}^\pi \in \Z$ by the definition of $t_1$.
			Notice that $H_{j'}^\pi(\sigma_{s-1}) \in \Z$ by \eqref{int},
			and hence $H_{j'}^\pi(\sigma_{s-1}) \geq  H_{j'}^\pi(t_1)+1= m_{j'}^\pi +1$.
			Because $H_{j'}^\pi(\sigma_{s}) >  H_{j'}^\pi(\sigma_{s-1}) \geq m_{j'}^\pi +1$,
			we see that
			% there exists
			% $\sigma_{s}< t'<t_1$ such that $H_{j'}^\pi(t') >  m_{j'}^\pi +1$.
			% Thus we obtain
			$\sigma_{s} < t_0$.
Then
	$\e_{j'}\pi\in \B_r(\lambda)$
	by the definition of $\e_{j'}$ and Remark \ref{rem.o}.
	\begin{figure}
		% \centering
			%WinTpicVersion4.32a
{\unitlength 0.1in%
\begin{picture}(60.5500,18.6500)(1.4000,-19.9500)%
% STR 2 0 3 0 Black White  
% 4 595 95 595 195 5 0 0 0
% $H_j^\pi(t)$
\put(5.9500,-1.9500){\makebox(0,0){$H_j^\pi(t)$}}%
% STR 2 0 3 0 Black White  
% 4 3795 95 3795 195 5 0 0 0
% $H_{j'}^\pi(t)$
\put(37.9500,-1.9500){\makebox(0,0){$H_{j'}^\pi(t)$}}%
% STR 2 0 3 0 Black White  
% 4 1795 1595 1795 1695 5 0 0 0
% $t=\sigma_{s}$
\put(17.9500,-16.9500){\makebox(0,0){$t=\sigma_{s}$}}%
% STR 2 0 3 0 Black White  
% 4 1195 985 1195 1085 5 0 0 0
% $t=\sigma_{s-1}$
\put(11.9500,-10.8500){\makebox(0,0){$t=\sigma_{s-1}$}}%
% STR 2 0 3 0 Black White  
% 4 2595 385 2595 485 5 0 0 0
% $t=\sigma_{s+1}$
\put(25.9500,-4.8500){\makebox(0,0){$t=\sigma_{s+1}$}}%
% STR 2 0 3 0 Black White  
% 4 5795 1705 5795 1805 5 0 0 0
% $t=\sigma_{s+1}$
\put(57.9500,-18.0500){\makebox(0,0){$t=\sigma_{s+1}$}}%
% STR 2 0 3 0 Black White  
% 4 4995 385 4995 485 5 0 0 0
% $t=\sigma_{s}$
\put(49.9500,-4.8500){\makebox(0,0){$t=\sigma_{s}$}}%
% STR 2 0 3 0 Black White  
% 4 4395 1185 4395 1285 5 0 0 0
% $t=\sigma_{s-1}$
\put(43.9500,-12.8500){\makebox(0,0){$t=\sigma_{s-1}$}}%
% LINE 2 0 3 0 Black White  
% 8 1195 1195 1795 1595 1795 1595 2595 595 4395 1195 4995 595 4995 595 5795 1695
% 
\special{pn 8}%
\special{pa 1195 1195}%
\special{pa 1795 1595}%
\special{fp}%
\special{pa 1795 1595}%
\special{pa 2595 595}%
\special{fp}%
\special{pa 4395 1195}%
\special{pa 4995 595}%
\special{fp}%
\special{pa 4995 595}%
\special{pa 5795 1695}%
\special{fp}%
% ELLIPSE 2 0 1 0 Black Black  
% 4 1795 1595 1805 1605 1805 1605 1805 1605
% 
\special{sh 0.300}%
\special{ia 1795 1595 10 10 0.0000000 6.2831853}%
\special{pn 8}%
\special{ar 1795 1595 10 10 0.0000000 6.2831853}%
% ELLIPSE 2 0 1 0 Black White  
% 4 4995 595 5005 605 5005 605 5005 605
% 
\special{sh 0.300}%
\special{ia 4995 595 10 10 0.0000000 6.2831853}%
\special{pn 8}%
\special{ar 4995 595 10 10 0.0000000 6.2831853}%
% LINE 2 0 3 0 Black White  
% 8 5795 1695 5995 1495 4395 1195 4195 995 2595 595 2795 795 1195 1195 1095 1395
% 
\special{pn 8}%
\special{pa 5795 1695}%
\special{pa 5995 1495}%
\special{fp}%
\special{pa 4395 1195}%
\special{pa 4195 995}%
\special{fp}%
\special{pa 2595 595}%
\special{pa 2795 795}%
\special{fp}%
\special{pa 1195 1195}%
\special{pa 1095 1395}%
\special{fp}%
% LINE 2 2 3 0 Black White  
% 8 1095 1395 995 1595 2795 795 2995 995 4195 995 3995 795 5995 1495 6195 1295
% 
\special{pn 8}%
\special{pa 1095 1395}%
\special{pa 995 1595}%
\special{dt 0.045}%
\special{pa 2795 795}%
\special{pa 2995 995}%
\special{dt 0.045}%
\special{pa 4195 995}%
\special{pa 3995 795}%
\special{dt 0.045}%
\special{pa 5995 1495}%
\special{pa 6195 1295}%
\special{dt 0.045}%
% STR 2 0 3 0 Black White  
% 4 1345 1395 1345 1495 5 0 0 0
% \tiny{$\langle \nu_{s}, \alpha_j^\vee \rangle$}
\put(13.4500,-14.9500){\makebox(0,0){\tiny{$\langle \nu_{s}, \alpha_j^\vee \rangle$}}}%
% STR 2 0 3 0 Black White  
% 4 2425 1095 2425 1195 5 0 0 0
% \tiny{$\langle \nu_{s+1}, \alpha_j^\vee \rangle$}
\put(24.2500,-11.9500){\makebox(0,0){\tiny{$\langle \nu_{s+1}, \alpha_j^\vee \rangle$}}}%
% STR 2 0 3 0 Black White  
% 4 5625 895 5625 995 5 0 0 0
% \tiny{$\langle \nu_{s+1}, \alpha_{j'}^\vee \rangle$}
\put(56.2500,-9.9500){\makebox(0,0){\tiny{$\langle \nu_{s+1}, \alpha_{j'}^\vee \rangle$}}}%
% STR 2 0 3 0 Black White  
% 4 4525 695 4525 795 5 0 0 0
% \tiny{$\langle \nu_{s}, \alpha_{j'}^\vee \rangle$}
\put(45.2500,-7.9500){\makebox(0,0){\tiny{$\langle \nu_{s}, \alpha_{j'}^\vee \rangle$}}}%
% CIRCLE 2 0 3 0 Black White  
% 4 1195 1195 1205 1195 1205 1195 1205 1195
% 
\special{pn 8}%
\special{ar 1195 1195 10 10 0.0000000 6.2831853}%
% CIRCLE 2 0 3 0 Black White  
% 4 2595 595 2605 595 2605 595 2605 595
% 
\special{pn 8}%
\special{ar 2595 595 10 10 0.0000000 6.2831853}%
% CIRCLE 2 0 3 0 Black White  
% 4 4395 1195 4405 1195 4405 1195 4405 1195
% 
\special{pn 8}%
\special{ar 4395 1195 10 10 0.0000000 6.2831853}%
% CIRCLE 2 0 3 0 Black White  
% 4 5795 1695 5805 1695 5805 1695 5805 1695
% 
\special{pn 8}%
\special{ar 5795 1695 10 10 0.0000000 6.2831853}%
% VECTOR 1 0 3 0 Black White  
% 4 3795 1995 3795 295 595 1995 595 295
% 
\special{pn 13}%
\special{pa 3795 1995}%
\special{pa 3795 295}%
\special{fp}%
\special{sh 1}%
\special{pa 3795 295}%
\special{pa 3775 362}%
\special{pa 3795 348}%
\special{pa 3815 362}%
\special{pa 3795 295}%
\special{fp}%
\special{pa 595 1995}%
\special{pa 595 295}%
\special{fp}%
\special{sh 1}%
\special{pa 595 295}%
\special{pa 575 362}%
\special{pa 595 348}%
\special{pa 615 362}%
\special{pa 595 295}%
\special{fp}%
\end{picture}}%
			\caption{\, }
			\label{fig.3}
	\end{figure}

	Similarly, we can show (in Case 2) that
		if $\f_i \pi \neq \mathbf{0}$ for $i \in I$, then $\f_i \pi \in \B_r(\lambda)$.

	$\bf{Case \ 3}$: Let $\pi \in \B_0(\lambda)$.
	Suppose, for a contradiction, that there exist
	$\pi \in \B_0(\lambda) $  and $\tilde{g} \in \{ \e_i, \f_i \mid i\in I \}$
	such that
	$\tilde{g} \pi \neq\mathbf{0}$, and $\tilde{g} \pi \notin \B_0(\lambda)$;
	we see from Remark \ref{rem.o} that
	$\pi_1:= \tilde{g} \pi \in \B_r(\lambda)$ for some   $r \geq 1$.
	Define $\tilde{h} $ by
	\begin{equation*}
		\tilde{h}:=
		\begin{cases}
			\f_i & \text{if} \ \tilde{g}= \e_i, \\
			\e_i & \text{if} \ \tilde{g}= \f_i. \\
		\end{cases}
	\end{equation*}
Then we have $\pi = \tilde{h} \pi_1$.
Because we have shown in Cases 1 and 2 that
Theorem \ref{thm.Om} is true for $r\geq 1$,
we obtain $\pi \in \B_r(\lambda)$,
which contradicts the assumption that
$\pi \in \B_0(\lambda)$.

Thus we have proved Theorem \ref{thm.Om}.
\end{proof}

%
%
% \appendix
%
% \section{In the case of $a=1$ or $b=1$}
%
\section*{Acknowledgments.}
The author would like to thank Daisuke Sagaki, who is his supervisor, for his
kind support and advice.
% helpful comments and suggestions.
Also, he is grateful to the referee for valuable comments and suggestions.

\end{document}